\numberwithin{equation}{section}
\newtheorem{thm}{Theorem}[section]
\newtheorem{theorem*}{Theorem}
\newtheorem{theorem}[thm]{Theorem}
\newtheorem{remark}[thm]{Remark}
\newtheorem{definition}[thm]{Definition}
\newenvironment{proof}{{\bf Proof.}}{\hfill$\square$\vskip.5cm}
\title{
 %Curvature 
 Differential Geometry Perspective of Shape Coherence and Curvature Evolution by Finite-Time Nonhyperbolic Splitting}
\author[]{Tian Ma\thanks{Email: \href{mailto:mat@clarkson.edu}{mat@clarkson.edu}} } 
 \author[]{Erik M. Bollt\thanks{Corresponding author. Email: \href{mailto:bolltem@clarkson.edu}{bolltem@clarkson.edu}}}
\affil[]{Department of Mathematics and Computer Science, Clarkson University, USA. \\  Accepted to SIADS}
\begin{document}

\date{\today}

\maketitle
%%%%%%%%%%%%%%%%%%%%%%%%%%%%%%%%%%%%%%%%%%%%%%%%%
\begin{abstract}
%Mixing and coherence are fundamental issues at the heart of understanding transport in fluid dynamics and other non-autonomous dynamical systems.
Mixing and coherence are essential topics for understanding and describing transport in fluid dynamics and other non-autonomous dynamical systems.
%Recently, the notion of coherence has come to a more rigorous footing, and particularly within the recent advances of  finite-time studies of nonautonomous dynamical systems.
Only recently, the idea of coherence has become to a more serious footing, and particularly within the recent advances of  finite-time studies of nonautonomous dynamical systems.
Here we define {\it shape coherent sets} as a means to emphasize the intuitive notion of ensembles which ``hold together" for some period of time, and we contrast this notion to other recent perspectives of coherence, notably ``coherent pairs", and likewise also to the geodesic theory of material lines.
%We will show a connections between this concept and other recent concepts of coherences,  notably coherent pairs \cite{**}, and theory of geodesics \cite{**}, using methods from differential geometry to measure curvature evolution.
We will relate shape coherence to the differential geometry concept of curve congruence through matching curvatures.   We show that points in phase space where there is a zero-splitting between stable and unstable foliations locally correspond to points where curvature will evolve only slowly in time.
% differentiable dynamical dynamical systems by considering evolution of curvature through the Frenet-Serret formulae and curve congruence.   Then we will argue that tangency of finite time stable foliations (related to a forward time perspective) and finite time unstable foliations (related to a ``backwards time" perspective) serve a central role in coherence through slow evolution of curvature.  This perspective is agreeable with the recent theory of geodesics by Haller et.~al.~derived from a variational principle that coherence sets should be bounded by minimally stretching boundary curves, from which follows that curves defined by solutions along the unstable foliations.  
%We will show that 
%our perspective 
%will allow construction of curves which are everywhere tangency points of the stable and unstable foliations 
Then we develop curves of points with zero-angle, meaning non-hyperbolic splitting, by continuation methods in terms of the implicit function theorem.  From this follows a simple  ODE description of the boundaries of shape coherent sets.  We will illustrate our method with popular benchmark examples, and further investigate the intricate structure of foliations geometry.

\end{abstract}

%\pacs{Valid PACS appear here}% PACS, the Physics and Astronomy
                             % Classification Scheme.
Keywords:  {\it Shape coherent set, curvature evolution, finite-time stable and unstable foliations, implicit function theorem, continuation, mixing, transport.}%Use showkeys class option if keyword
                              %display desired

%%%%%%%%%%%%%%%%%%%%%%%%%%%%%%%%%%%%%%%%%%%%%%%%%%%%%%%%%%%%%%%%%%%%%%%%%%%%%%%%%
%                                                                                                                                   1 Introduction
%%%%%%%%%%%%%%%%%%%%%%%%%%%%%%%%%%%%%%%%%%%%%%%%%%%%%%%%%%%%%%%%%%%%%%%%%%%%%%%%%

\section{Introduction}
Finite time mixing and transport mechanisms in two-dimensional fluid flows have been a classic and long standing problem area in dynamical systems. 
We are interested here especially in coherent stuctures in the flow.
For the autonomous case, there are several study on almost invariant sets.  See \cite{DJ, FK,BN}, to name a few.
For the nonautonomous case, 
Lagrangian Coherent Structures (LCS) based on Finite Time Lyapunov Exponents (FTLE) that focus on maximal local stretching has become a popular computational way to study  transport, see  \cite{H1, H2}, but with caveats regarding the possibility of false positives have been revealed \cite{Haller-variationalFTLE}; similarly Finite Sized Lyapunov Exponents (FSLE) show differences and likely false positives, \cite{FSLE}.   On the other hand, transfer operator methods based on Galerkin-Ulam matrices which evolve distributions of ensembles of initial conditions have also been successfully developed for finding so-defined coherent pairs, \cite{FH, FK, FSM}.
There are also some recent works on improving, comparing and generalizing these methods, \cite{TR, MB}.  An exciting recent development involves 
minimally stretching material lines by studying the Cauchy-Green strain tensor \cite{HB}, in a so-called theory of geodesic curves.    In \cite{FH}, the authors developed a theory of geodesic curves whereby they define a principle that coherent sets should correspond to boundaries that are minimally stretching.  Our definition here, of shape coherent sets is agreeable with this geodesic curve principle by Farazmand-Haller, but from a different perspective, since as we will discuss that evolution of curvature is also related to evolution of arc length, whereas they discuss evolution of length of boundary curves.

However, popularly  general notions of coherence broadly interpreted may include allowing for sets that may stretch and fold when considering material curves of set boundaries, and ensembles of set of points therein that advect with the flow.  We remind that if highlighted by coloring (partition) then general sets automatically catch the eye, especially when remembering the simple notion that dynamical systems being continuous will preserve properties such as connectedness.  From this perspective, almost any connected set that is highlighted may appear coherent in the sense of continuity.  We take a narrower view here.

Here we introduce a different but related mathematical definition of coherence by defining  {\it shape coherent sets} which we so named to distinguish it from other possible notions of the phrase coherence.   We will show our intuitively motivated notion of shape coherence directly addresses the idea of sets evolving in such a way so as to ``hold together."  Our perspective is strictly Lagrangian, and is meant to capture the typical idea that a coherent set should ``catch your eye," as often described roughly when suggesting coherence.   We mathematically define here this concept in terms of stating that {\it shape coherence} should correspond (approximately at least) to a region of phase space where over the finite time epoch of consideration, the flow restricted to that region is equivalent to a member of the group of rigid body motions. 

Analysis of our definition of shape coherence from a perspective of differential geometry reveals  that sets which have boundaries that have relatively slowly changing curvature of their boundary curves correspond to shape coherent sets.   From this observation we relate that curves with slowly evolving curvature are related to curves with a zero-splitting property, meaning a shear like scenario that when stable and unstable foliations are parallel, and a curve of such points exists, then these curves evolve curvature relatively slowly.  This geometric interpretation by curvature evolution is agreeable  with the theory of  geodesic curves derived through variational principles in \cite{HB}, from which it follows that such curves follow as solutions of the ODE whose vector field is defined by unstable foliations; the connection to coherence is intuitive that  minimally stretching boundary curves suggest slowly changing shapes.  

We  highlight \cite{Schuck} where an observation in discussing 
a ``Lyapunov diffusivity," as they proposed   a quantity that depends on  particle dispersion in the forward time Lyapunov exponent (future) correction
in terms of angle $\alpha$ between 
 stable and unstable Lyapunov vectors,  $\tilde{\lambda}=\lambda \sin^2 \alpha$; from this they note that when $\alpha=0$ then shear would dominate and they note that shear is consistent with a transport barrier. 
 There have been studies of evolution of curvature in the literature, \cite{Liu,Drummond1,Drummond2,Pope,Ishihara}, focused on strongly chaotic regions or turbulent regions.  In the more recent studies of Thiffeault \cite{Thifault1, Thifault2, Thifault3} relating a distribution of curvature growth rate found long tails and corresponding scaling law describing the folding points, and a small range describing the stretching regions, but with a relatively flat intermediate scale suggestive of ellipticity.  The key difference in these previous works, and the appearance of curvature here, is that the past work was focused on the role of curvature in the description of the chaotic set, whereas we show that curvature can also be used to describe the complement, indicative as an equivalence to shape-coherent sets.  See also \cite{Ouellette1,Ouellette2,Ouellette3} for study of curvature of streamlines as descriptive of the topological aspects of experimental turbulent flows.
 
Since it is well known in differential geometry \cite{Carmo} that curvature directly defines curve congruence, here we link directly to our definition of a shape coherent sets through the Frenet-Serret formulae of curvature \cite{Carmo, Frenet, Serret}, and the so-called Frenet frame.  We present a constructive analysis based on the implicit function theorem that such zero-splitting curves may be shown to exist and they may be numerically constructed by adaptations of standard continuation methods.  We give two examples with the popular benchmarks of transport in the Rossby wave system and the double gyre.  We also make some geometric observations that angles between stable and unstable foliations develop zeros with increasing time epoch in a manner that relates to the prominence of the coherent sets, and also related to the accumulation of stable and unstable manifolds progressively revealing Cantor-like structure.

%%%%%%%%%%%%%%%%%%%%%%%%%%%%%%%%%%%%%%%%%%%%%%%%%%%%%%%%%%%%%%%%%%%%%%%%%%%%%%%%%%%%%%%%%%%%%%%%%%%%
%Shape Coherence
%%%%%%%%%%%%%%%%%%%%%%%%%%%%%%%%%%%%%%%%%%%%%%%%%%%%%%%%%%%%%%%%%%%%%%%%%%%%%%%%%%%%%%%%%%%%%%%%%%%%
\section{Shape Coherent Sets}

Here we will describe our perspective of coherence in terms of  our definition of  ``Shape Coherence."  We will compare and contrast this definition to the popular coherent pairs definition \cite{FSM} based on measurable dynamics and transfer operators, and also to the geodesic curve theory \cite{FH}, from which we will realize that there are both coincidences in the outcomes, but efficiencies to be gained in a different perspective leading to methods that are computationally very different but toward the same basic goal.  From a differential geometry perspective, we will draw strong links between shape coherence to evolution of curvature of the boundaries of shape coherent sets.

The goal in developing a definition of coherence in a nonautonomous dynamical system is to find sets that ``hold together," in some sense of that phrase requiring a good mathematical definition.
Suppose, a dynamical system,
\begin{equation}\label{vectorfield}
\dot{z}=G(z,t),
\end{equation}
and for a vector field $G:M\times R \rightarrow R$, for an open subset $M\subset R^2$ and sufficient regularity such that there is a flow, $\Phi(z,t;\tau):M\times R \times R$.
 In an autonomous dynamical system, ``hold together" lead  to almost-invariant sets \cite{FK} which describes weak transitivity \cite{F}.  That is, $A$ is almost invariant if $\Phi(A,0;0)\approx \Phi(A,0;T)$, describes almost invariance, for a time epoch, $t=0:T$.  The  idea of almost invariance breaks down in the nonautonomous setting; to demand that the image of a set must mostly overlap itself in finite time is too strict since sets generally move in time.    If we wish to say that two sets $A_t$ and $A_{t+\tau}$ are {\it coherent pairs,} \cite{FSM}, and reviewed in \cite{BN} then in measure $\mu$ we may demand that,
\begin{equation}
\rho_\mu(A_t,A_{t+\tau}):=\frac{\mu(A_t\cap \Phi(A_{t+\tau},t+\tau;\tau))}{\mu(A_t)}\geq \rho_0
\end{equation}
for some fraction $\rho_0\approx 1$.  Further, in \cite{FSM} they demand that $\mu(A_t)=\mu(A_{t+\tau})$.  But since, by this (not yet completely described here) definition with conditions as stated so far here, any set is a coherent pair to its own image, {\it no matter how contorted}.  In \cite{FSM} they further add a third condition to the definition of coherent pairs that $\mu(A_t)$ and $\mu(A_{t+\tau})$ must be ``robust" to small perturbations, while not explicitly part of the computation.  This third condition is covered automatically, in some sense, by the computational diffusive effect \cite{FK1} of the discretized grid used to approximate transfer operators in the Ulam-Galerkin based method \cite{BN}.  By notation here, we will take $A_t$ to be the evolution of an initial set $A$ to time $t$ under the flow, so $A_{t_0}\equiv A$, and for simplicity, we write $\Phi_t(z):=\Phi(z,t_0,t)$ with $t_0=0$.

Alternatively by our perspective we will appeal  directly to the geometry of the idea that a coherent set should ``hold together" to capture the intuition that such sets should ``capture our eyes" when viewing the flow:
%   Taking this Lagrangian perspective, we offer the following mathematical definition:
%This idea suggests already 
%
%n some sense ``remain round and fat" appealing to an intuitive notion of coherence, the later idea of which appealing to the robustness condition of \cite{FSM}.  For ``round and fat" we will introduce a reference shape set to which we will demand that a coherent set should roughly maintain its shape  over time.  Thus we offer the following definition, which while not constructive directly, it leads to a constructive algorithm to follow in subsequent sections as related to boundary curvature evolution.
\begin{definition}\label{scoh1}{\bf Finite Time Shape Coherence}
The shape coherence factor $\alpha$ between two measurable nonempty sets $A$ and $B$ 
under an area preserving flow $\Phi_t$ after a finite time epoch $t\in [0, T]$ is,
{\begin{eqnarray}\label{shapecoherenced}
%\displaystyle \alpha(A, B,T) := \sup_{S(B)} \frac{\int_{S(B)} \chi_{\Phi_{ T}(A)} (z) dz}{\mu(B)}
\displaystyle \alpha(A, B,T) := \sup_{S(B)} \frac{m( S(B) \cap \Phi_{ T}(A)) }{m(B)}
\end{eqnarray}}
where $S(B)$ is a group of transformations of rigid body motions of $B$, specifically translations and  rotations descriptive of {\it frame invariance.}  We interpret $m(\cdot)$ to denote Lebesgue measure, but one may substitute other measures as desired.
Then we say $A$ is finite time shape coherent to $B$ with the factor $\alpha$ under the flow $\Phi_T$ after the time epoch $T$, but stated brieflyT that $A$ is shape coherent to $B$.  We call $B$ as the {\it reference set}, and $A$ as the {\it dynamic set}.
%In particular, if B is a convex and A remains convex all the time, we say A is convex relatively shape coherent to B.
\end{definition}

 Where as generally a nonlinear flow can make for quite complicated distortions, stretching and folding on general sets, on a shape coherent set this definition could be interpreted by the statement that when the flow $\Phi_T$ is restricted to a shape-coherent set $A$, then $\Phi_T|_A$ is roughly equivalent to a simpler transformation, a member of the group $S$ of rigid body transformations.   That is a degree of simplicity on the spatial scale of the set $A$, and on the time scale $T$, even if on finer scales within the set $A$ there may be complicated flow.

\begin{figure*}%[htb!]
  \centering
  \subfloat[A set $A$ in a flow $\Phi$.]{\label{fig:gull}\includegraphics[width=.9\textwidth]{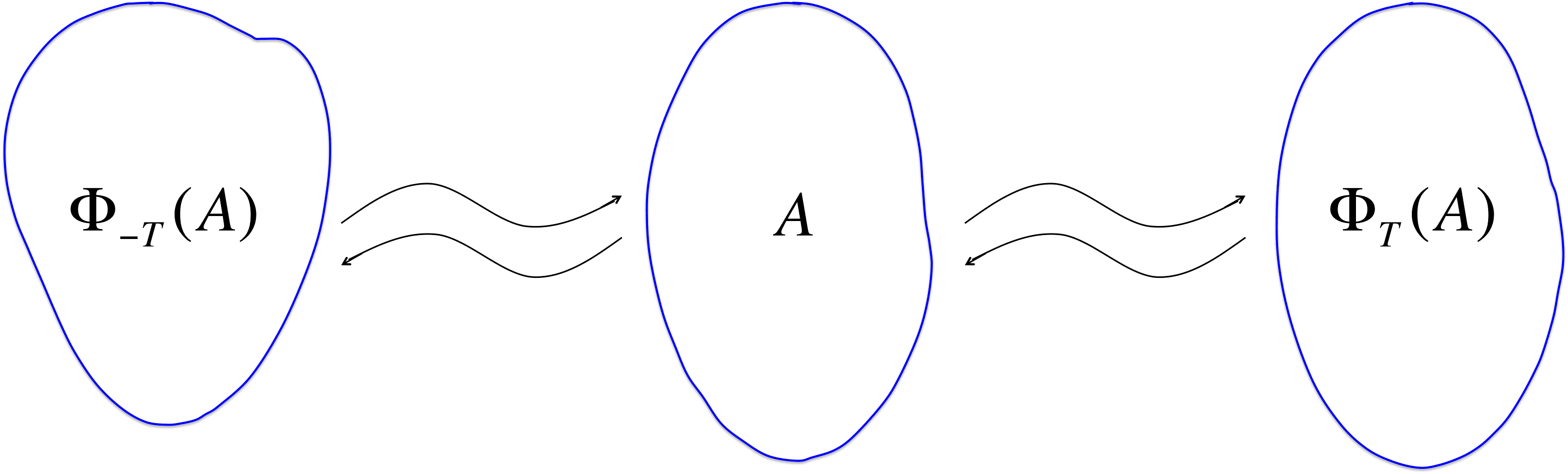}}  \\
  \subfloat[A registration between $\Phi_{-T}(A)$ and $A$]{\label{fig:gull}\includegraphics[width=.5\textwidth]{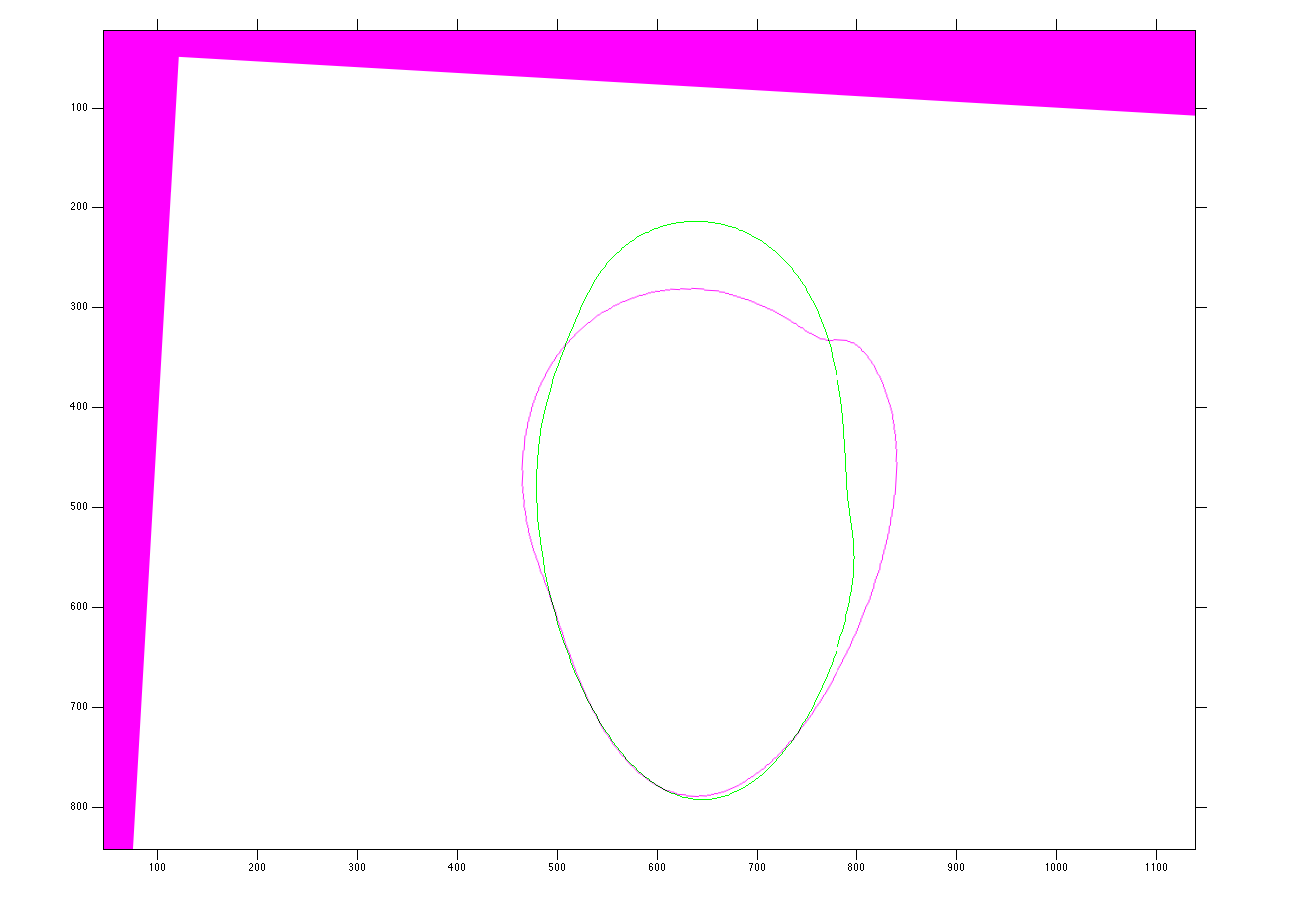}}      
  \subfloat[A registration between $\Phi_{T}(A)$ and $A$]{\label{fig:tiger}\includegraphics[width=.5\textwidth]{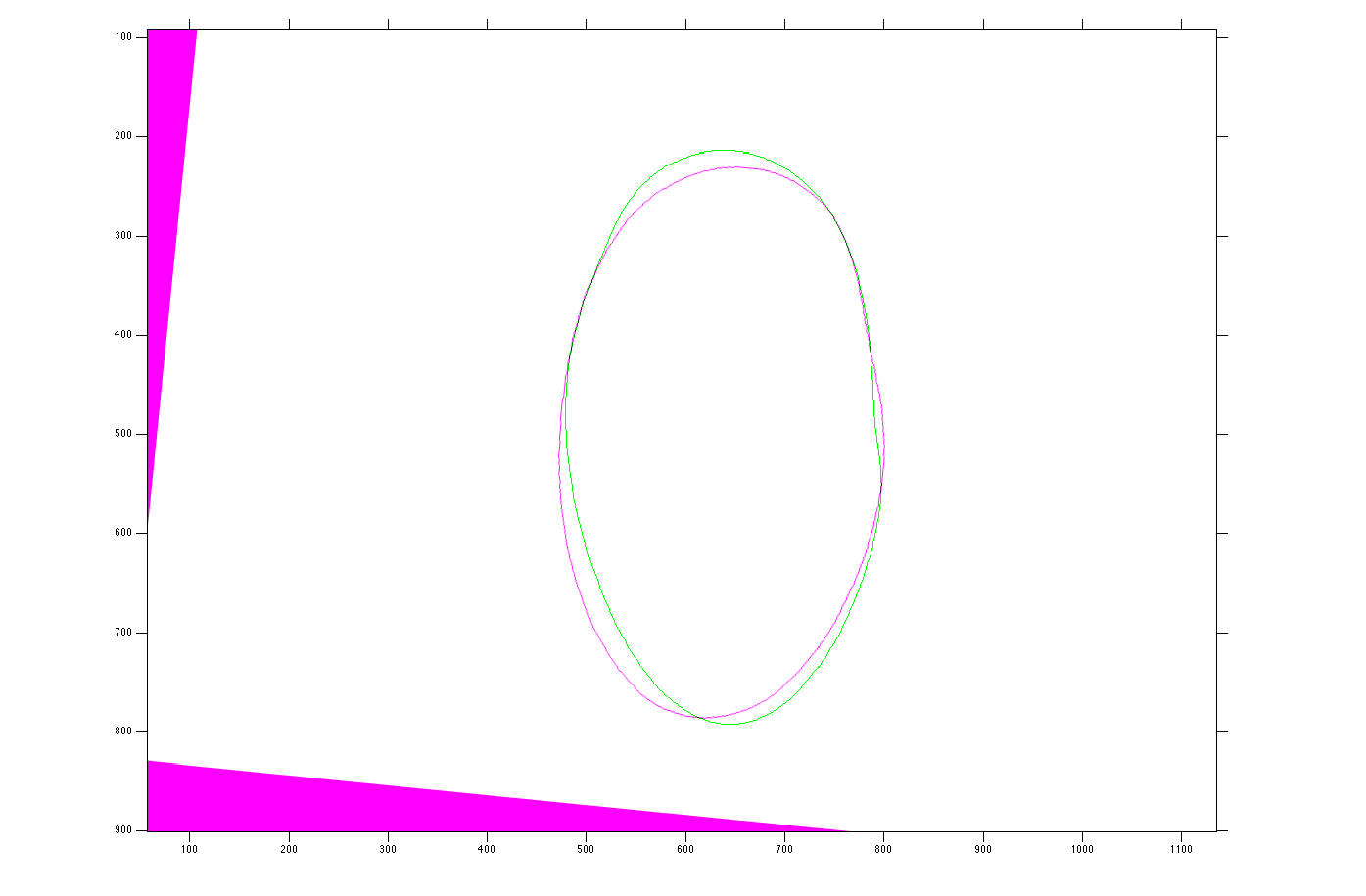}} 
  \caption{(b) and (c) show that $A$ almost keeps its shape in the flow, for both forward time and backward time. Here shapes were matched by direct application of Eq.~(\ref{shapecoherenced}), but specifically by traditional image processing registration algorithms.}
  \label{Registration}
\end{figure*}

\begin{remark}
\begin{enumerate}
\item 
While generally we can choose any kind of set $B$ as the reference set, the idea was designed so that a set corresponding to the intuitive idea of ``holding together" is more relevant.  So for example, one may choose a circle of the same area as $A$ as a reference set.  
\item It is interesting to choose the reference set $B$ to be the dynamic set $A$ itself.  Then we are considering if a set is  {\it finite time shape coherent relative to itself }over time.  That is, {\bf does $A$ maintain its shape?}
%
%
%The motivation we define relatively coherent sets is we can detect specific shapes we need, even shapes that are not in the system. 
%If we assume $A$ in time $t_0$ and $B$ is a set in time $t_0+T$, 
%we have $\alpha(\Phi_T(A), B)$ indicates a $(\alpha, t_0, T)$-coherent pair $(A_{t_0}, B_{t_0+T})$. 
%See \cite{FSM}.
In such case, we may choose $B=A$ at $t=t_0=0$ which is then compared to $\Phi_T(A)$ where $T=t_0+T$.  We are interested here with time epochs centered on $t_0$ and as such, we may choose time intervals $t_1<t_0<t_2$, perhaps to balance stretching. Generally in this paper we will discuss simply centered time, $t_1=-T<t_0=0<t_2=T$.
\item Direct computation of $\alpha$ for specific shapes is called the ``registration" problem within the computational geometry and image processing community, and there are fast algorithms, including based on the FFT to be used for these, \cite{CM, BS, BN}.
\item The definition yields that, $0 \leq \alpha(A,B,T)\leq 1$.
%\item In the next section we will introduce a differential geometry perspective to relate evolution of curvature of the boundaries of sets to the shape coherence.
\end{enumerate}
\end{remark}
Any group of transformations could be chosen, but we have restricted here to rigid body motions simply to match to the notion of curvature congruence for the boundary curve in the subsequent section.  However a shape coherence could be defined to also include such transformations as dilation, shear, or even a nonlinear transformation such as a conformal mapping, whether or not it has a group structure.   From the perspective of image warping, registration, and image processing, the group of transformations $S$ serves a role to describe the local transformation $\Phi_T$ {\it as restricted to a chosen set } $A$ and has the characteristics of the simpler transformation from $S$.  In this sense we are asking if $B$ ``warps" to $A$, $\Phi_T(A) \approx S(B)$ as a transformation.
Note that the form of the Definition, Eq.~(\ref{shapecoherenced}), allows a set which may have ``mixing" in its interior $A$ but nonetheless ``holds together" relative to $B$ as a whole. This comes from the specific form of the shape coherence measure, $\alpha$.

We may wish to contrast  a stronger condition that a set remains coherent constantly throughout the time epoch, rather than possibly just at the terminal times as defined by the coefficient $\alpha$ in Defn.~\ref{scoh1}.  In such case we may develop a coefficient $\beta$,
\begin{definition} {\bf Finite Time Shape Coherence Throughout the Time Epoch}
The shape coherence factor throughout the time epoch between between two measurable nonempty sets $A$ and $B$ 
under a flow $\Phi_t$ {\it throughout a complete }finite time epoch $[t_1,t_2]$ is defined as,
{\begin{eqnarray}
\displaystyle \beta(A, B,[t_1,t_2]) := \max_{t\in{[t_1,t_2]}} \displaystyle \alpha(A, B,t).
\end{eqnarray}}
\end{definition}

While $\alpha$ and $\beta$ are well defined, and therefore shape coherence is well defined when $\alpha\approx 1$, or throughout the time epoch if $\beta\approx 1$, direct application may resort to image registration methods, and this may seem awkward.  See in Fig.~\ref{Registration} where shapes $\Phi_{-T}(A)$ and $A$ as well as $A$ and $\Phi_{T}(A)$ were matched by ``traditional" image registration algorithms in Matlab, namely the {\it imregister} algorithm.
Instead we will now describe how these can be easily related to quantities from differential geometry that are easily computed and analyzed, namely the evolution of curvature of the boundaries.

%%%%%%%%%%%%%%%%%%%%%%%%%%%%%%%%%%%%%%%%%%%%%%%%%%%%%%%%%%%%%%%%%%%%%%%%%%%%%%%%%%%%%%%%%%%%%%%%%%%
%
%%%%%%%%%%%%%%%%%%%%%%%%%%%%%%%%%%%%%%%%%%%%%%%%%%%%%%%%%%%%%%%%%%%%%%%%%%%%%%%%%%%%%%%%%%%%%%%%%%%

\section{Curvature, and Frame Invariance of Shape Coherence}\label{CurveSec}

Here we  relate our definition of shape coherence to the readily computed quantities of the evolution of curvature  of the boundary curves from differential geometry.  In brief, by  Definition \ref{scoh1} above, a set is  shape coherent if it ``mostly" maintains its shape in the sense of a rigid body motion.  Here we relate matching the shape of a set $A$ to its image $\Phi_T(A)$ after the flow, to the problem of matching the boundary curves.  From differential geometry, shapes are matched if their boundary curves are matched and curves are matched if the curvatures of the curves are matched.  Furthermore, we show the  regularity statement  that a slowly changing boundary curve implies shape coherence.  Then in subsequent section we will explore the dynamical properties that correspond to slowly evolving boundary curves.  We begin by recalling the following,

\begin{definition}\label{conger}\cite{Carmo} {\bf Congruence} Two space curves are congruent if they differ only by rigid body motions, meaning translation and rotations.
\end{definition}

Congruence is generally accepted as the equivalence relationship between curves, in differential geometry, as it allows the following theorem that matching curvature and torsion leads to congruence.

\begin{theorem}\cite{Carmo}
{\bf Fundamental Theorem of Curve Theory}.
Two space curves $C$ and $\tilde{C}$ are congruent if and only if their corresponding arcs, $C:{\mathbf \gamma}(s)=(x(s),y(s),z(s)), 0\leq s\leq 1$ and $\tilde{C}: \tilde{{\mathbf \gamma}}(s)=(\tilde{x}(s),\tilde{y}(s),\tilde{z}(s)), 0\leq s\leq 1$, both paramaterized by unit arc length $s$, have curvature and torsion that can be matched, in the sense that there exists a ``shift" parameter $a$ such that $\kappa(s)=\tilde{\kappa}(s')$, and $\tau(s)=\tilde{\tau}(s')$ for all $s$, for $s'=mod(s-a,1)$.
\end{theorem}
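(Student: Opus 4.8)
The plan is to prove the two implications separately, disposing of the easier ``congruent $\Rightarrow$ matching invariants'' direction first and then establishing the substantive converse. Throughout I assume each arc is regular with nowhere-vanishing curvature, so that the Frenet frame $(T,N,B)$ of unit tangent, principal normal, and binormal is well defined and satisfies the Frenet--Serret system
\begin{equation}\label{frenet}
T' = \kappa N, \qquad N' = -\kappa T + \tau B, \qquad B' = -\tau N.
\end{equation}

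For the forward direction, suppose $\tilde{{\mathbf \gamma}}(s) = R\,{\mathbf \gamma}(s) + b$ for a fixed rotation $R \in SO(3)$ and translation $b$. Since $R$ is a linear isometry it preserves the arc-length parameterization and commutes with differentiation, so the Frenet frame of $\tilde{C}$ is exactly $(RT, RN, RB)$. Because $R$ preserves inner products and, having determinant one, preserves cross products, the scalar invariants $\kappa = \langle T', N\rangle$ and $\tau = \langle N', B\rangle$ are unchanged; allowing the shift $s' = \mathrm{mod}(s-a,1)$ absorbs any discrepancy in base point along the closed curve. This yields $\kappa(s) = \tilde{\kappa}(s')$ and $\tau(s) = \tilde{\tau}(s')$.

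For the converse, after relabeling the arc-length parameter of $\tilde{C}$ by the shift $a$ we may assume $\kappa(s) = \tilde{\kappa}(s)$ and $\tau(s) = \tilde{\tau}(s)$ for every $s$. Fix a base point $s_0$. Since both Frenet frames are positively oriented orthonormal triples, there is a unique $R \in SO(3)$ carrying $(\tilde{T}(s_0),\tilde{N}(s_0),\tilde{B}(s_0))$ to $(T(s_0),N(s_0),B(s_0))$; composing $\tilde{C}$ with this rotation and with the translation sending $\tilde{{\mathbf \gamma}}(s_0)$ to ${\mathbf \gamma}(s_0)$ produces a congruent copy whose frame and base point agree with those of $C$ at $s_0$. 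Both frames now solve the same linear system \eqref{frenet} with identical coefficients $\kappa,\tau$ and identical initial data at $s_0$, so by uniqueness for linear ODEs they coincide for all $s$. In particular ${\mathbf \gamma}'(s) = T(s) = \tilde{T}(s) = \tilde{{\mathbf \gamma}}'(s)$, and integrating from $s_0$, where the points already agree, gives ${\mathbf \gamma}(s) = \tilde{{\mathbf \gamma}}(s)$ throughout, proving congruence.

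A clean way to run the uniqueness step concretely is to set $f(s) = |T-\tilde{T}|^2 + |N-\tilde{N}|^2 + |B-\tilde{B}|^2$ and differentiate using \eqref{frenet}; because the Frenet--Serret coefficient matrix is skew-symmetric, all $\kappa$- and $\tau$-terms cancel and $f' \equiv 0$, so $f(s) = f(s_0) = 0$. I expect the main obstacle to lie not in this algebra but in the regularity hypothesis: the frame and the torsion are only defined where $\kappa \neq 0$, so the theorem genuinely requires nonvanishing curvature (or a separate argument at inflection points), and one must check that the frame-aligning map is orientation-preserving. This last point is exactly why both frames being right-handed is essential, and why \emph{rigid body motions} rather than general isometries including reflections are the correct equivalence for the statement.
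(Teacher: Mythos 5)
The paper offers no proof of this statement---it is quoted verbatim from do Carmo \cite{Carmo} as the classical Fundamental Theorem of Curve Theory---so the only comparison available is to the standard textbook argument, which is exactly what you give: rigid motions preserve the Frenet apparatus for the forward direction, and for the converse you align frames and base points at $s_0$ and invoke uniqueness for the linear Frenet--Serret system, with the $f(s)=|T-\tilde{T}|^2+|N-\tilde{N}|^2+|B-\tilde{B}|^2$ skew-symmetry computation being precisely do Carmo's. Your proof is correct, and you rightly flag the one genuine hypothesis the paper's statement leaves implicit, namely nonvanishing curvature so that the frame and torsion are defined (a non-issue for the paper's actual use case, since it immediately specializes to planar curves where torsion vanishes and only the curvature-matching half of the Frenet system is used).
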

We refer to a choice of $s'=mod(s-a,1)$ that matches curvatures and torsion as exactly ``lined-up"  parameterizations, as depicted in Fig.~\ref{CurvatureProp}.
\begin{remark}
%\begin{enumerate}
%\item 
In this paper, since we will specialize to planar flows and curves outlining planar sets, here, torsion is  zero, $\tau=\tilde{\tau}=0$, and $z(s)=\tilde{z}(s)=0$.  So we  write simply, ${\mathbf \gamma}(s)=(x(s),y(s))$ and likewise for $\tilde{{\mathbf \gamma}}(s)$.
%\end{enumerate}
\end{remark}

Now recalling the Frenet-Serret formula, \cite{Carmo, Frenet, Serret},
\begin{equation}
\left[
\begin{array}{c}
{\mathbf T} \\ {\mathbf N }\\ {\mathbf B}
\end{array}
\right]'=
\left[
\begin{array}{ccc}
0 &  \kappa & 0 \\
-\kappa &  0 &  \tau \\
0 &  -\tau &  0 
\end{array}
\right]
\left[
\begin{array}{c}
{\mathbf T} \\ {\mathbf N} \\ {\mathbf B}
\end{array}
\right],
\end{equation}
but in our planar setting, we get the specialized case,
\begin{equation}\label{FS}
{\mathbf T}'=\kappa{\mathbf  N}, {\mathbf N }'=-\kappa {\mathbf T}.
\end{equation}
The general solution of Eq. (\ref{FS}) can be readily found,
\begin{eqnarray}
{\mathbf T}=C_1 \sin \int_s \kappa (\sigma) d \sigma - C_2 \cos \int_s \kappa (\sigma) d \sigma, \nonumber   \\
{\mathbf N}=C_1 \cos \int_s \kappa (\sigma) d \sigma + C_2 \sin \int_s \kappa (\sigma) d \sigma, 
\label{GeneralSolution}
\end{eqnarray}
where $C_1$ and $C_2$ are constant vectors.

This specifies the tangent and normal vectors of a curve, $T(s)$ and $N(s)$ in terms of a given curvature function, $\kappa(s)$ as an ODE in terms of the derivative of the  (unit) parameterization, $':=\frac{d}{ds}$.  Furthermore, since $T(s)={\mathbf \gamma}'(s)$, solving Eq.~({\ref{FS}) for $T(s)$ (and simultaneously $N(s)$) for a given curvature $\kappa(s)$, allows the curve to be found,
\begin{equation}\label{CurveEqn}
{\mathbf \gamma}(s)=\int_0^s {\mathbf T}(\sigma)d\sigma + {\mathbf \gamma}_0.
\end{equation}
So we see that the curvature specifies the curve, up to an initial position and orientation, ${\mathbf \gamma}_0$.  This clarifies part of the interest in the Frenet frame for the definition of congruence in matching curves.  We wish to point out that the Frenet frame is closely related to the dynamical notion of frame invariance.

Now in the following two theorems, we show  that close curvature functions correspond to close curves.  Then subsequently we show that it then follows that the shape coherence must be significant. The proofs of these two theorems are given in Appendix \ref{appendixproofs}.

\begin{theorem}\label{prop1}
 Given two curvature functions, $\kappa_1(s)$ and $\kappa_2(s)$ of two closed curves $\gamma_1(s)$ and $\gamma_2(s)$ with the same arc length, if $\sup_{s}|\kappa_1(s)-\kappa_2(s)| < \varepsilon$,  
then there exists, 
 \begin{equation}
\delta( \varepsilon)= s^2 \varepsilon ( \|  C_1 \|_2 + \|  C_2 \|_2 ) >0,
\end{equation}
where $C_1$ and $C_2$ are the constant vectors in Eq.~(\ref{GeneralSolution}) and $s$ is the arc length from initial point of the two lined-up curves,
such that  
\begin{equation}
\|\gamma_1(s)-\gamma_2(s) \|_2<\delta(\epsilon).
\end{equation}

\end{theorem}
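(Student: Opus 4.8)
The plan is to use the explicit representation of the unit tangent in terms of the integrated curvature from Eq.~(\ref{GeneralSolution}), and then recover the curve difference by integrating the tangent difference through Eq.~(\ref{CurveEqn}). First I would introduce the turning angle $\theta_i(s) := \int_0^s \kappa_i(\sigma)\,d\sigma$ for $i=1,2$, so that Eq.~(\ref{GeneralSolution}) expresses the tangents as $\mathbf{T}_i(s) = C_1 \sin\theta_i(s) - C_2 \cos\theta_i(s)$. The crucial structural point is that the two curves are assumed to be \emph{lined-up}, meaning their Frenet frames are aligned at the common initial point $\gamma_0$; evaluating the representation at $s=0$ (where $\theta_i(0)=0$) shows $\mathbf{T}_i(0)=-C_2$ and $\mathbf{N}_i(0)=C_1$, so sharing the initial frame is exactly what forces both tangents to be written with the \emph{same} constant vectors $C_1,C_2$, leaving only the scalar turning angles $\theta_i$ to differ.

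Next I would estimate the turning angle discrepancy directly from the curvature hypothesis, using that the shared unit arc length parameterization runs over the same interval:
\[
|\theta_1(s)-\theta_2(s)| = \left|\int_0^s \bigl(\kappa_1(\sigma)-\kappa_2(\sigma)\bigr)\,d\sigma\right| \leq \int_0^s |\kappa_1(\sigma)-\kappa_2(\sigma)|\,d\sigma \leq s\varepsilon .
\]
Subtracting the two tangent representations, using the triangle inequality, and exploiting that $\sin$ and $\cos$ are $1$-Lipschitz (so $|\sin a-\sin b|\leq|a-b|$ and likewise for cosine, by the mean value theorem), I obtain
\[
\|\mathbf{T}_1(\sigma)-\mathbf{T}_2(\sigma)\|_2 \leq \bigl(\|C_1\|_2+\|C_2\|_2\bigr)\,|\theta_1(\sigma)-\theta_2(\sigma)| \leq \bigl(\|C_1\|_2+\|C_2\|_2\bigr)\,s\varepsilon ,
\]
where the final step bounds $\sigma\leq s$ on the interval of integration.

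Finally, I would integrate this pointwise tangent bound via Eq.~(\ref{CurveEqn}), noting that lined-up curves share $\gamma_0$ so the constant cancels:
\[
\|\gamma_1(s)-\gamma_2(s)\|_2 = \left\|\int_0^s \bigl(\mathbf{T}_1(\sigma)-\mathbf{T}_2(\sigma)\bigr)\,d\sigma\right\|_2 \leq \int_0^s \|\mathbf{T}_1(\sigma)-\mathbf{T}_2(\sigma)\|_2\,d\sigma \leq s^2\varepsilon\bigl(\|C_1\|_2+\|C_2\|_2\bigr),
\]
which is precisely $\delta(\varepsilon)$; positivity $\delta(\varepsilon)>0$ is immediate for $\varepsilon>0$ and nontrivial frame vectors. (One could retain the sharper constant $s^2/2$ by keeping $\int_0^s \sigma\,d\sigma$, but the stated form follows from the cruder estimate $\sigma\leq s$.) I expect the genuine subtlety to lie not in these routine Lipschitz estimates but in justifying that both curves admit a representation with common constants $C_1,C_2$ and common $\gamma_0$: this is exactly the content of the lined-up alignment, and I would state it explicitly as the hypothesis being invoked. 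A secondary remark worth making is that the bound grows like $s^2$ along the arc, so the estimate is tightest near the aligned initial point and should be read accordingly.
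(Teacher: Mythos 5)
Your proposal is correct and follows essentially the same route as the paper's proof: both represent the tangents via Eq.~(\ref{GeneralSolution}) with common constants $C_1,C_2$ fixed by the lined-up initial frame, bound the difference of integrated curvatures by $s\varepsilon$, and integrate to obtain the identical bound $s^2\varepsilon(\|C_1\|_2+\|C_2\|_2)$. The only cosmetic difference is that the paper reaches the pointwise tangent estimate through sum-to-product trigonometric identities followed by $|\sin p|\leq p$, whereas you invoke the $1$-Lipschitz property of sine and cosine directly --- these are the same estimate.
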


\begin{figure*}%[htb!]
  \centering
  \subfloat[Two curves with similar curvatures and same arc length.]{\label{fig:gull}\includegraphics[width=.55\textwidth]{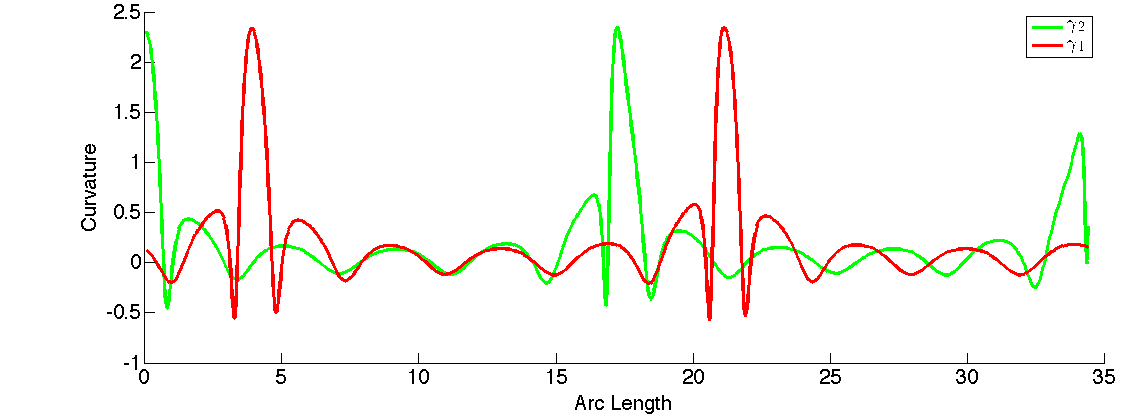}} 
  \centering
  \subfloat[Two curves with similar curvatures and same arc length.]{\label{fig:gull}\includegraphics[width=.55\textwidth]{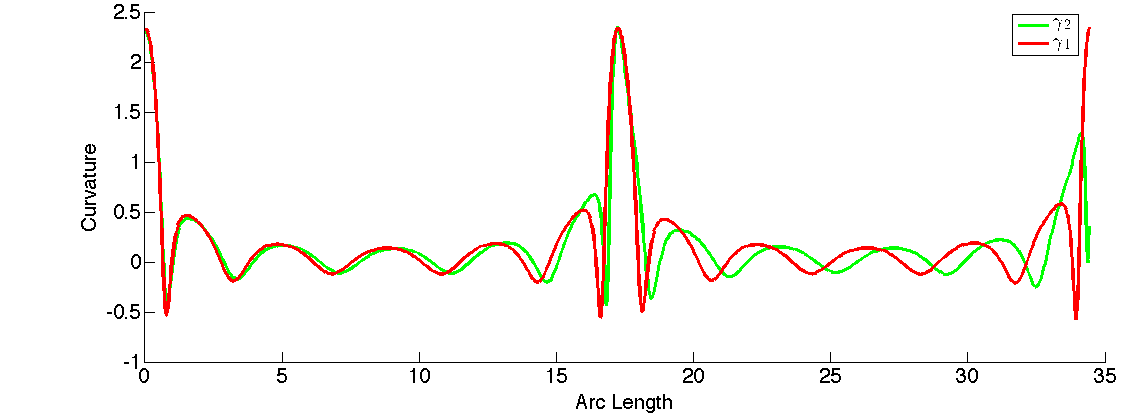}} 
  \qquad
  \qquad
\subfloat[Two curves that have been rectified (lined-up).]{\label{fig:gull}\includegraphics[width=.5\textwidth]{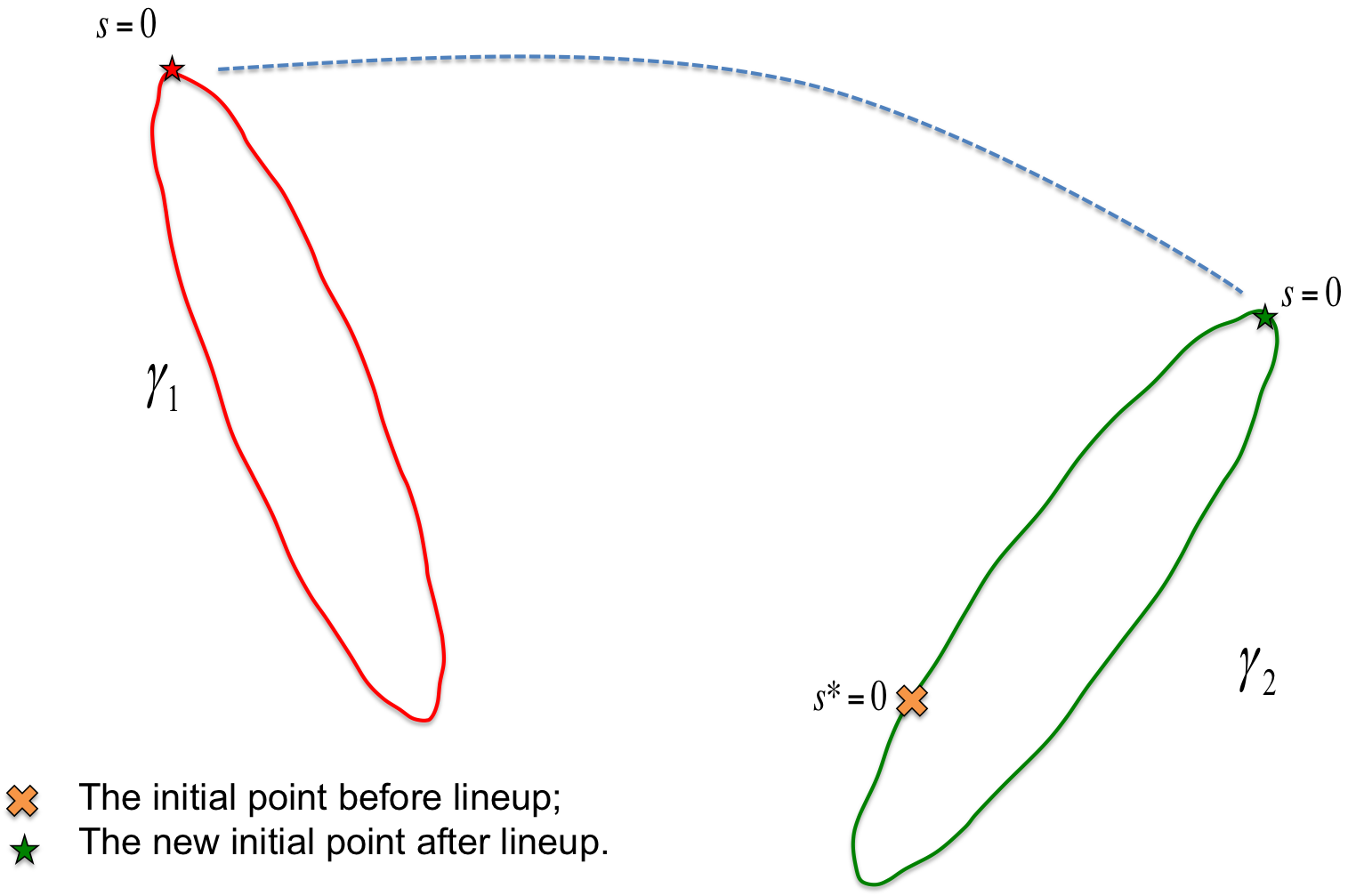}} 
  \subfloat[Two curves that have been rectified (lined-up).]{\label{fig:gull}\includegraphics[width=.5\textwidth]{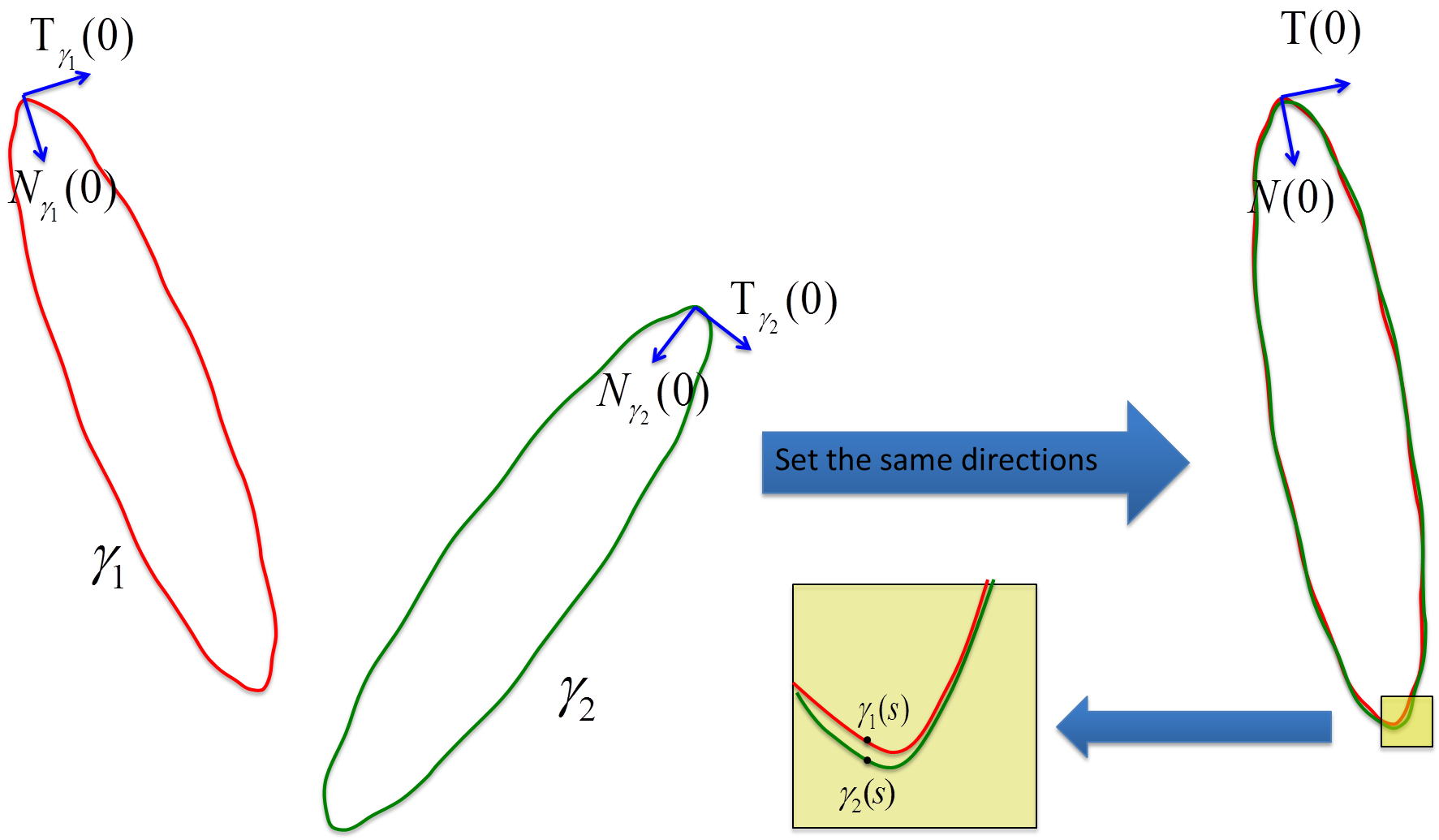}} 
%  \caption{$\hat{\gamma}(s)$ and $\tilde{\gamma}(s)$ are two given curves to be compared, which are red and green. (a) and (b) is a example show how curves are ``lined-up" as described in Eqs. (\ref{linedup1})-(\ref{linedup2}). In (b), we line up the two curves by letting the two curve have same tangent and normal unit vectors at $s=0$, notice that the curvature may vary. }
 \caption(c) Two curves with the same arc length are shown, but different the curvatures differ pointwise; note in particular the initial points $s=0$ (red star on $\gamma_1$) and $s^*=0$ (orange cross on $\gamma_2$). In (a) and (b), we apply a circular convolution on the two curves to find a new initial point $s=0$ (green star) on $\gamma_2$ so that it has pointwise almost the same curvature function as $\gamma_1$, as described in Eqs. (\ref{linedup1})-(\ref{linedup2}). (d) With the new initial point on $\gamma_2$, the curves are approximately lined-up  by overlapping and setting the same directions for both the unit normal and unit tangent vectors of the two initial points. 
  \label{CurvatureProp}
\end{figure*}
}

The idea of congruence in Definition \ref{conger} is that comparison of two curves should be immune to  details such as the parameterization of one curve may be shifted relative to the other  despite if they are otherwise (mostly) the same. 
Even if comparing two very similar curves, but which are not oriented in a convenient way, it is possible that $\kappa_1(s_1)$ and $\kappa_2(s_2)$ may not be closely matched pointwise; nonetheless, they may define curves $\gamma_1(s)$ and $\gamma_2(s)$ that are close or even the same in the sense of congruence.  In such case, the only problem to see that curves are so similar by comparing the curvatures is that the parameterizations of each of the two curves  are not ideally ``aligned".  Such a situation is as depicted in Fig.  \ref{CurvatureProp}.  In other words there may exist some shift $a$, such that if we can define,
\begin{equation}\label{linedup1}
s=s_1, \mbox{ and, } s_2=s+a,
\mbox{ such that, }
%$\begin{equation}\label{linedup2}
\sup_{s}|\kappa_1(s)-\kappa_2(s)| < \varepsilon,
\end{equation}
and if it turns out that  $\varepsilon\geq 0$ is sufficiently small, then by Theorem \ref{prop1}, it can be shown that $\gamma_1(s)$ and $\gamma_2(s)$ are close.  
In practice, before 
 resorting to Theorem \ref{prop1}, and assuming continuous functions, it is useful to choose, 
\begin{equation}\label{linedup2}
a=\mbox{argmin} \max_s |\kappa_1(s)-\kappa_2(s)|, \mbox{ where, }s=s_1, \mbox{ and, } s_2=s+a.
\end{equation}
  The curves are approximately lined-up. One useful way we find to estimate such a shift is by  convolution.

In the next theorem we describe how closely matching curves leads to closely matched shapes as measured in terms of shape coherence.  This together with the above that closely matching curvature functions leads to closely matched curves, informs us in the following sections that studying propensity of slowly changing curvature is the key to understanding shape coherence.  To most usefully apply the following theorem, we should again assume generally  that the parameterizations are  designed to associate points along the curves in an efficient way as described in the previous paragraph; otherwise there may exist a parameterization that causes a small $\epsilon$, but we may not realize it without considering Eq.~(\ref{linedup2}).    

\begin{figure*}%[htb!]
  \centering
  \subfloat{\label{fig:gull}\includegraphics[width=.5\textwidth]{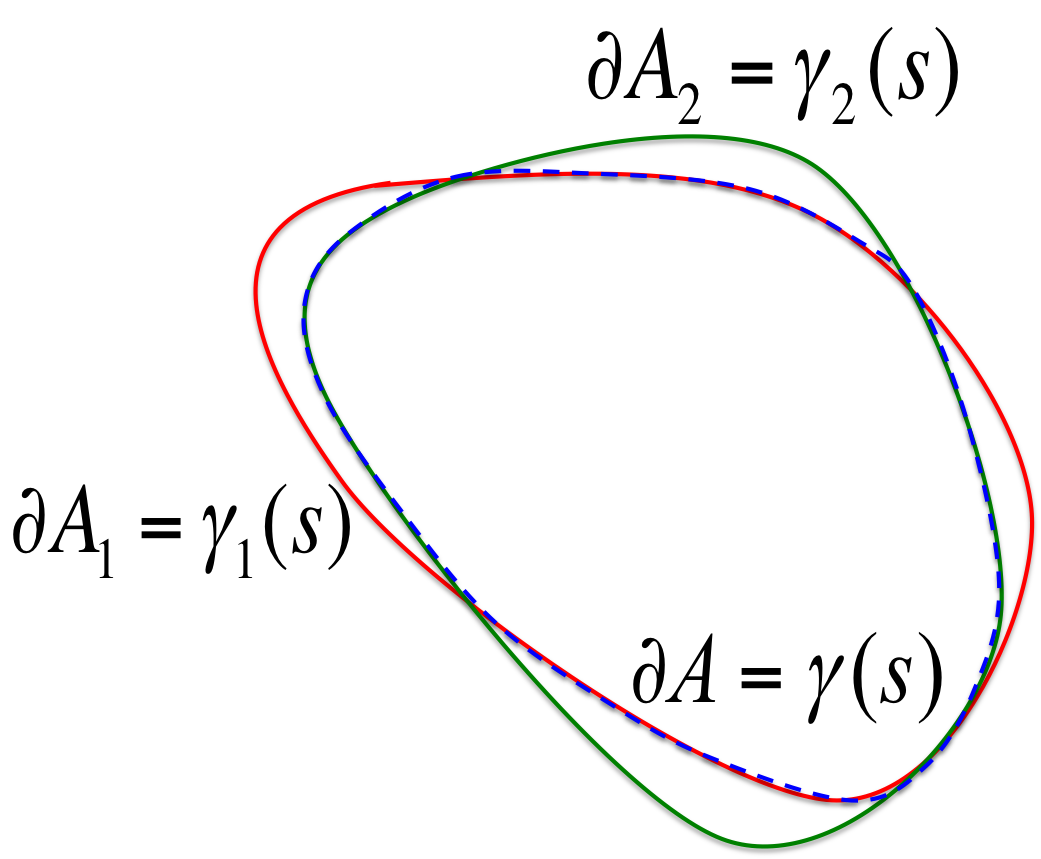}} 
  \caption{The red, green and blue closed curves are the boundaries of sets $A_1, A_2$ and $A$.}
  \label{AreaIntersection}
\end{figure*}

\begin{theorem}\label{3.6}
For two closed curves $\gamma_1(s)=(x_1(s), y_1(s))$ and $\gamma_2(s)=(x_2(s), y_2(s)) (0\le s < 2\pi)$ which are boundaries of sets $A_1$ and $A_2$.   See Fig.~\ref{AreaIntersection}.  Let the boundaries of $A=A_1 \cap A_2$
$A=A_1 \cap A_2\neq \emptyset$ 
such that $area(A)>0$ 
be $\gamma(s)=(x(s),y(s))$, and,
%If $s^*=\mod (a-\tilde{s}, 1)$ for $J(\kappa_1, \kappa_2)$ as in Eq. (\ref{Convolution1}) holds, and let
%$\sup_{\hat{s}, \tilde{s}}|\hat{\kappa}(\hat{s})-\tilde{\kappa}(\tilde{s})| < \delta$
%for a $\delta >0$, then $\exists \  0<\Delta<1$ s.t. 
%$\alpha(\hat{A}, \tilde{A},0)>1-\Delta$
\begin{eqnarray}\label{1stShapeCoherent}
\varepsilon= \max\{|x-x_2|,|y-y_2|,|x'-x_2'|,|y'-y_2'| \} \nonumber \\
M= 2\max\{ |x|,|x_2|,|y|,|y_2|,|x'|,|x_2'|,|y'|,|y_2'|\},
\end{eqnarray}
then there exist a $\Delta(\epsilon)$, which is defined as, 
\begin{eqnarray}\label{2stShapeCoherent}
\Delta(\epsilon)=\frac{2\pi M \varepsilon }{Area(A_2)}
\end{eqnarray}
such that 
\begin{eqnarray}\label{3stShapeCoherent}
1\geq \alpha(A_1, A_2, 0)\ge1-\Delta(\epsilon)
\end{eqnarray}
\end{theorem}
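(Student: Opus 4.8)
The plan is to bound $\alpha(A_1,A_2,0)$ from below by testing the supremum in Definition~\ref{scoh1} against a single, particularly convenient member of the rigid-motion group, namely the identity. Since $\Phi_0$ is the identity flow and the identity belongs to $S$, restricting the supremum in Eq.~(\ref{shapecoherenced}) to this one transformation gives
\begin{equation}
\alpha(A_1,A_2,0)=\sup_{S(A_2)}\frac{m(S(A_2)\cap A_1)}{m(A_2)}\ \ge\ \frac{m(A_2\cap A_1)}{m(A_2)}=\frac{\mathrm{Area}(A)}{\mathrm{Area}(A_2)},
\end{equation}
using $A=A_1\cap A_2$. The upper bound $\alpha\le 1$ is immediate, either by citing the Remark following Definition~\ref{scoh1} or by noting that rigid motions preserve Lebesgue measure, so $m(S(A_2)\cap A_1)\le m(S(A_2))=m(A_2)$. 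Hence the theorem reduces to the single area estimate
\begin{equation}
\mathrm{Area}(A_2)-\mathrm{Area}(A)\ \le\ 2\pi M\varepsilon,
\end{equation}
after which dividing by $\mathrm{Area}(A_2)$ and recalling $\Delta(\varepsilon)=2\pi M\varepsilon/\mathrm{Area}(A_2)$ finishes the argument. Note that $A\subseteq A_2$, so this difference is automatically nonnegative.

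Next I would express both areas as boundary line integrals through Green's theorem, writing $\mathrm{Area}(A_2)=\tfrac12\int_0^{2\pi}(x_2 y_2'-y_2 x_2')\,ds$ and $\mathrm{Area}(A)=\tfrac12\int_0^{2\pi}(x y'-y x')\,ds$, using the assumed common parameter range $[0,2\pi)$ and a shared positive orientation. Subtracting, the task becomes bounding the integrand $x_2y_2'-y_2x_2'-(xy'-yx')$ pointwise. The standard device is to add and subtract cross terms so that each product becomes a ``difference times a bounded factor'': for instance $x_2y_2'-xy'=(x_2-x)y_2'+x(y_2'-y')$, and symmetrically for the $-y_2x_2'+yx'$ part. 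Every factor of the form $(x_2-x),(y_2-y),(x_2'-x'),(y_2'-y')$ is bounded by $\varepsilon$ by definition of $\varepsilon$, and every remaining factor $|x|,|x_2|,|y'|,|y_2'|,\dots$ is bounded by $M/2$ by definition of $M$. The four resulting terms then each contribute at most $(M/2)\varepsilon$ in absolute value, so the integrand is at most $2M\varepsilon$, and
\begin{equation}
\mathrm{Area}(A_2)-\mathrm{Area}(A)\le \tfrac12\int_0^{2\pi}2M\varepsilon\,ds=2\pi M\varepsilon,
\end{equation}
which is exactly the required estimate; the factor $2$ built into the definition of $M$ is precisely what makes the constant emerge cleanly.

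The routine part is the algebraic expansion and the pointwise bound; the genuinely delicate point is the geometric and parameterization setup that legitimizes the Green's-theorem comparison. One must ensure that the boundary $\gamma$ of the intersection $A=A_1\cap A_2$ is a single positively oriented closed curve, so that the standing hypotheses force $A$ to be a topological disk with rectifiable boundary rather than a disconnected or non-simply-connected region, and that $\gamma$ and $\gamma_2$ are parameterized over the same interval $[0,2\pi)$ with the same orientation and ``lined up'' so that the pointwise differences entering $\varepsilon$ are meaningful at all. This is the analogue here of the alignment discussion preceding Eqs.~(\ref{linedup1})--(\ref{linedup2}): without a compatible correspondence $s\mapsto s$ between the two boundaries, the quantity $\varepsilon$ would not control the separation of the curves, and hence would not control the area. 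Once this correspondence is assumed, as the statement implicitly does by writing both curves on $0\le s<2\pi$, the remainder is the elementary estimate above.
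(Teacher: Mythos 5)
Your proposal is correct and follows essentially the same route as the paper's own proof: Green's theorem to express both areas as boundary integrals, the add-and-subtract cross-term decomposition bounding each of the four resulting products by $\tfrac{1}{2}M\varepsilon$, and the lower bound on $\alpha$ obtained by testing the supremum against a single rigid motion so that $\alpha(A_1,A_2,0)\ge m(A)/m(A_2)$. Your closing remarks on orientation, connectedness of the overlap, and the need for a compatible parameterization correspond to the caveats the paper itself appends after its proof.
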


%\begin{figure*}%[htb!]
%  \centering
%  \subfloat{\label{fig:gull}\includegraphics[width=.5\textwidth]{AreaIntersection}} 
%  \caption{The red, green and blue closed curves are the boundaries of sets $\hat{A}, \tilde{A}$ and $A$.}
%  \label{AreaIntersection}
%\end{figure*}

\section{Finite-Time Stable and Unstable Foliations}\label{SMFsection}

%The finite-time stable and unstable foliations are somewhat related to stable and unstable manifolds, but with key differences. 
%Notably, for finite time, the foliations exist for nearly any point in the domain, but 
% stable and unstable manifolds related to periodic points, derive from the stable manifold theorem, \cite{**}.  

Stated simply, the stable foliation at a point describes the dominant direction of local contraction in forward time, and the unstable foliation describes the dominant direction of contraction in ``backward" time.    See Figs.~\ref{SVDVis}-\ref{FoliationAndCurves1}.
Generally, the Jacobian matrix,
$D\Phi_t(z)$ of the flow $\Phi_t(\cdot)$ evaluated at the point $z$ has the same action as does any matrix in that a circle maps onto an ellipse. 
In Figs.~\ref{SVDVis}-\ref{FoliationAndCurves1} we illustrate the general infinitesimal geometry of a small disc of variations $\epsilon w$ from a base point $\Phi_t(z)$.  At $z$, we observe that a circle of such vectors, $w=<\cos(\theta),\sin(\theta)>, 0\leq\theta \leq 2\pi$ centered at the point $\Phi_t(z)$ pulls back under $D\Phi_{-t}({\Phi_{t}(z)})$ to an ellipsoid centered on $z$.  The major axis of that infinitesimal ellipsoid defines  $f_s^t(z)$, the stable foliation at $z$.  Likewise, from $\Phi_{-t}(z)$, a small disc of variations pushes forward under $D\Phi_{t}({\Phi_{-t}(z)})$ to an ellipsoid, again centered on $z$.  The major axis of this ellipsoid defines the unstable foliations, $f_u^t(z)$.

To compute the major axis of ellipsoids corresponding to how discs evolve under the action of matrices, we may resort to the singular value decomposition 
\cite{GV89}.  Let,
%\begin{equation}
%D\Phi_{-t}_{\Phi_{t}(z)}=U\Sigma V^*, 
%\end{equation}
\begin{equation}\label{svd}
D\Phi_{t}({z})=U\Sigma V^*, 
\end{equation}
where $^*$ denotes the transpose of a matrix,  $U$ and $V$ are orthogonal matrices, and $\Sigma=diag(\sigma_1,\sigma_2)$ is a diagonal matrix.  By convention we choose the index to order, $\sigma_1\geq \sigma_2\geq 0$.
As part of the standard singular value decomposition theory, principal component analysis provides that the first unit column vector of $V=[v_1|v_2]$ corresponding to the largest singular value, $\sigma_1$, is the major axis of the image of a circle under the matrix $D\Phi_{t}({z})$ around $z$.   That is, 
\begin{equation}
D \Phi_{t}({z}) v_1=\sigma_1 u_1,
\end{equation}
as seen in Fig.~\ref{SVDVis}
 describes the vector $v_1$ at $z$ that maps onto the major axis, $\sigma u_1$ at $\Phi_t(z)$.  Since $\Phi_{-t}\circ \Phi_t(z)=z$, and $D\Phi_{-t}(\Phi_t(z)) D \Phi_t(z)=I$, then recalling the orthogonality of $U$ and $V$, it can be shown that,
\begin{equation}
D\Phi_{-t}({\Phi_t(z)})=V\Sigma^{-1} U^*, 
\end{equation}
and $\Sigma^{-1}=diag(\frac{1}{\sigma_1},\frac{1}{\sigma_2})$.  Therefore, $\frac{1}{\sigma_2}\geq \frac{1}{\sigma_1}$, and the dominant axis of the image of an infinitesimal circle from  $\Phi_t(z)$ comes from, 
$D \Phi_{t}({z}) u_2=\frac{1}{\sigma_2} v_2$.  

\begin{figure*}%[htb!]
  \centering
  \subfloat{\label{fig:gull}\includegraphics[width=.8\textwidth]{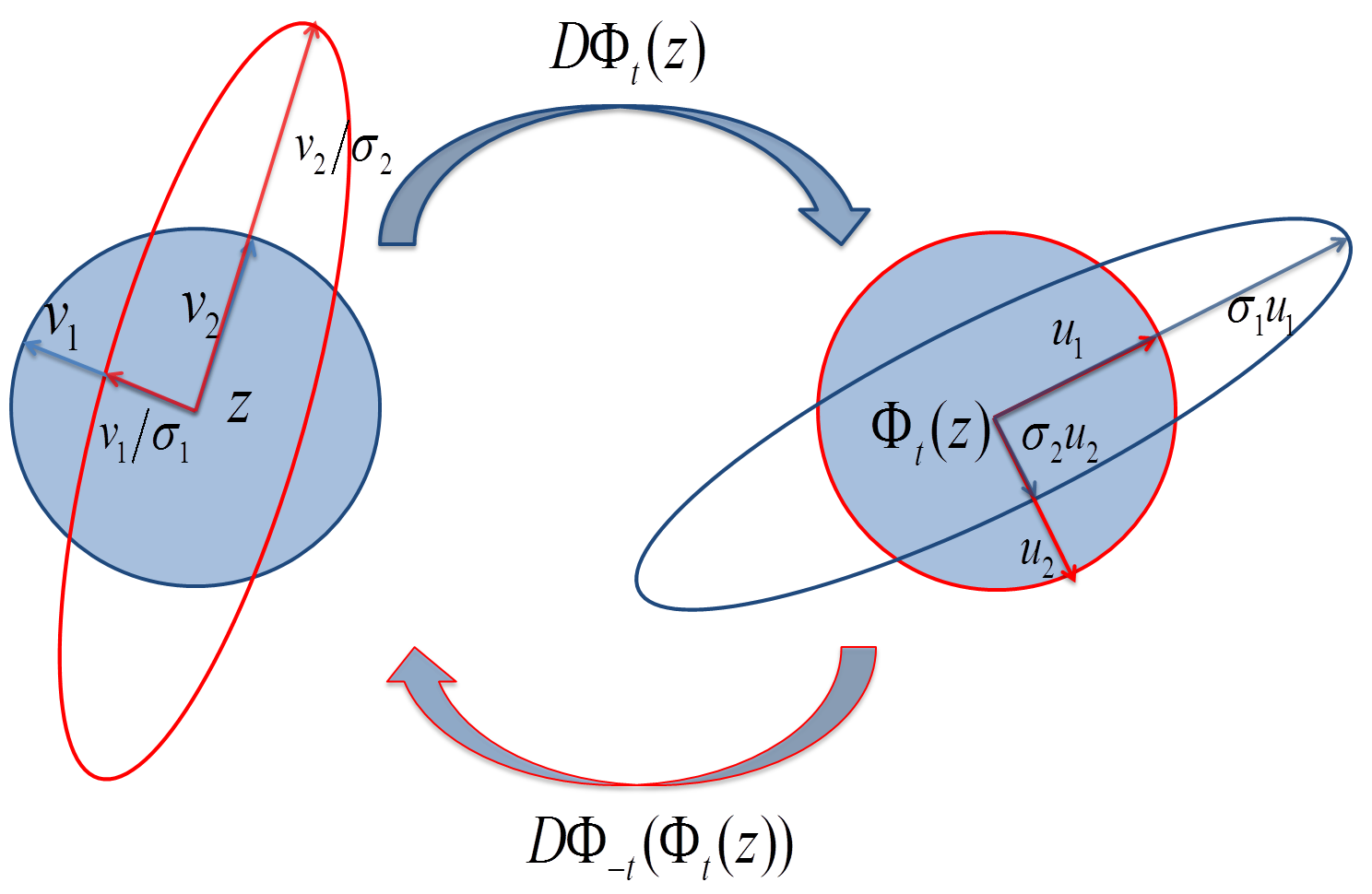} }
  \caption{The SVD Eq.~(\ref{svd}) of the flow $\Phi_t(z)$ can be used to infer the finite time stable foliation $f_s^t(z)$ (and likewise finite time unstable foliation $f_u^t(z)$ at $z$ in terms of the major and minor axis as shown and described in Eqs.~(\ref{svdfs})-(\ref{svdfu}).  }
  \label{SVDVis}
\end{figure*}

We summarize, the stable foliation at $z$ is,
\begin{equation}
 f_s^t(z)=v_2, 
 \end{equation}
where $v_2$ is the second right singular vector of $D\Phi_{t}({z})$, according to Eq.~(\ref{svd}).  Likewise by description above, the unstable foliation,
\begin{equation}\label{svdfs}
 f_u^t(z)=\overline{u}_1, 
 \end{equation}
 where $\overline{u}_1$ is the first left singular vector of the matrix decomposition, 
 \begin{equation}\label{svdfu}
 D\Phi_{t}({\Phi_{-t}(z)})=\overline{U} \mbox{ } \overline{\Sigma} \mbox{ } \overline{V}^*.
 \end{equation}

An important concept here is the included angles between the stable and unstable foliations as follows.
\begin{definition}
The included angle of the finite-time stable and unstable foliations is defined as 
$ \theta(z, t): \Omega \times \mathbb{R}^+ \to [-\pi/2, \pi /2]$
{\begin{eqnarray}
\theta(z, t):=\arccos\frac{\left\langle f_s^t(z), f_u^t(z) \right\rangle}{\|f_s^t(z) \|_2 \| f_u^t(z) \|_2}
\label{AngleFunction}
\end{eqnarray}}
\end{definition}
See the included angle indicated in Fig.~\ref{FoliationAndCurves1}, which plays a role in evolution of curvature and shape coherence as discussed in the subsequent section.

\section{Curvature Evolution Near Local Hyperbolicity and Nonhyperbolicty}\label{seccurvesofstableunstable}

In Sec.~\ref{CurveSec} we presented theory that shapes whose boundary curves have the property that their curvature is slowly varying in time correspond to shape coherent sets.
  In this section we will argue on geometric grounds that curves whose points  correspond to tangency between stable and unstable foliations, as presented in Sec.~\ref{SMFsection},  tend to have slowly evolving curvature.  Thus stable and unstable foliations are related to shape coherence.  For this reason, in the next section we will present theory and later a constructive method to find curves of such tangencies as a means to construct shape coherent sets.

Before tackling the general problem of nonlinear flows as evolved in finite time, we discuss the linear flow, representing the evolution of curvature in the neighborhood of a base point $z=(x_1,x_2)$ with each of several hyperbolic an non hyperbolic scenarios.

Here we cover a hyperbolic saddle which we contrast to three (precursor of) non-hyperbolic transformation types, scaling, rotation and shear \cite{B,L}. We recover the curvature formula under each of these linear transformation maps, representing the local behaviors of the flow near a point of zero-splitting foliation.

\begin{enumerate}

\item Hyperbolic Saddle;

Let,
\begin{equation}\label{dec}
\dot{x}_1=\lambda_1 x_1, x_1(0)=x_{1,0}, x_2(0)=x_{2,0}, \dot{x}_2=\lambda_2 x_2, \lambda_2>0>\lambda_1,
\end{equation}
which by design is a hyperbolic saddle and decoupled, so the stable and unstable manifolds are orthogonal.  Now consider the evolution of a curve of initial conditions, 
\begin{equation}
x_{2,0}=f(x_{1,0}).
\end{equation}
As an example of how this curve evolves in such a manner so as to increase curvature at the origin, see Fig.~\ref{FoliationAndCurves1}a.  To verify this statement mathematically,  
let the evolution of points on the curve starting at initial conditions $(x_{1,0},x_{2,0})=(x_{1,0},f(x_{1,0}))$,  evolves according to the linear flow,
\begin{equation}
(x_1(s,t),x_2(s,t))=(s e^{\lambda_1 t}, f(s) e^{\lambda_2 t}),
\end{equation}
taking $s=x_{1,0}$ to be the chosen parameterization of the curve in terms of the initial $x_1$-position of a point on the curve.  See  Fig.~\ref{FoliationAndCurves1}.
Then using the standard curvature computation of a two-dimensional parameterized curve yields,
\begin{equation}
k(s,t)=\frac{|x_1'x_2''-x_2'x_1''|}{(x_1'^2+x_2'^2)^{3/2}}=\frac{e^{(\lambda_2-2 \lambda_1)t}|f''(s)|}{[1+f'(s)^2 e^{2(\lambda_2-\lambda_1)t}]^{3/2}}.
\end{equation}

\begin{figure*}%[htb!]
  \centering            
   \subfloat[]{\label{fig:tiger}\includegraphics[width=1.05 \textwidth]{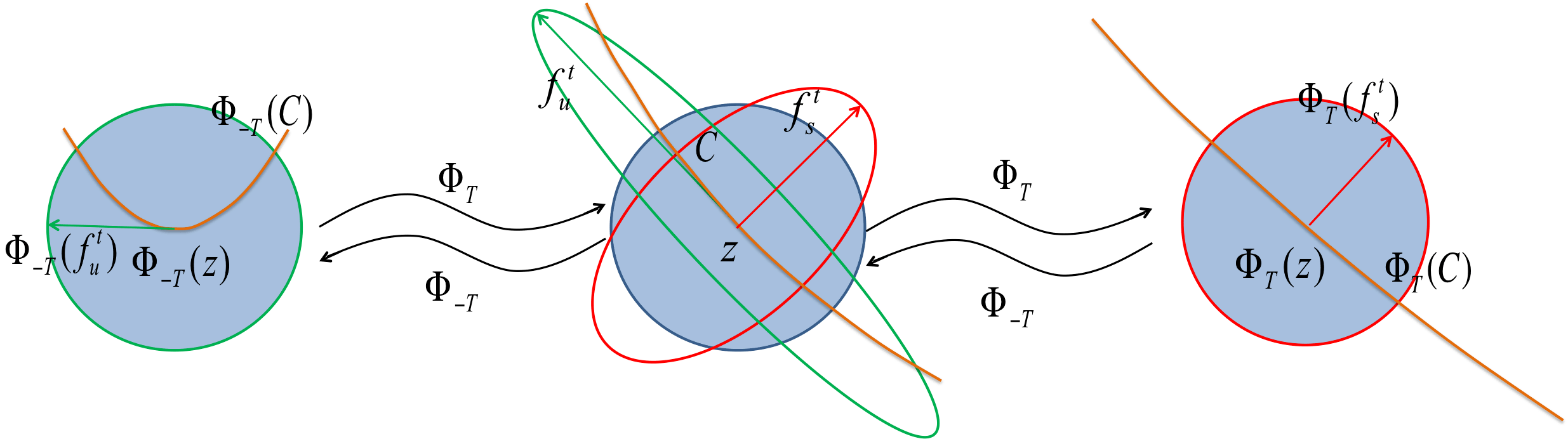}}  \\
   \subfloat[]{\label{fig:tiger}\includegraphics[width=1 \textwidth]{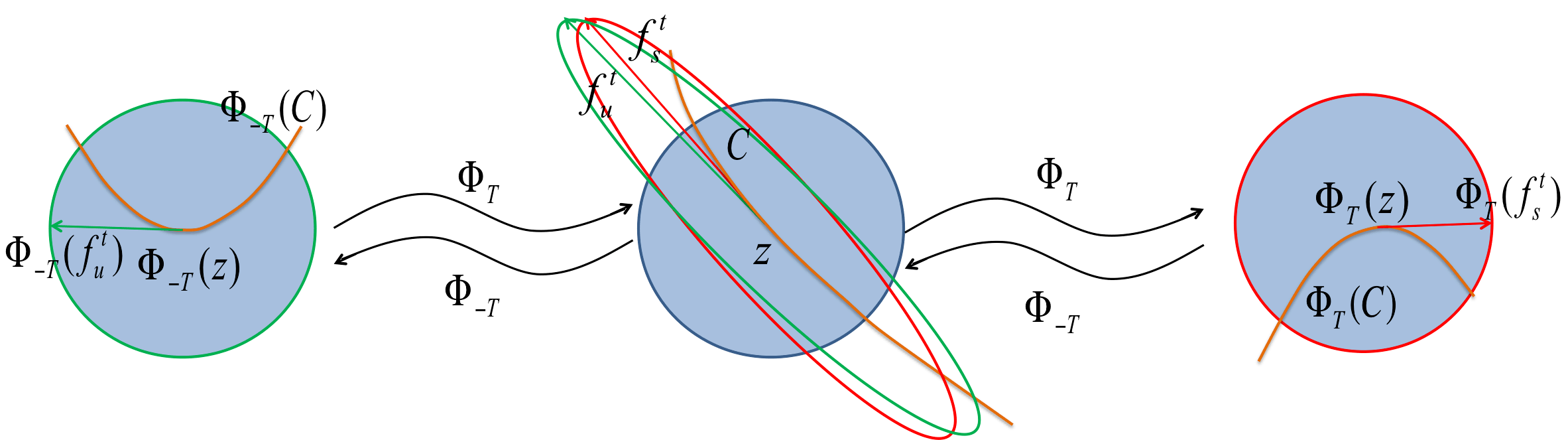}}  
  \caption{ (a) A curve $C$ goes through a small neighborhood of a point $z$ with $90$ degree foliations angle changes its shape from time $-T$ to $T$.  Notice that the curve changes its curvature significantly in time, and it can increase or decrease curvature depending on the details of how the curve is oriented relative to $f^t_s(z)$ and $f^t_u(z)$; (b) The same curve $C$ but with {\it almost} zero-splitting foliations roughly keeps its shape as noted by inspecting the curvature at $z$ through time. }
  \label{FoliationAndCurves1}
\end{figure*}

Thus we may estimate asymptotically in long time,
\begin{equation}
k(s,t)\approx e^{(\lambda_1-2\lambda_2)t}|f''(s)f'(s)^{-3}|, \mbox{when } t>>1
\end{equation}
where long time is interpreted as $t>0$ when $e^{2(\lambda_1-\lambda_2)t}<f'(s)^2$.  Conversely, asymptotically in short time, 
\begin{equation}
k(s,t)\approx e^{(\lambda_2-2\lambda_1)t}|f''(s)|, \mbox{when } t<<1,
\end{equation}
where short time is interpreted as $e^{2(\lambda_2-\lambda_1)t}<f'(s)^{-2}$.
We can interpret that 
since we have assumed that $\lambda_2>0>\lambda_1$, then in short time curvature grows, but after reaching a maximum, in long time curvature shrinks. Notably add to this story that points of large initial curvature due to large $f''(s)$ and small $f'(s)$ persist longer in growing stage of the curvature as noted by $e^{2(\lambda_2-\lambda_1)t}<f'(s)^{-2}$ before transition to  $e^{2(\lambda_1-\lambda_2)t}<f'(s)^2$.  Thus a highly hyperbolic saddle structure suggests significant change of curvature which corresponds to significant changes in shape according to the Frenet-Serret theory.  

\item Scaling and rotation;

For a given time epoch $0<t<+\infty$ and a point $z=(x,y)$, we suppose the forward flow $F_t$ is

\[ F_t (z)= \left( \begin{array}{cc}
 \cos\alpha & -\sin \alpha \\
\sin \alpha & \cos \alpha \end{array} \right)
\left( \begin{array}{cc}
 a & 0 \\
0 & \frac{1}{a} \end{array} \right) z
\] 
and the backward transport matrix $B_t$ is 
\[ B_t (z)= \left( \begin{array}{cc}
 \cos\beta & -\sin \beta \\
\sin \beta & \cos \beta \end{array} \right) 
\left( \begin{array}{cc}
 b & 0 \\
0 &  \frac{1}{b}  \end{array} \right) z
\] 
where $\beta, \alpha>0$ and $a, b>1$, without loss of generality. 
See Fig. \ref{Appendix1} (a). 
Thus, the flow from $-t$ to $t$ can be considered as $B_t^{-1} F_t(z)$. 
And the deformation matrix of $B_t^{-1} F_t(z)$ clearly is the same as its coefficient matrix, which is,

\begin{figure*}%[htb!]
  \centering           
   \subfloat[]{\label{fig:tiger}\includegraphics[width=.9 \textwidth]{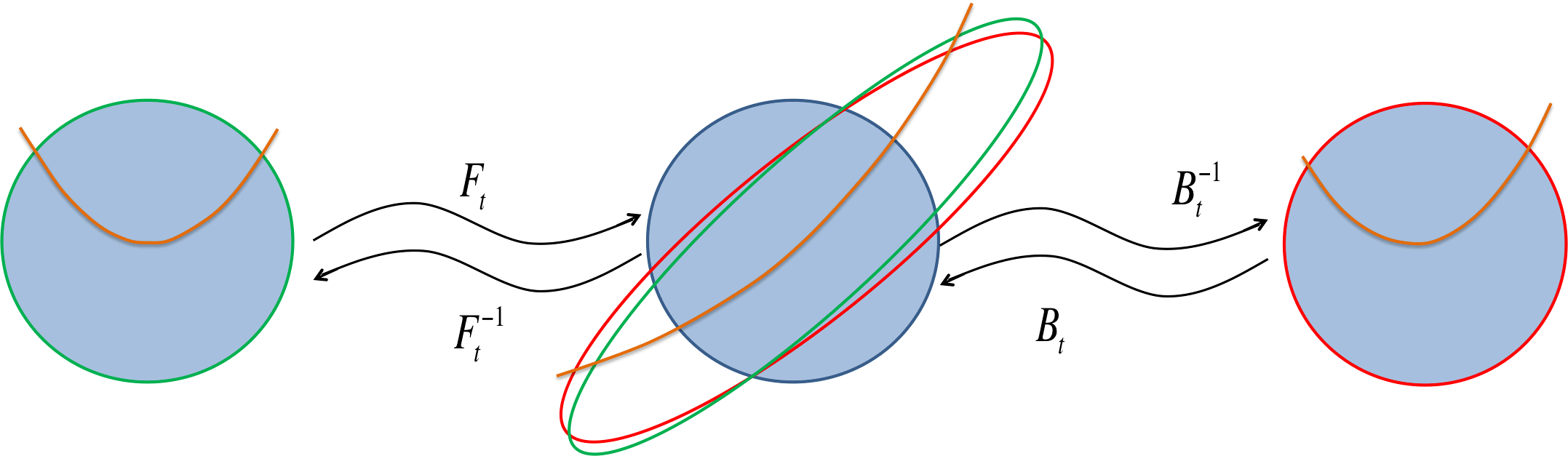}}   \\
   \subfloat[]{\label{fig:tiger}\includegraphics[width=1 \textwidth]{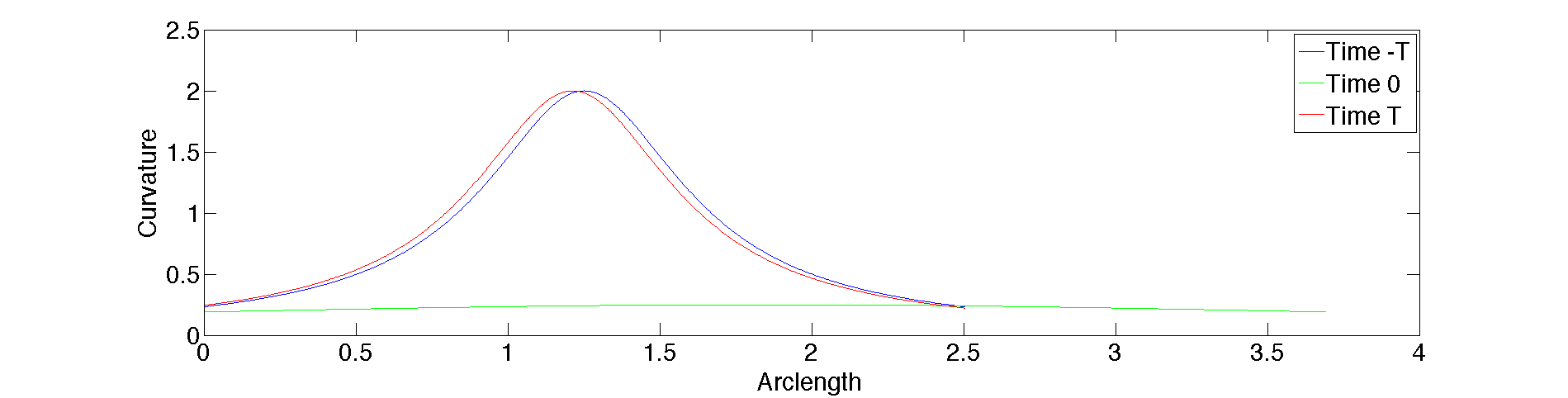}}  
  \caption{The rotation and scaling case. We see that curvature changes only slightly over the time epoch.}
  \label{Appendix1}
\end{figure*}

\[ Q= B_t^{-1} F_t =\left( \begin{array}{cc}
 \frac{1}{b} & 0 \\
0 & b \end{array} \right)
\left( \begin{array}{cc}
 \cos \theta & -\sin \theta \\
\sin \theta & \cos \theta \end{array} \right)
\left( \begin{array}{cc}
 a & 0 \\
0 &  \frac{1}{a}  \end{array} \right)
=\left( \begin{array}{cc}
  \frac{a}{b} \cos \theta & - \frac{1}{ab}  \sin \theta \\
ab \sin \theta &  \frac{b}{a} \cos \theta \end{array} \right)
\] 
where $\theta=mod(\alpha-\beta, \pi/2).$ 
And it is easy to show that $\theta$ is the included angle of the stable and unstable foliations. 
We next show that how the angle $\theta$ related to the change of curvature of a given curve under the flow from $-t$ to $t$.

Consider a curve $\gamma_{-t}=(x_{-t},f(x_{-t}))$ at time $-t$, under the flow through a time interval $[-t, t]$. 
The curvature of $\gamma_{-t}$ at $-t$ is known as, 
\begin{eqnarray}
k(\gamma_{-t})=\frac{|f''(x_{-t})|}{(1+f'^2(x_{-t}))^{\frac{3}{2}}}
\end{eqnarray}
However, the curvature is changed by the flow $B_t^{-1} F_t(z)$ to, 
\begin{eqnarray}
k(\gamma_{t})=\frac{|f''(x_{-t})|}{((\frac{a}{b} \cos \theta - \frac{1}{ab} f'^2(x_{-t}) \sin \theta )^2 
+(ab \sin \theta+\frac{b}{a} f'^2(x_{-t}) \cos \theta)^2)^{\frac{3}{2}}}
\end{eqnarray}
The ratio between the two curvatures can be written,
\begin{eqnarray}
\frac{k(\gamma_{-t})}{k(\gamma_{t})}=\frac{(\frac{a}{b} \cos \theta - \frac{1}{ab} f'^2(x_{-t}) \sin \theta )^2 
+(ab \sin \theta+\frac{b}{a} f'^2(x_{-t}) \cos \theta)^2}{1+f'^2(x_{-t})}.
\end{eqnarray}
The Taylor expension of the ratio $\frac{k(\gamma_{-t})}{k(\gamma_{t})}$ with respect to small angle $\theta$ is, 
\begin{eqnarray}
\frac{k(\gamma_{-t})}{k(\gamma_{t})}=
\frac{\frac{a^2}{b^2}+\frac{b^2f'^2(x_{-t})}{a^2}}{1+f'^2(x_{-t})}+\frac{2f'(x_{-t})(b^2-\frac{1}{b^2})}{1+f'^2(x_{-t})} \theta + O(\theta ^2)
\end{eqnarray}
Thus, for $\theta<<1$, if the flow has $a \approx b$, 
we have $\frac{k(\gamma_{-t})}{k(\gamma_{t})} \approx 1$ for all $x_{-t}$ of the curve. 
See Fig. \ref{Appendix1} (b). 
The curvatures nearly no change through time interval $[-t, t]$.
Note that there may be some special $x_{-t}$ such that this ratio may be close to $1$ even without a small $\theta$. 

\item Shear;

\begin{enumerate}

 \item If we have 
\[ F_t= \left( \begin{array}{cc}
 1 & a \\
0 & 1 \end{array} \right)
\] 
and $B_t$ is 
\[ B_t= \left( \begin{array}{cc}
 1 & b \\
0 & 1 \end{array} \right)
\]
then follows 
\[ Q=B_t^{-1} F_t= \left( \begin{array}{cc}
 1 & -b \\
0 & 1 \end{array} \right)
\left( \begin{array}{cc}
 1 & a \\
0 & 1 \end{array} \right)
\] 

By the same process of case 1, we have the curvatures ratio as,
\begin{eqnarray}
\frac{k(\gamma_{-t})}{k(\gamma_{t})}=\frac{ (1+(a-b)f'(x_{-t}))^2+f'^2(x_{-t})}{1+f'^2(x_{-t})}.
\end{eqnarray}
Thus,  $a\approx b$  is necessary and sufficient for the angle between foliations to be small, 
and from this follows that the the ratio $\frac{k(\gamma_{-t})}{k(\gamma_{t})} \approx 1$. 
See Fig. \ref{Appendix2}.

%Also, if we consider the singular value and singular vector again,
%the characteristic polynomial of $CC^T$ is 
%\begin{eqnarray}
%| CC^T - \lambda I| = \lambda^2-(2-(a-b)^2) \lambda +1
%\end{eqnarray}

\begin{figure*}%[htb!]
  \centering           
   \subfloat[]{\label{fig:tiger}\includegraphics[width=.9 \textwidth]{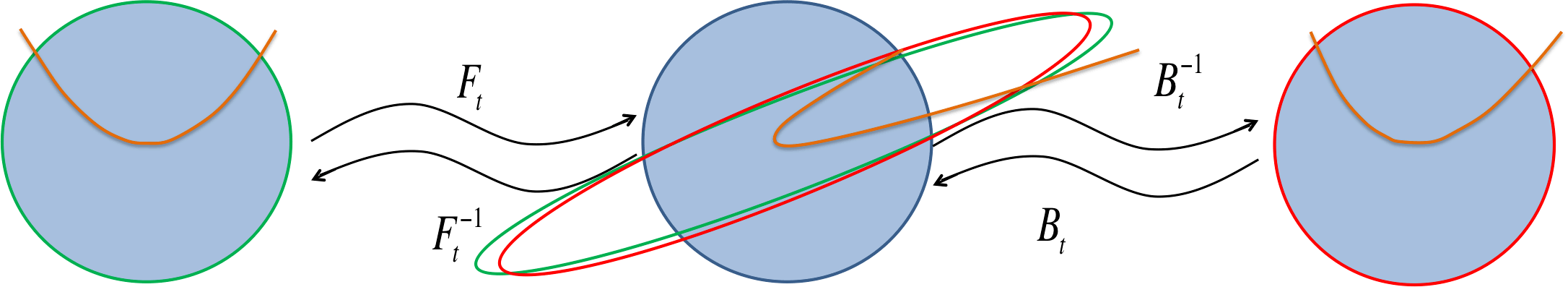}}   \\
   \subfloat[]{\label{fig:tiger}\includegraphics[width=1 \textwidth]{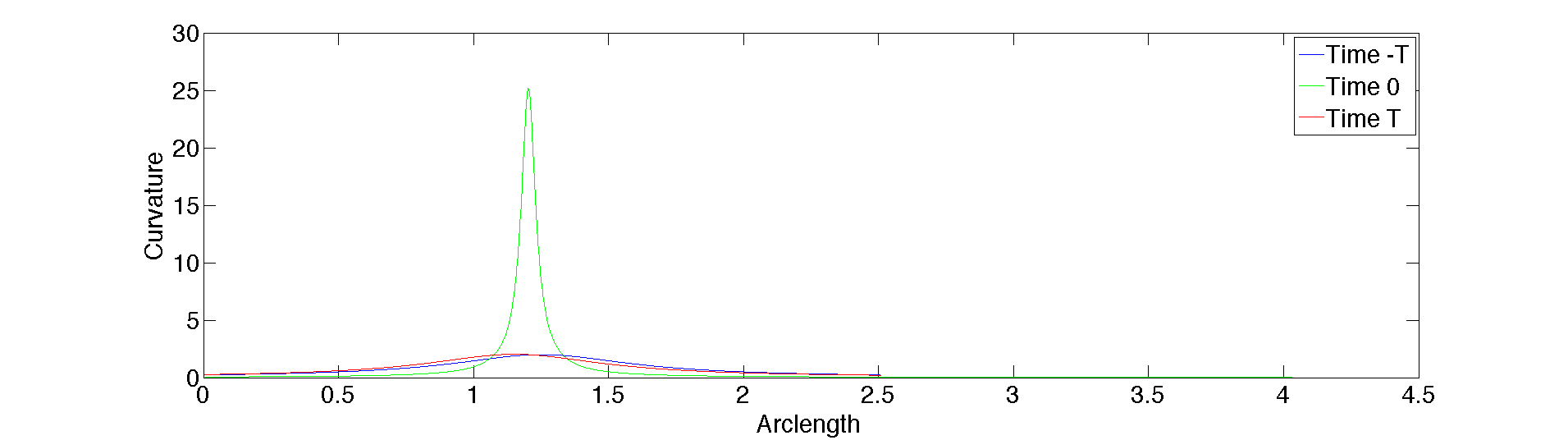}}  
  \caption{The shear case.  Again curvature changes only slightly over the time epoch.}
  \label{Appendix2}
\end{figure*}

%We know for the shearing map, the included angle between singlar vectors depends on $a$ and $b$ and vice vesus,
%so $a=b \Leftrightarrow \theta=0$. 
%
\item If we have 
\[ F_t= \left( \begin{array}{cc}
 1 & 0 \\
a & 1 \end{array} \right)
\] 
and $B_t$ is 
\[ B_t= \left( \begin{array}{cc}
 1 & b \\
0 & 1 \end{array} \right)
\]. 
then, 
\[ Q=B_t ^{-1} F_t= \left( \begin{array}{cc}
 1 & -b\\
 0 & 1 \end{array} \right)
\left( \begin{array}{cc}
 1 & 0 \\
a & 1 \end{array} \right)
\] 

In this case, we can see the angle between foliations never reaches $0$ unless $a=b=0$. 
However, a small angle can still can keep the curvature relatively constant.
Consider the ratio between curvatures,
\begin{eqnarray}
\frac{k(\gamma_{-t})}{k(\gamma_{t})}=\frac{ (1-ab-bf'(x_{-t}))^2+(a+f'(x_{-t}))^2}{1+f'^2(x_{-t})},
\end{eqnarray}
Hence, if we have $a\approx 0$ and $b\approx 0$, the foliations' angle is small and the ratio $\frac{k(\gamma_{-t})}{k(\gamma_{t})} \approx 1$.

%On the other hand, 
%the characteristic polynomial of $CC^T$ is 
%\begin{eqnarray}
%| CC^T - \lambda I| = \lambda^2-((a-b)^2+2+a^2b^2) \lambda +1
%\end{eqnarray}
%In this case, $a=b=0 \Leftrightarrow \theta=0$.  
%So, for both case, $\theta=0$ means no change from backward to forward.
%If $a \approx b$ which means $\theta \approx 0$ is also the biggest try to keep the shape.
\end{enumerate}

\end{enumerate}

\section{Curves of Stable and Unstable Foliations Tangencies}\label{seccurvesofstableunstable}

Motivated by these linear hyperbolic and non hyperbolic dynamics, we consider a nonlinear flow and how it evolves a complete curve of points $z$, such that each point on the curve locally has a tangency scenario as above.  Therefore we should expect that at each point on such a curve, curvature will change slowly.  We will highlight the difference between  hyperbolicity and nonhyperbolicity in terms of evolution of curves by showing a non hyperbolic curve and contrast with a nearby hyperbolic curve.  A question which follows this discussion is if there is a curve that only consists of  points with a zero-splitting angle, i.e. a zero-splitting curve, is a set of points $C$ such that,
\begin{equation}
\mbox{If }z\in C \mbox{ then } \theta(z,t)=0,
\end{equation}
The answer is yes. 
In this section we will not discuss construction as that will be covered in  Sec.~\ref{continuation1} to follow.

Fig. \ref{FoliationAndCurves4} shows the comparison between a zero-splitting curve and a nearby general curve $\tilde{C}$, 
We show time evolution of $C$  from time $-T$ until time $T$ and correspondingly that its curvature changes only slightly.  However,  even a nearby curve $\tilde{C}$ is shown and significant changes in curvature develops in the same time epoch.  Correspondingly we see that $C$ evidently encloses a shape coherent set, but $\tilde{C}$ does not.  It may seem surprising that a small displacement of 
$C$ produces such a large change of evolution of curvature, but this preludes the answer to this surprise which relates to the geometry that finite time stable and unstable foliations can change direction quite rapidly, even in small neighborhoods, as suggested by Figs.~\ref{RW1}-\ref{RWAngles} in Sec.~\ref{examples}.
This is agreeable with the traditional
 well known infinite time concept of stable manifolds accumulating on unstable manifolds, known as the lambda lemma, \cite{lambdalemma}. 
Thus motivated, in the next section we will discuss a continuation algorithm  based on the implicit function theorem to construct curves $C$ of zero-splitting.

\begin{figure*}%[htb!]
  \centering            
   \subfloat[A zero-splitting curve versus a general curve]{\label{fig:tiger}\includegraphics[width=1 \textwidth]{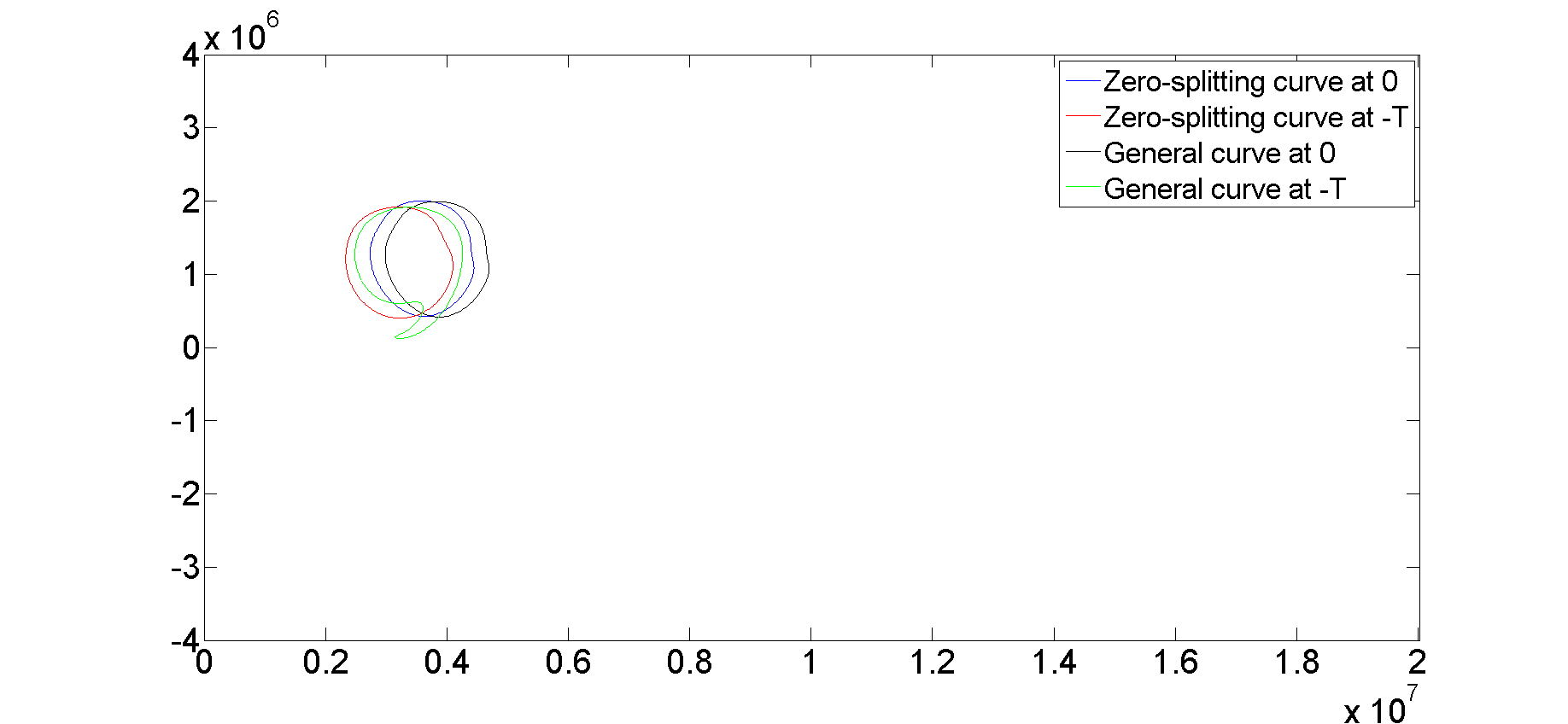}}  \\
   \subfloat[]{\label{fig:tiger}\includegraphics[width=1 \textwidth]{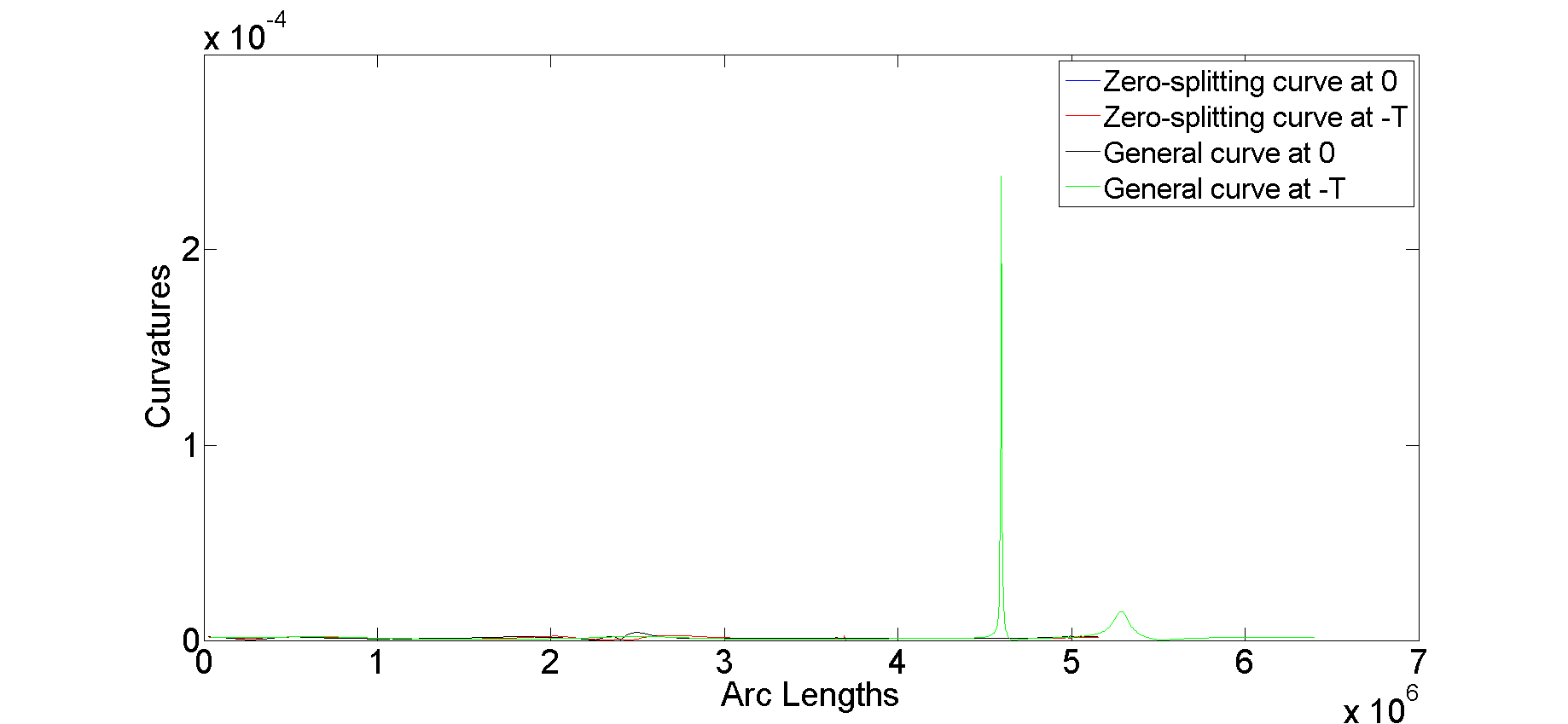}}  
  \caption{(a) The blue curve is a zero-splitting curve $C$ at time $0$ and the black curve is a general curve $\tilde{C}$ which is a slightly shifted version of the blue curve. We can see that at time $-T$, comparing to the general curve, the zero-splitting curve roughly keeps its curvatures, and arc length better. (b) In a curvature as a function of arc length, $k(s)$, notice that the general curve (black curve above) curvature changes dramatically, as seen by the green bump. Likewise, the green curve extends on x-axis far from the other three, meaning that the arc length of the general curve grows much more than the zero-splitting curve.  }
  \label{FoliationAndCurves4}
\end{figure*}

\section{On Continuation Curves of Zero-Splitting}\label{continuation1}

%By Lemma \ref{LemmaC1}, we already know conditions such that the angle function $\theta(z, t)$ is at least $C^1$, 
%permits the discussion here that points with zero-splitting (zero angle) can be defined and constructed as curves. 
We enlist the implicit function theorem for proof of existence of  curves of zero-splitting and construction.  
We cite the planar implicit function theorem as follows, as the planar version is sufficient for the purposes of this paper.

%\begin{theorem}[Implicit Function Theorem]\cite{M}
%Let $A$ be an open set in $\mathbb{R}^{n+k}$ and let $g: A \to \mathbb{R}^n$ be a $C^r$ function. 
%Write $g$ in the form $g(x,y)$, where $x$ and $y$ are elements of $\mathbb{R}^k$ and $\mathbb{R}^n$. 
%Suppose that $(a,b)$ is a point in $A$ such that $g(a,b)=0$ and determinant of the $n \times n$ matrix 
%whose elements are the derivatives of the $n$ component function of $g$ with respect to the $n$ variables, written as y,
%evaluated at $(a, b)$, is not equal to zero. 
%The latter may be rewritten as 
%{\begin{eqnarray}
%\mbox{rank} (D g(a,b))=n
%\end{eqnarray}}
%Then there exists a neighborhood $B$ of $a$ in $\mathbb{R}^k$ and a unique $C^r$ function 
%such that $h: B \to \mathbb{R}^n$ and $g(x,h(x))=0$ for all $x\in B$.
%\label{IFT}
%\end{theorem}

\begin{theorem}[Implicit Function Theorem]\cite{M}
If $F:E\rightarrow {\mathbb R}$ is continuously differentiable in a domain $E\subset {\mathbb R}$, and an equation $F(x,y)=c$ has some point $(x_0,y_0)\in E$ such that $F(x_0,y_0)=c$, and $\frac{\partial F}{\partial y}(x_0,y_0)\neq 0$, then there exists a neighborhood $U\subset E$ of $x_0$ and a function $y=y(x)$ in this neighborhood such that $y(x_0)=y_0$ and $F(x,y(x))=c$ for all $x\in U$.
\label{IFT}
\end{theorem}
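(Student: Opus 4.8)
The plan is to prove this planar version by the elementary monotonicity-plus-intermediate-value argument rather than the heavier contraction-mapping route, since with a single dependent variable the scalar structure makes every step explicit. (I read the hypothesis as $E\subset\mathbb{R}^2$ with $F$ a function of the two variables $(x,y)$; the ``$E\subset\mathbb{R}$'' appears to be a typo.) First I would reduce to the case $\frac{\partial F}{\partial y}(x_0,y_0)>0$: if instead this partial derivative is negative, replace $F$ by $-F$ and $c$ by $-c$, which leaves the level set $\{F=c\}$ and all hypotheses unchanged.

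The main construction then proceeds as follows. Because $F$ is continuously differentiable, $\frac{\partial F}{\partial y}$ is continuous, so there is a closed rectangle $R=[x_0-a,x_0+a]\times[y_0-b,y_0+b]\subset E$ on which $\frac{\partial F}{\partial y}>0$. Hence for each fixed $x\in[x_0-a,x_0+a]$ the map $y\mapsto F(x,y)$ is strictly increasing on $[y_0-b,y_0+b]$. At $x=x_0$ this yields $F(x_0,y_0-b)<c=F(x_0,y_0)<F(x_0,y_0+b)$. Using continuity of $F$ in the first variable, I would shrink $a$ to a smaller radius, producing an interval $U$ about $x_0$, so that the strict inequalities $F(x,y_0-b)<c<F(x,y_0+b)$ persist for \emph{every} $x\in U$. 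For each such $x$, the intermediate value theorem applied to the continuous, strictly increasing function $y\mapsto F(x,y)$ produces exactly one $y\in(y_0-b,y_0+b)$ with $F(x,y)=c$; defining $y(x)$ to be this value gives the required function, and $y(x_0)=y_0$ follows from uniqueness at $x_0$.

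It remains to upgrade this pointwise-defined function to a regular one. Continuity of $y(\cdot)$ follows by repeating the straddling argument on a narrow window $[y(x_1)-\eta,\,y(x_1)+\eta]$ about any $x_1\in U$: for $x$ sufficiently close to $x_1$ the value $y(x)$ is trapped in that window, so $|y(x)-y(x_1)|<\eta$. For differentiability — the usual full conclusion, although the statement as written asks only for existence — I would start from $0=F(x+h,y(x+h))-F(x,y(x))$, expand each increment by the mean value theorem in each variable, solve for $\frac{y(x+h)-y(x)}{h}$, and pass to the limit using the already-established continuity of $y$ together with $\frac{\partial F}{\partial y}\neq 0$, obtaining $y'(x)=-\frac{\partial F/\partial x}{\partial F/\partial y}$. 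The step I expect to require the most care is making the intermediate-value construction \emph{uniform} in $x$: choosing $a$ small enough that the endpoint values $F(x,y_0\pm b)$ remain on opposite sides of $c$ simultaneously for all $x\in U$, since this is exactly what makes $y(x)$ well defined on a full neighborhood rather than only at the base point $x_0$. Everything downstream is routine once that uniformity is secured.
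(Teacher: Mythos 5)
The paper does not actually prove this statement: it is quoted (with the typo $E\subset\mathbb{R}$, which should read $E\subset\mathbb{R}^2$, as you correctly surmise) from Munkres's \emph{Analysis on Manifolds} and used as a black box to derive the Continuation Theorem that follows it. So there is no in-paper argument to compare against, and the relevant question is only whether your proof is sound. It is: the reduction to $\frac{\partial F}{\partial y}(x_0,y_0)>0$ by passing to $-F$, the choice of a closed rectangle on which $\frac{\partial F}{\partial y}>0$ so that $y\mapsto F(x,y)$ is strictly increasing, the uniform straddling $F(x,y_0-b)<c<F(x,y_0+b)$ for all $x$ in a shrunken interval $U$, the intermediate value theorem producing a unique $y(x)$ in $(y_0-b,y_0+b)$, the local re-straddling argument for continuity of $y(\cdot)$, and the mean-value-theorem computation of $y'(x)=-\frac{\partial F/\partial x}{\partial F/\partial y}$ are all correct, and you rightly identify the uniformity of the straddling inequalities as the step that makes $y(x)$ well defined on a full neighborhood rather than only at $x_0$. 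It is worth noting the contrast with the route the cited reference takes: Munkres proves the implicit function theorem in $\mathbb{R}^{n+k}$ as a corollary of the inverse function theorem, which rests on a contraction-mapping argument; your monotonicity-plus-IVT proof is more elementary and entirely self-contained, but it exploits the order structure of $\mathbb{R}$ in the single dependent variable and so does not generalize beyond one equation in one unknown. For this paper's use case --- the scalar angle function $\theta(z,t)$ of two real variables, whose zero level sets are continued via $dy/dx=-\theta_x/\theta_y$ --- your two-dimensional argument is exactly sufficient, and the derivative formula you obtain is precisely the continuation ODE the paper integrates numerically.
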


If the vector field has enough regularity to ensure that the function $\theta(z, t)$ is at least $C^1$, then by the implicit function theorem, we have the following,

\begin{theorem}[Continuation Theorem] \label{continuation} 
 The set of $z=(x,y)$ with $\theta(z, t)=0$ are a set of $C^1$ curves, which can be written as $C^1$ functions such as $y=g_1(x)$ or $x=g_2(y)$ of a finite $t$, 
which depends on $\theta_y \ne 0$ or $\theta_x \ne 0$.  Furthermore, $dy/dx=-\theta_x/\theta_y$, for a given initial condition $z_0$ has a solution $g_1$ or likewise $dx/dy=-\theta_y/\theta_x$ respectively
\label{Continuation}
\end{theorem}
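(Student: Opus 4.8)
The plan is to read this statement as an essentially immediate consequence of the planar implicit function theorem (Theorem~\ref{IFT}), once the $C^1$ regularity of the angle function is secured. First I would freeze the finite time $t$ and set $F(x,y):=\theta((x,y),t)$, the included‑angle field of Eq.~(\ref{AngleFunction}) regarded now as a scalar function on the planar domain $\Omega\subset\R^2$. Under the standing hypothesis that the vector field is regular enough for $\theta(z,t)$ to be $C^1$, the partials $\theta_x=\partial F/\partial x$ and $\theta_y=\partial F/\partial y$ exist and are continuous, and the object of interest is the zero level set $\mathcal{Z}:=\{(x,y):F(x,y)=0\}$.

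The core step is a pointwise invocation of the implicit function theorem. At any $z_0=(x_0,y_0)\in\mathcal{Z}$ with $\theta_y(z_0)\neq 0$, Theorem~\ref{IFT} applied to $F(x,y)=0$ produces a neighborhood $U$ of $x_0$ and a $C^1$ function $y=g_1(x)$ with $g_1(x_0)=y_0$ and $F(x,g_1(x))=0$ on $U$, so that $\mathcal{Z}$ coincides locally with the graph of $g_1$. Differentiating the identity $F(x,g_1(x))=0$ gives $\theta_x+\theta_y\,g_1'(x)=0$, hence $dy/dx=g_1'(x)=-\theta_x/\theta_y$, exactly the claimed continuation ODE. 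The symmetric argument at points where $\theta_x\neq 0$ yields a local graph $x=g_2(y)$ with $dx/dy=-\theta_y/\theta_x$. Covering $\mathcal{Z}$ by such charts and switching from the $g_1$‑description to the $g_2$‑description as a vertical tangent ($\theta_y=0$, $\theta_x\neq 0$) is approached exhibits $\mathcal{Z}$ as a union of $C^1$ curves near every such point. For the ``furthermore'' clause I would add that, because $\theta$ is $C^1$, the field $-\theta_x/\theta_y$ is continuous wherever $\theta_y\neq 0$, so the initial‑value problem $dy/dx=-\theta_x/\theta_y$, $y(x_0)=y_0$, has a local solution by Peano's theorem; its graph stays in $\mathcal{Z}$ since the derivative of $F$ along it, $\theta_x+\theta_y(dy/dx)$, vanishes identically. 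This reproduces the implicit‑function branch and doubles as the numerical continuation recipe used later.

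The main obstacle is the behavior of the defining function exactly where it vanishes, and here there are two layers. The standard one is the \emph{singular} set of $\mathcal{Z}$, the points where $\theta=0$ but $\nabla\theta=(\theta_x,\theta_y)=0$ simultaneously: there neither branch applies, $\mathcal{Z}$ need not be a manifold (it may self‑cross, pinch, or contain isolated points), and the slope $-\theta_x/\theta_y$ is the indeterminate $0/0$. The clean remedy is to require $0$ to be a regular value of $F$ on the region of interest, i.e. $\nabla\theta\neq 0$ along $\mathcal{Z}$, which is precisely what makes the conditional hypothesis ``$\theta_y\neq 0$ or $\theta_x\neq 0$'' hold at every relevant point; the conclusion is then a statement about the smooth part of the zero set.

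The subtler, paper‑specific layer that I expect to demand the most care is whether $\theta$ itself is genuinely $C^1$ at its zeros. Written via $\arccos$ as in Eq.~(\ref{AngleFunction}), the derivative of $\arccos(u)$ is $-1/\sqrt{1-u^2}$, which blows up as the normalized inner product $u\to\pm1$ — and $u\to 1$ is exactly the zero‑splitting regime where the foliations become parallel. Thus the naive $\arccos$ expression fails to be $C^1$ at the very points we study. The fix is to take as the defining function the normalized cross product of the foliation directions (equivalently $\sin\theta$, or the signed acute angle $\arcsin$ of that cross product, consistent with the range $[-\pi/2,\pi/2]$ declared for $\theta$): this quantity shares the zero set with $\theta$, vanishes transversally through a tangency, and is $C^1$ there provided the foliations $f_s^t,f_u^t$ are $C^1$. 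I would therefore phrase the continuation argument in terms of this surrogate function, to which the implicit‑function and Peano steps above apply without the arccosine singularity.
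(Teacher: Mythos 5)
Your proposal is correct and follows the same route as the paper, which gives no separate proof beyond the remark that the statement follows from the planar implicit function theorem once $\theta(\cdot,t)$ is $C^1$: your pointwise IFT charts, the implicit differentiation yielding $dy/dx=-\theta_x/\theta_y$, the Peano/uniqueness check that the ODE solution stays in the zero set, and the role-switching at vertical tangents are exactly the intended argument. Your closing observation goes beyond the paper and repairs a real defect in its setup: with $\theta=\arccos(\cdot)\ge 0$ as literally defined, the zero set consists of minima, so $\nabla\theta$ would vanish (or fail to exist, since $\arccos$ is not differentiable where its argument equals $1$) precisely where $\theta=0$, and the hypothesis ``$\theta_x\neq 0$ or $\theta_y\neq 0$'' could never be met; replacing $\theta$ by a signed transversal surrogate such as the normalized cross product of $f_s^t$ and $f_u^t$ (consistent with the paper's declared but unexplained range $[-\pi/2,\pi/2]$) is the correct fix.
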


By theorems in Sec.~\ref{seccurvesofstableunstable}, these curves relate to shape coherence.
The following Algorithm offers a numerical method to obtain these boundaries.

%****
%{\bf May be we can say:} once we get all the curves, we use certain shape's curvature to fit those curve through $T$, 
%from forward case to backward case, the fittest one is what we want. 
%Or, we only fit the forward and backward cases to get more interesting structures.
%And the curvature's differnce can be considered as how coherent the shape are.
%Thm 3.3 can say the curves are candidates of the boudnaries of shape coherent sets. 

%%%%%%%%%%%%%%%%%%%%%%%%%%%%%%%%%%%%%%%%%%%%%%%%%%%%%%%%%%%%%%%%%%%%%%%%%%%%%%%%%
%                                                                                                                                3 Algorithm
%%%%%%%%%%%%%%%%%%%%%%%%%%%%%%%%%%%%%%%%%%%%%%%%%%%%%%%%%%%%%%%%%%%%%%%%%%%%%%%%%

%\section{Algorithm}

\subsection{Numerical Continuation}\label{numericalcont}
Theorem \ref{IFT}-\ref{Continuation} leads readily to a numerical continuation method to find  zero-splitting curves, by adaptation of the idea of continuation by using the differential equation from the implicit function theorem.
The implicit function theorem gives that solving the initial value problem, 
\begin{equation}\label{odef}
\frac{dy}{dx}=-\frac{\theta_x}{\theta_y}, (\mbox{or possibly } \frac{dx}{dy}=-\frac{\theta_y}{\theta_x} \mbox{ if tracking with respect to the other coordinate}), \space \space \theta(z_0,t)=0, 
\end{equation}
from a seed point $z_0(x_0,y_0)$ where $ \theta(z_0,t)=0$.
A solution of this initial value problem represents a subset segment of the curve of zero-splitting of the stable and unstable foliations.  Numerical continuation is a common theme in applied mathematics, and specifically it is used in dynamical systems in numerical bifurcation theory, \cite{continuationmethod}.  There are now many sophisticated algorithms for this purpose, and we will describe only the simplest rudimentary variation of such an algorithm here. 

In practice seeding the initial value problem requires finding at least one good point $(\tilde{z_0})$ which is near a zero point, which by continuity of $\theta(z,t)$, if $\|({z_0})-(\tilde{z_0})\|<\delta$, then $|\theta(\tilde{z_0},t)|<\epsilon$.  Such a point $(\tilde{z_0})$ for an initial rough threshold $\epsilon_1$ can be found by essentially a random search in the domain of interest, and then an improved seed representing a smaller threshold $0<\epsilon_2<\epsilon_1$, can be found by numerical optimization (root finding-like) algorithm applied to $ \theta(z,t)=0$ with that rough seed $\tilde{z_0}$, to the useful tiny threshold $\epsilon_2$.  For example, we choose $\epsilon_1=1 \times 10^{-2}$, and $\epsilon_2=1 \times 10^{-10}$.  Then a numerical ODE solver continues along solutions of Eq.~(\ref{odef}), but with the caveat that at each step of the numerical ODE solver, a corrective step must be taken by repetition of the root finding algorithm to the threshold $\epsilon_2$, as a search purely along the $y$ variable when solving along $\frac{dy}{dx}=-\frac{\theta_x}{\theta_y}$.  This together with the caveat that the roles of $x$ and $y$ can reverse when reaching singularities $\theta_x=0$ or $\theta_y=0$ representing points where the curve can become multiply valued as a function over $x$ (or $y$). So stating below as a prediction then correction type algorithm, which we describe as steps rather than a complete algorithm.

\begin{enumerate}

\item Find a rough seed in the domain $\tilde{z}_0\in \Omega$, for a given rough threshold, $\epsilon_1>0$, and let $n=1$.

\item An improved seed is found $z_n=(x_n,y_n)$, to the required precision $0<\epsilon_2<\epsilon_1$, such that $|\theta(z_n,t)|<\epsilon_2$ by the Trust-Region Dogleg Method, \cite{Sorensen}. 

\item Make a predictive step by Euler's method on Eq.~(\ref{odef}), $\tilde{y}_{n+1}=y_{n}+h -\frac{\theta_x}{\theta_y}(z_n,t)$, and $x_{n+1}=x_n+h$, for a chosen small $h>0$. (or $\tilde{x}_{n+1}=x_{n}+h -\frac{\theta_y}{\theta_x}(z_n,t)$ and  $y_{n+1}=y_n+h$ if the roles are reverse.) 

\item Make a corrective step by the root finder, $\tilde{z_0}\rightarrow z_0$ again to precision $\epsilon_2$ by the Trust-Region Dogleg Method but
this time while holding the dependent variable of the ODE constant.  (If solving $\frac{dy}{dx}=-\frac{\theta_x}{\theta_y}$ then $x$ is the active variable, and vice-versa if solving       $\frac{dx}{dy}=-\frac{\theta_y}{\theta_x}$ ).

\item If the right hand side of the differential equation develops a singularity, $\theta_x(z_n)\approx 0$, or approximately so, ($|\theta_x(z_n)|<\epsilon_3$ for some small $\epsilon_3>0$), (or $\theta_y(z_n)\approx 0$ depending on which of the two ODEs is currently being used in Eq.~(\ref{odef}),  then reverse the roles of $x$ and $y$, meaning begin tracking the other version of the continuation equation in Eq.~(\ref{odef}).

\item Repeat step 3 until a stopping criterion is reached.

\item Stopping criteria, keep repeating Step $4$ until it cannot find zero points in the trust region or find a zero points already exists.

\item Connect the zero splitting points from Step $4$, after Step $5$, we get a curve for one seed. 

\item Connecting gap criteria, if the gap is smaller than a given distance $l_{max}$, 
we just connect it and claim the internal region of these curves is a coherent structure candidate.

\end{enumerate}

Stopping criterion generally is the result of  the inability of the root finder to find a root to the required precision.  Variations of this could proceed by adaptively reducing step size $h$ and retrying the test step as possibly the loss of the curve is the reason of the inability to find a root and in such case the method could have stepped past a point of singularity requiring a role reversal. Note that such a phenomena is sensitive to step size and generally we choose small step sizes.  Because of the role of the corrector step, we have not been motivated to choose a higher order numerical ODE integrator but again we emphasize that there are many more sophisticated continuation algorithms available, \cite{continuationmethod}.
We have found that even this simple algorithm gives excellent smooth curves to high precision, and quickly, s evidenced by the examples, Figs.~\ref{RW2}-\ref{RW2nd2} and \ref{DG1}-\ref{DG2}.  Finally note that since there are generally many zero splitting curves, repeated initial seeding for Step 1 above can proceed by randomly choosing many initial conditions $\tilde{z}_0\in \Omega$, in an attempt to satisfy the chosen rough threshold, $\epsilon_1>0$.

%Notice that the algorithm is sensitive to the step size; if the step size is too big, points belong to different curves could be connected. 
%Hence, normally, we set a relatively small step size, for example, $10^{-4}$ for the domain of Double Gyre.
%Since the curves are $1$-D curves, we draw them by the ode solver for a given $x$ step or a $y$ step. 
%For the Euler's method, if the $dy/dx$ form slope is too large, for a small $x$ step, the next point will be too far from the present point, 
%which means we get a long straight line. 
%To avoid this case, once the slope for $x$ is exceed a given value, 
%we still use the ode solver but switching to $y$ step, since when $dy/dx$ is large, $dx/dy$ must be small.

%%%%%%%%%%%%%%%%%%%%%%%%%%%%%%%%%%%%%%%%%%%%%%%%%%%%%%%%%%%%%%%%%%%%%%%%%%%%%%%%%
%                                                                                                                     Example 1
%%%%%%%%%%%%%%%%%%%%%%%%%%%%%%%%%%%%%%%%%%%%%%%%%%%%%%%%%%%%%%%%%%%%%%%%%%%%%%%%%
\section{Examples}\label{examples}
In this section, we apply our methods to the Rossby wave system and the double gyre system, both of which have become benchmark examples for studying almost invariance, coherence and transport, 
\cite{FSM, FK,MB, BN}.

%
%Compared to the operator based coherent pairs method \cite{FSM} and relatively coherent sets method \cite{mb}, 
%our approach is not grid based and requires much less time to get much more details. 
%In addition, the method saves more computer memory in the computation. 
%{\bf Maybe we can talk about time comparison later in the examples once we finish optimizing our new code...}

\subsection{ An Idealized Stratospheric Flow}
Consider the Hamiltonian system
\begin{eqnarray}
dx/dt=-\partial \Phi / \partial y \nonumber \\
dy/dt=\partial \Phi / \partial x,
\end{eqnarray}
where
\begin{eqnarray}
\Phi(x,y,t)=c_3y&-&U_0Ltanh(y/L)   \\
&+&A_3U_0Lsech^2(y/L)cos(k_1x) \nonumber \\
 &+&A_2U_0Lsech^2(y/L)cos(k_2x-\sigma _2t) \nonumber \\
&+&A_1U_0Lsech^2(y/L)cos(k_1x-\sigma _1t)  \nonumber 
\label{ROWA}
\end{eqnarray}
This quasiperiodic system represents an idealized zonal stratospheric flow \cite{RBB,FSM}. 
There are two known Rossby wave regimes in the system. 
We will show two cases with different parameters.
\begin{enumerate}
\item 
Let $U_0=63.66, c_2=0.205U_0, c_3=0.7U_0, A_3=0.2, A_2=0.4, A_1=0.075$ and the other parameters in Eq. 
\ref{ROWA}  be the same as stated in \cite{RBB}. 
We set the time epoch $T=3 \ days$. 
By the parameters, we emphasize the main partition and small gaps between zero-splitting curves in the Rossby wave.

At first, we generate a uniform $2000 \times 200 $ grid of the domain $[0, 6.371\pi \times10^6] \times [-2.5\times10^6, 2.5\times 10^6]$. 
Fig. \ref{RW1} (a) shows the finite-time stable and unstable foliations for each of these $4\times 10^5$ points in the domain. 
Then we obtain amongst these, 3292 points with foliations' angles smaller than $\epsilon_1=10^{-2}$. 

Fig. \ref{RW2} (a) and (c) are the zero-splitting curve that result from the continuation algorithm in Sec.~\ref{numericalcont}, but then shown as evolved at different times, $T=-3 \ days$ and $T=3 \ days$. 
Notice the small gaps in the middle region can be connected, so we can get the middle partition of the zonal flow.
However, the small gaps indicate more details of the mixing behaviors of the flow.  For comparison we have included in Figs.\ref{RW2} (b) and \ref{RW2} (d) the results using the Frobenius-Perron operator based coherent pairs method \cite{FSM}.

\begin{figure*}%[htb!]
  \centering
            
   \subfloat[t=-3 \ days]{\label{fig:tiger}\includegraphics[width=.55 \textwidth]{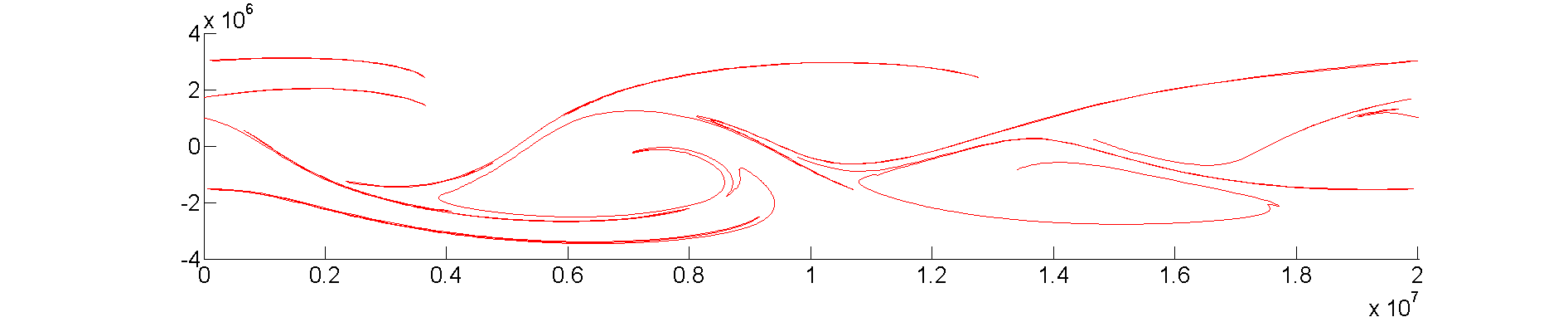}}   
   \subfloat[t=3 \ days]{\label{fig:tiger}\includegraphics[width=.55 \textwidth]{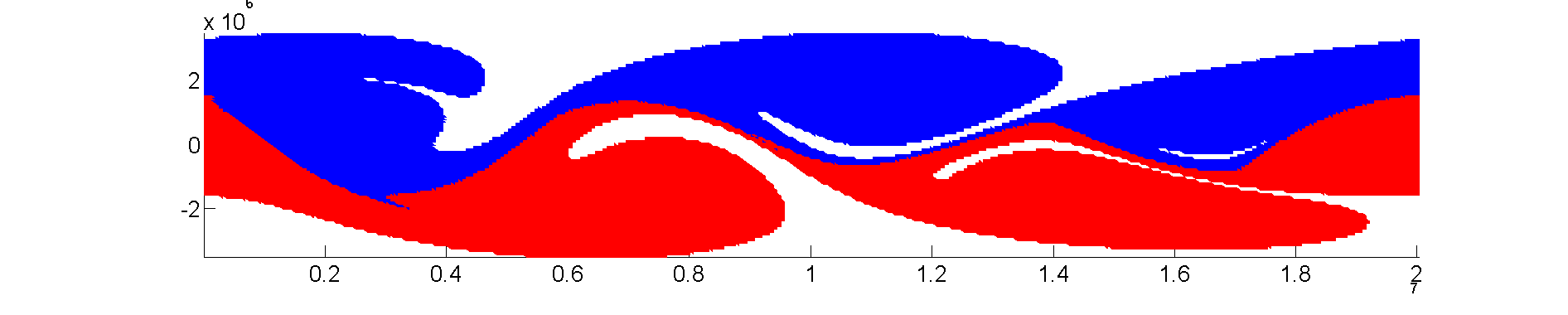}}  \\
   \subfloat[t=-3 \ days]{\label{fig:mouse}\includegraphics[width=.55\textwidth]{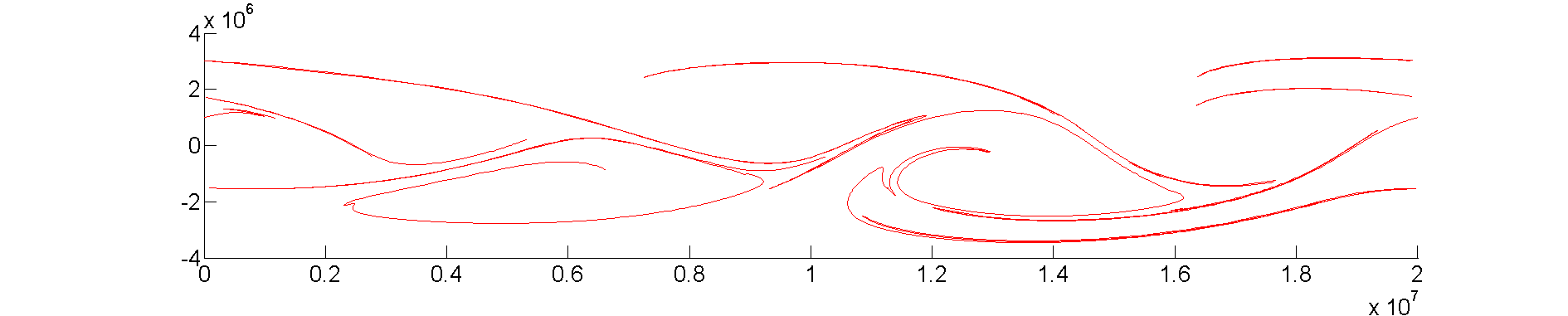}}  
   \subfloat[t=3 \ days]{\label{fig:mouse}\includegraphics[width=.55\textwidth]{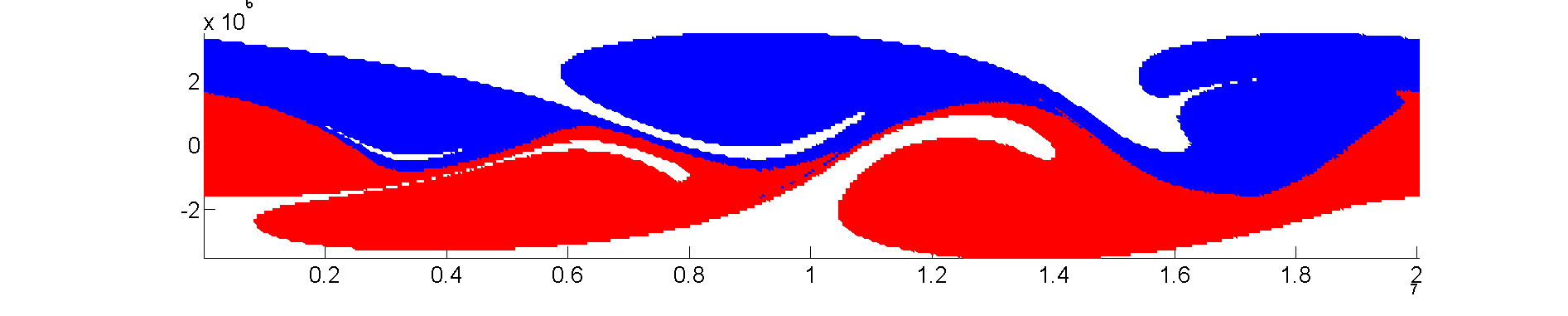}}  \\

  \caption{(a) and (c) are curves with zero splitting angles at different times, the small gaps between curves indicate the leaking points of the middle partition from coherent pair method, which is (b) and (d).}
  \label{RW2}
\end{figure*}

\item 
Let $U_0=44.31, c_2=0.205U_0, c_3=0.461U_0, A_3=0.3, A_2=0.4, A_1=0.075$ and the other parameters are the same as the above.
For this example, we change $T$ from $3 \  days$ to $5 \ days$.
We describe the whole partition and elliptic islands by the paramters.

We choose the same grid to seed initial conditions as above to apply our method. 
Figs. \ref{RW2nd1} (a) and (c) are the initial status and final status of the resulting zero-splitting curves. 
Notice the strong similarity here to the results shown in Figs. \ref{RW2nd1} (b) and (d) of the relatively coherent pairs method results from \cite{MB} that specialized \cite{FSM} to a hierarchical partition.
It is apparent that in addition to the larger scaled north-south barriers, that the interior elliptic island-like structures are also found by both methods.
Fig. \ref{RW2nd2} shows a movie of the zero-splitting curves, that illustrate directly that as time proceeds it is apparent that the shapes hold together as was the original motivation.  This is a visual presentation of the shape coherence.

\item

 It is interesting to inspect the  foliations geometry in more detail.
 Fig. \ref{RW1} (b) describes how angle changes with $x-$coordinates for a fixed $y-$coordinate.
We focus on a small region of the domain, Fig.~\ref{RW1} (c), and the angle function in this restricted domain Fig.~\ref{RW1} (d).
We believe that the fast switching behaviors between $0$ and $90$ degrees indicate a efficient mixing system.  Note that as the time epoch $T$ is increased, the angle function develops increasing variation, as the foliation switches direction more and more quickly. The eventual development in the limit of long time windows would be that the foliations would change direction infinitely many times in a finite sized small domain representing the behavior of the stable and unstable manifolds that are known to accumulate in horseshoe-like trellis structures in many common chaotic dynamical systems.  On the other hand, for small time epochs, we can study how zero-splitting of the foliation first develops. 
Fig. \ref{RWAngles} shows how angle changes by time $T$ for a fixed $y-$coordinate.  From a practical standpoint, we note how  those zeros of $\theta(z,t)$ that develop first seem to generally correspond to primary partitioning of the phase space, while those zeros that develop only at larger times seem to correspond to smaller-scaled shape coherent structures.

\end{enumerate}

\begin{figure*}%[htb!]
  \centering
            
   \subfloat[t=-5 \ days]{\label{fig:tiger}\includegraphics[width=.55 \textwidth]{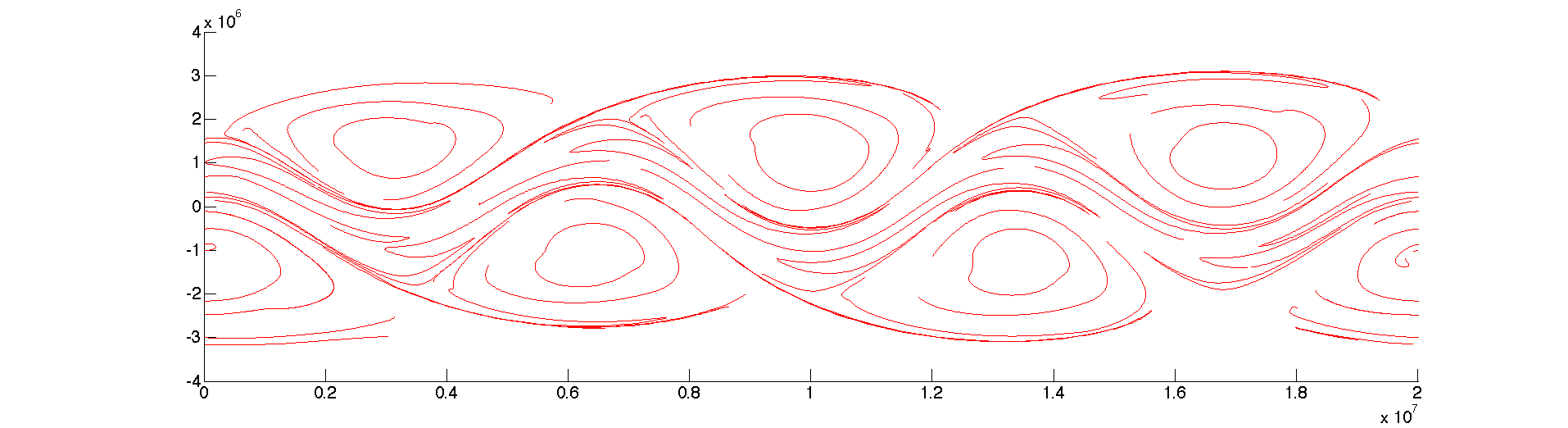}}   
   \subfloat[t=5 \ days]{\label{fig:tiger}\includegraphics[width=.55 \textwidth]{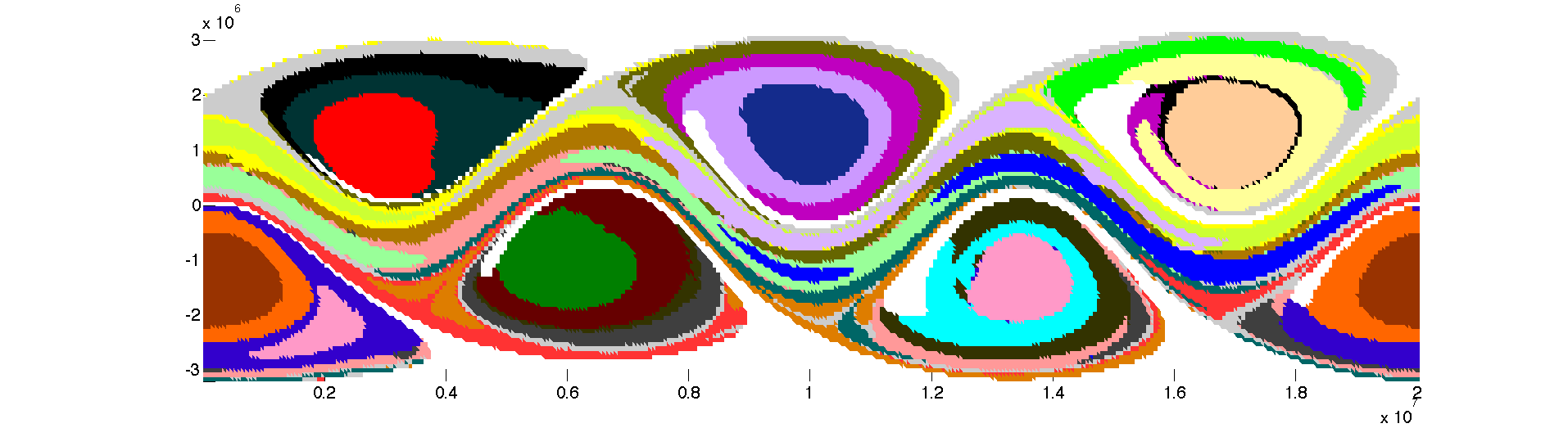}}  \\
   \subfloat[t=-5 \ days]{\label{fig:mouse}\includegraphics[width=.55\textwidth]{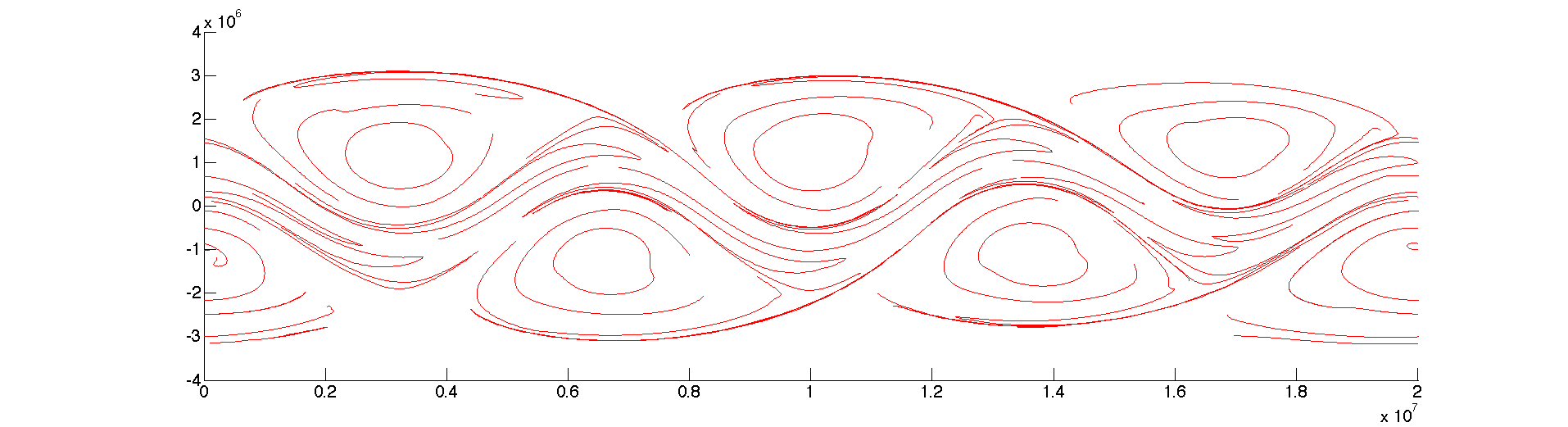}}  
   \subfloat[t=5 \ days]{\label{fig:mouse}\includegraphics[width=.55\textwidth]{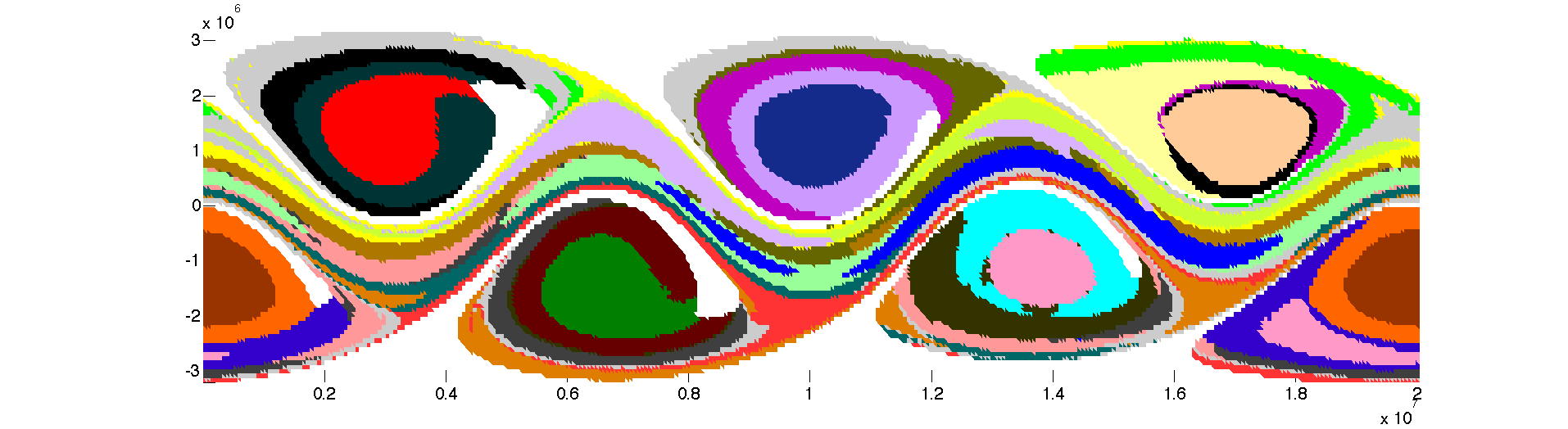}}  \\

  \caption{(a) and (c) are curves with zero splitting angles at different times, the elliptic islands are consistent with the results from coherent pair method, which is (b) and (d).}
  \label{RW2nd1}
\end{figure*}

\begin{figure}
  \centering           
  %\includemovie[poster, text={\small(Loading Foliations Movie.mp4)}]{15cm}{9cm}{RW2ndMovie.mp4}
  \subfloat[A snapshot of the movie]{\label{fig:tiger}\includegraphics[width=.55 \textwidth]{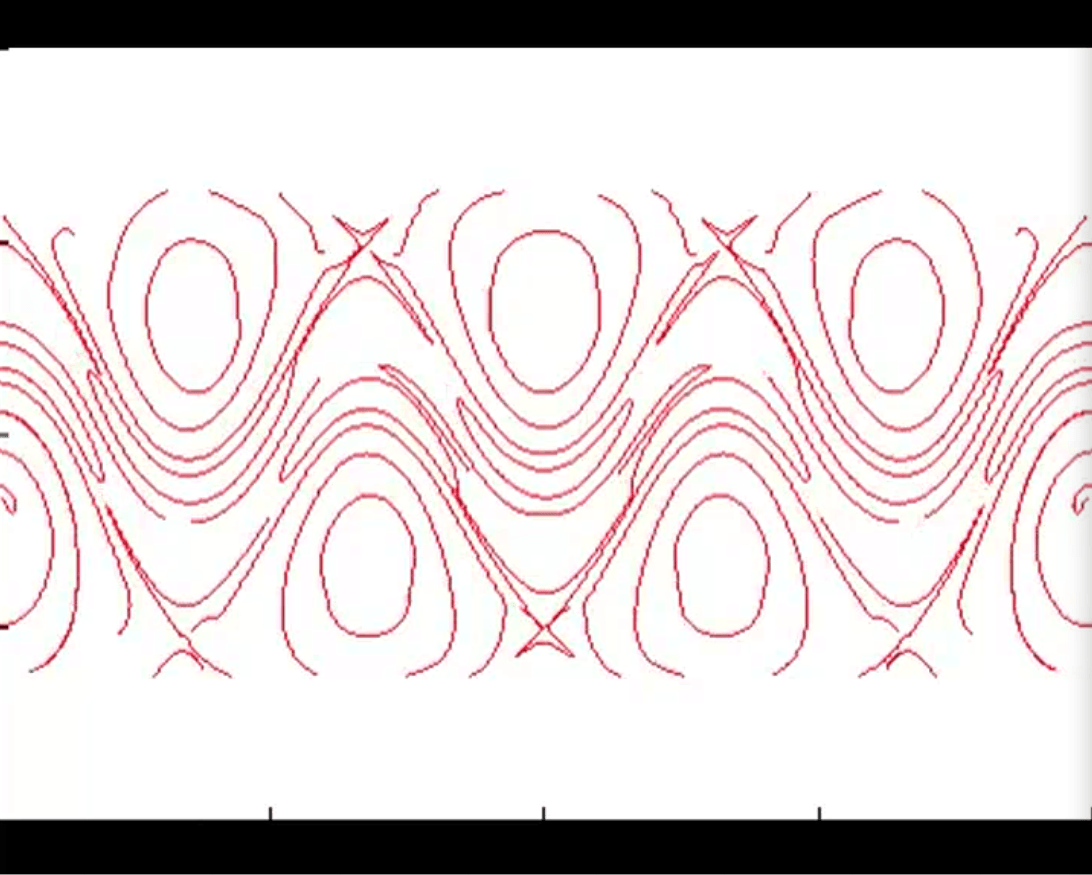}}  
  \caption{The movie of the stable and unstable foliations with non-hyperbolic splitting for zonal flow from $T=-5 \ days$ to $T= 5 \  days$.}
  \label{RW2nd2}
\end{figure}

\begin{figure*}%[htb!]
  \centering
   \subfloat[Stable and unstable foliations for whole domain]{\label{fig:tiger}\includegraphics[width=.9 \textwidth]{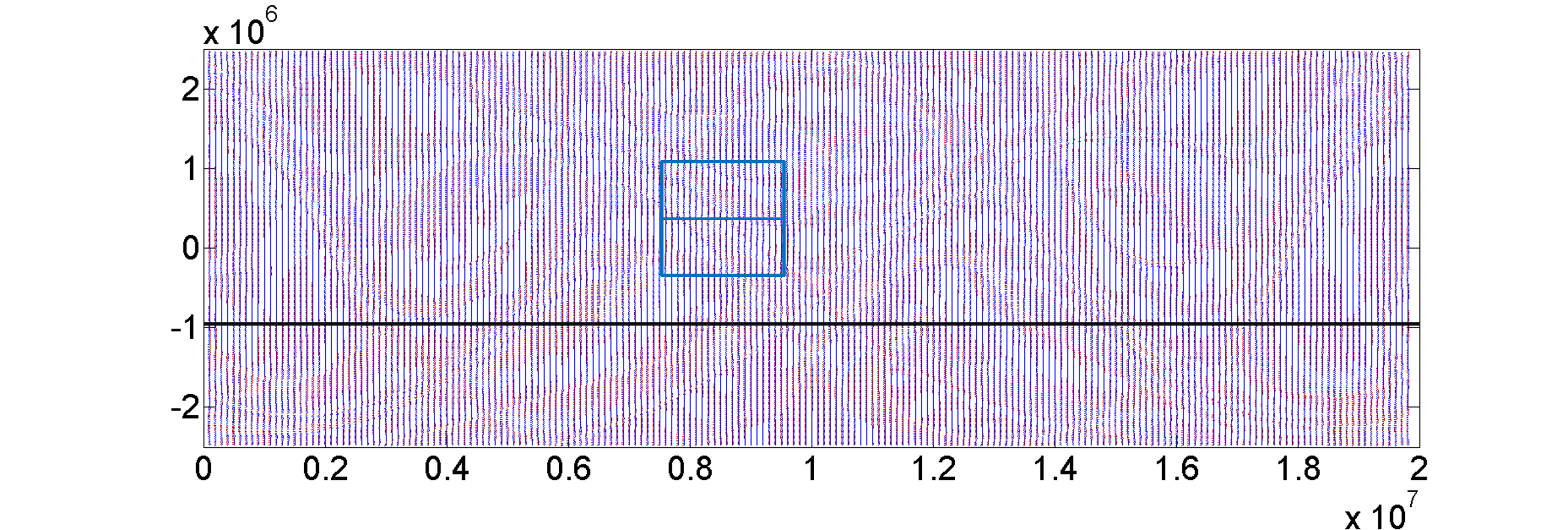}}   \\
   \subfloat[Angles vs $x$ for a fixed $y$]{\label{fig:tiger}\includegraphics[width=.9 \textwidth]{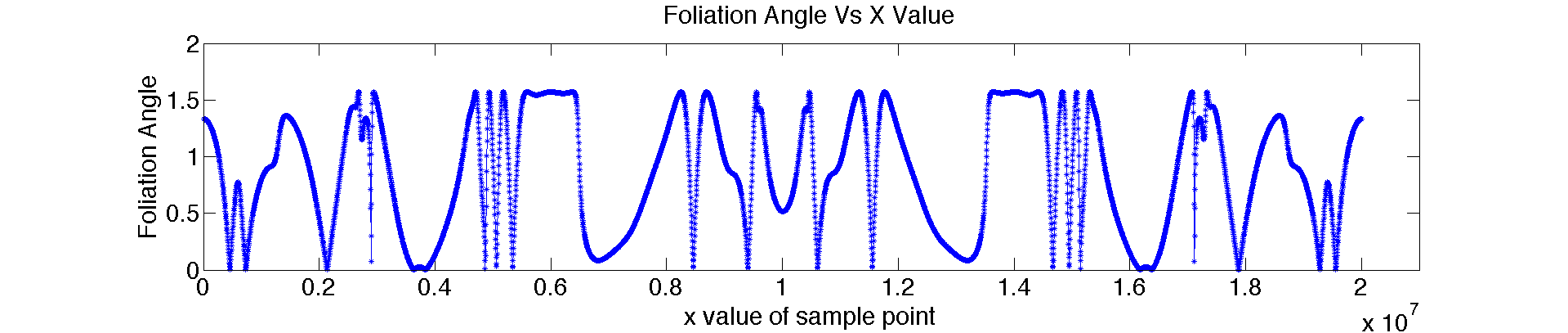}}  \\
   \subfloat[Foliations in the blue rectangular]{\label{fig:mouse}\includegraphics[width=.6\textwidth]{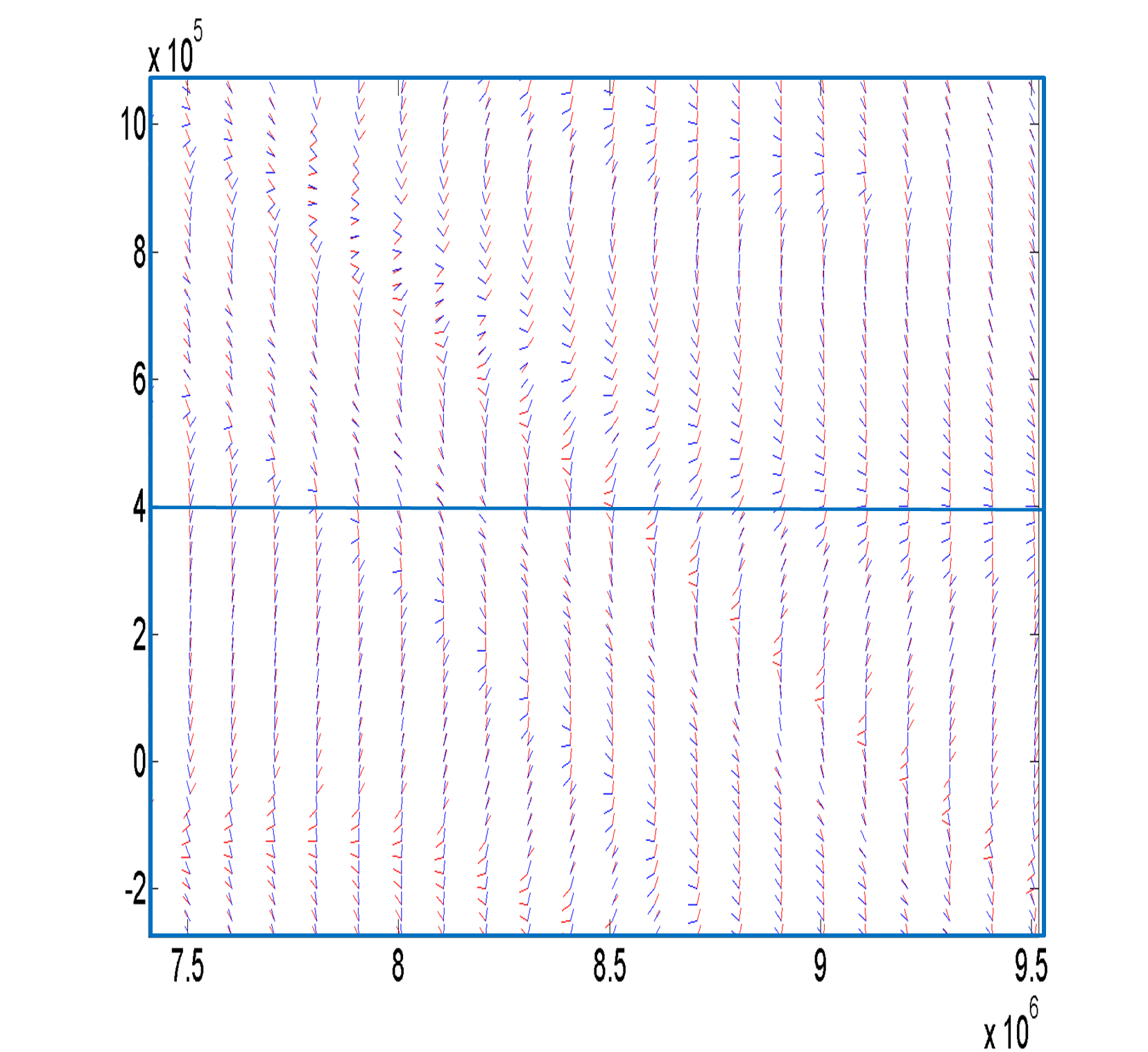}}  \\
   \subfloat[Angles vs $x$ in the small blue region]{\label{fig:mouse}\includegraphics[width=.6\textwidth]{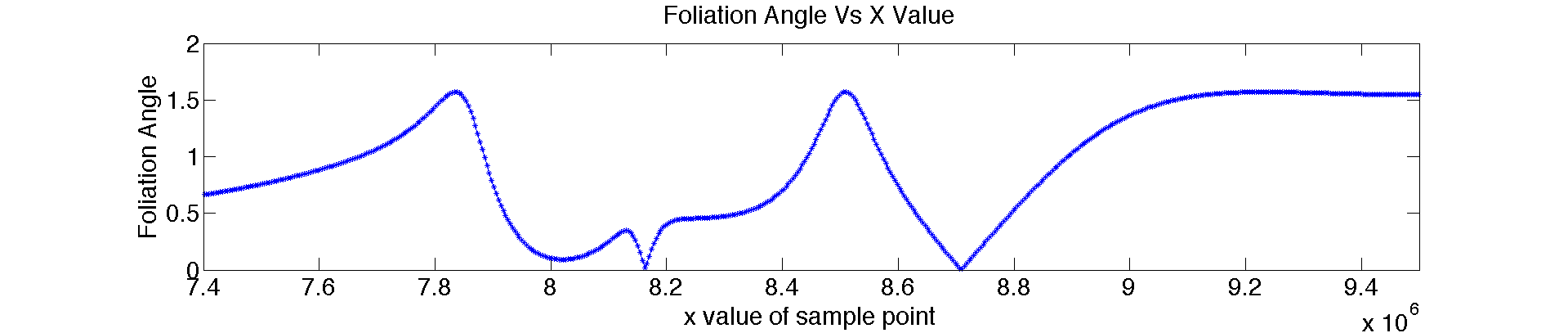}}

  \caption{(a) is the stable and unstable foliations of Rossby wave at time $t=0$, the wavelike stuctures result from the differences among angles; 
(b) is the plot of foliations angles vs $x$ value for a fixed $y$; 
(c) is how foliations look like in  the small blue area of the whole domain; 
(d) is the angle changes with $x$ for a fixed $y$ for the small blue region.}
  \label{RW1}
\end{figure*}

\begin{figure*}%[htb!]       
  \centering            
   \subfloat{\label{fig:tiger}\includegraphics[width=.85 \textwidth]{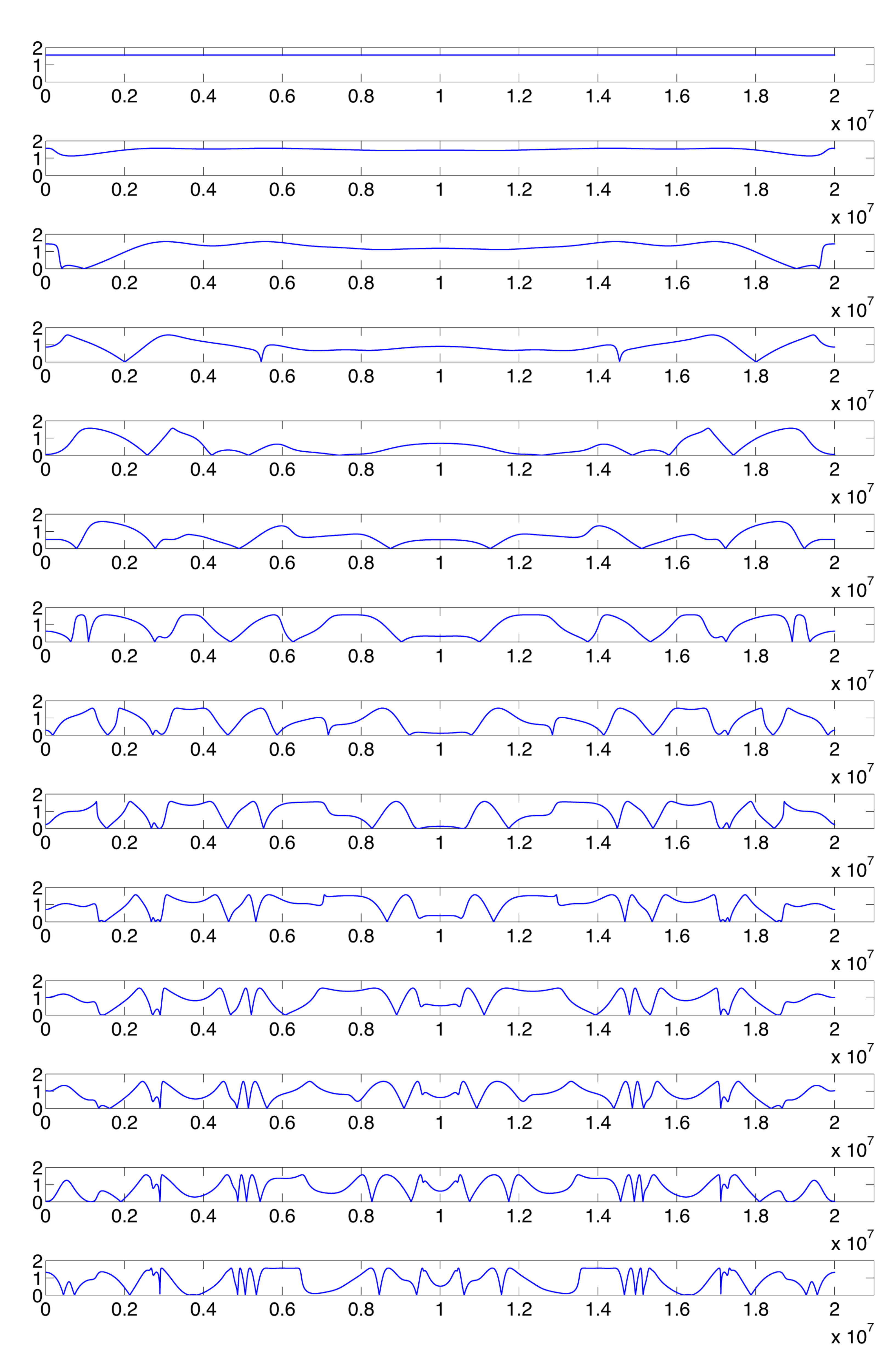}}  
  \caption{For $y=-1 \times 10^{6}$, it shows the angles between foliations for different uniformly spaced times from $T=0$ to $T=\pm 3 \ days$ versus the $x-$ coordinates of Rossby Wave. At the beginning, all the angles are equal 90 degree. Then the angles on the two sides become smaller, and more small angles keep emerging for longer time. At last, we get the figure which is in Fig. \ref{RW1} (b). }
  \label{RWAngles}
\end{figure*}

%
%%%%%%%%%%%%%%%%%%%%%%%%%%%%%%%%%%%%%%%%%%%%%%%%%%%%%%%%%%%%%%%%%%%%%%%%%%%%%%%%%%
%%                                                                                                                     Example 2
%%%%%%%%%%%%%%%%%%%%%%%%%%%%%%%%%%%%%%%%%%%%%%%%%%%%%%%%%%%%%%%%%%%%%%%%%%%%%%%%%%

\subsection{The Nonautonomous Double Gyre.}
Consider the nonautonomous double gyre system,
\begin{eqnarray}
&& \dot{x}=- \pi A \sin ( \pi f(x,t)) \cos(\pi y) \nonumber \\
&&\dot{y}=\pi A \cos ( \pi f(x,t)) \sin(\pi y) \frac{df}{dx}
\end{eqnarray}
where $f(x,t)= \epsilon \sin(\omega t) x^2 + (1-2 \epsilon \sin (\omega t)) x$, $\epsilon = 0.1$, $\omega=2 \pi/10$ and $A=0.1$.
See \cite{SLM,FK}. 
Let the initial time be $t_0=0$ and the time epoch to build the folations is $T=10.$
For double gyre, we focus on the main partition and the closed curves. 

We calculate the finite-time stable and unstable  foliations 
on a uniform $2000 \times 200$ grid of the domain $[0, 2] \times [0, 1]$. 
See Fig. \ref{DG1} (a) shows the zero-splitting curves at time $T=-10$ and  Fig. \ref{DG1} (b) shows the same curves evolved $2T$ to time $T=10$.
Investigating finer scaled structures, 
Fig. \ref{DG1} (c) and (d) describe two small different regions in the double gyre system. 
In particular, those zero-splitting points $z$ with angle functions $\theta(z,T)$ with large variation (with respect to $z$), which change quickly between $0$ and $\pi /2$ in (d) show 
the well-known fast mixing middle region of double gyre.  These points are similar to those seen in the Rossby system in Fig.~\ref{RWAngles}.

\begin{figure*}%[htb!]
  \centering

  \subfloat[Foliations of whole domain of Double Gyre]{\label{fig:gull}\includegraphics[width=1\textwidth]{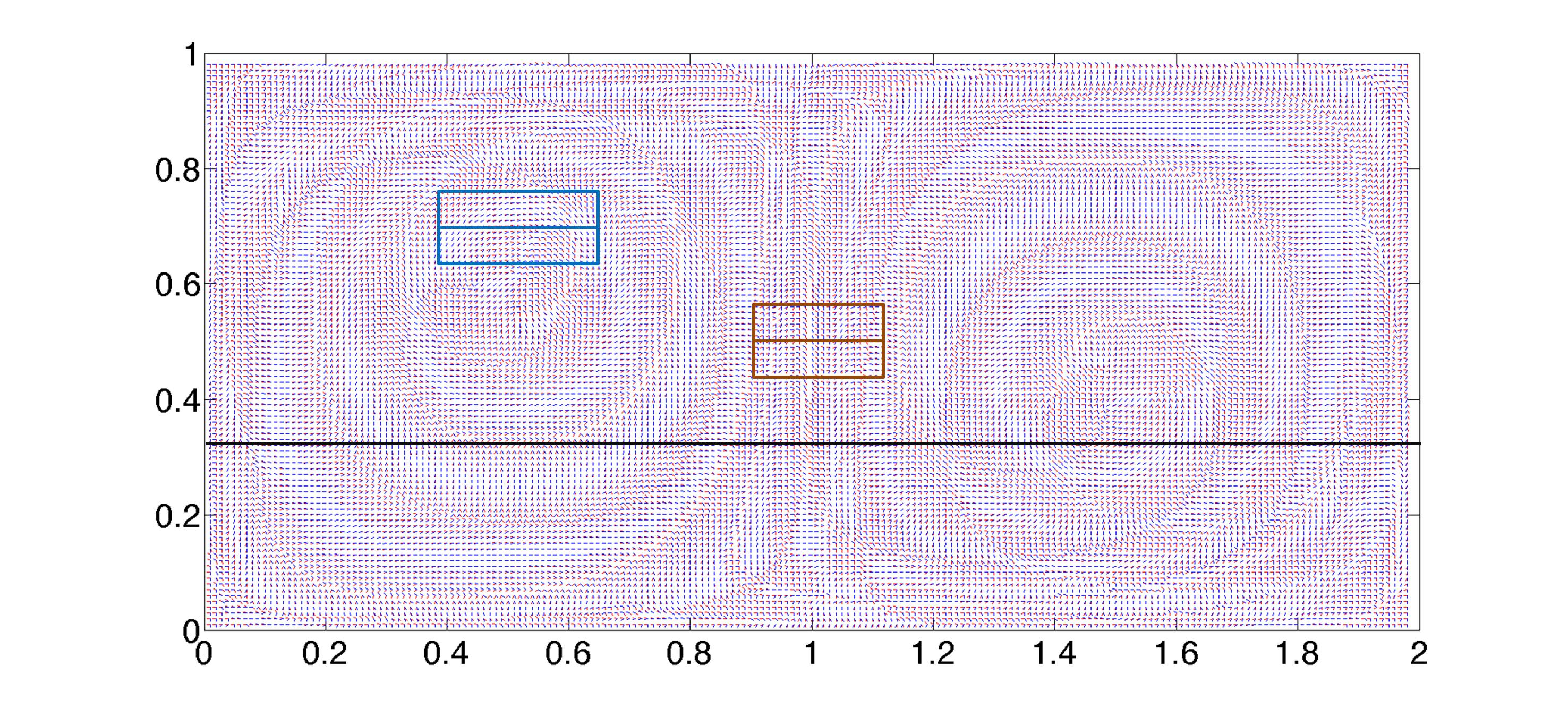}}    \\    
  \subfloat[Foliations' Angle on the Black Line]{\label{fig:gull}\includegraphics[width=1\textwidth]{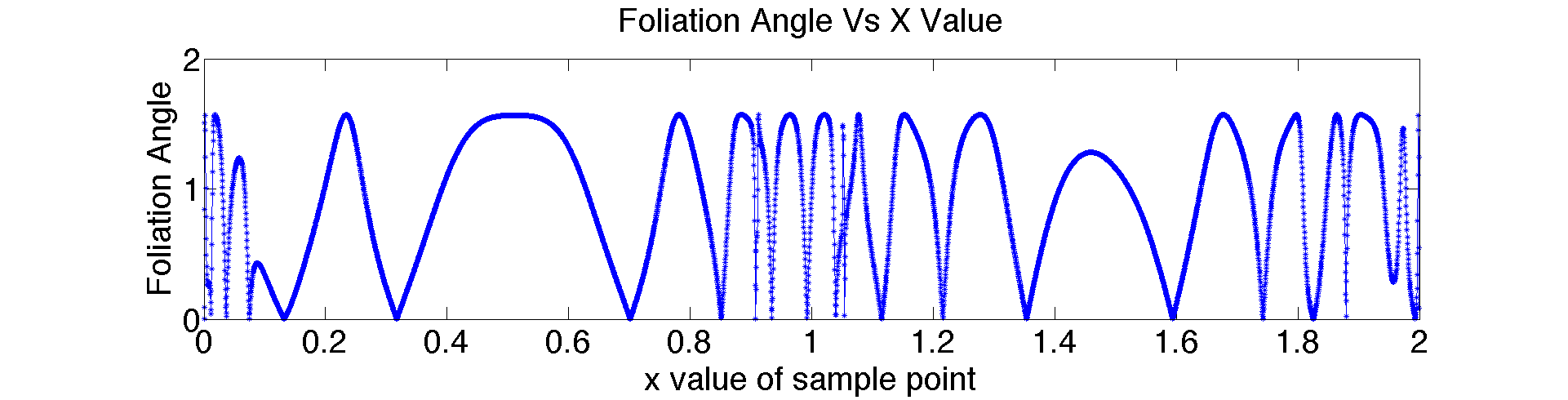}}      \\       
  \subfloat[Foliations in the blue rectangular]{\label{fig:tiger}\includegraphics[width=.5\textwidth]{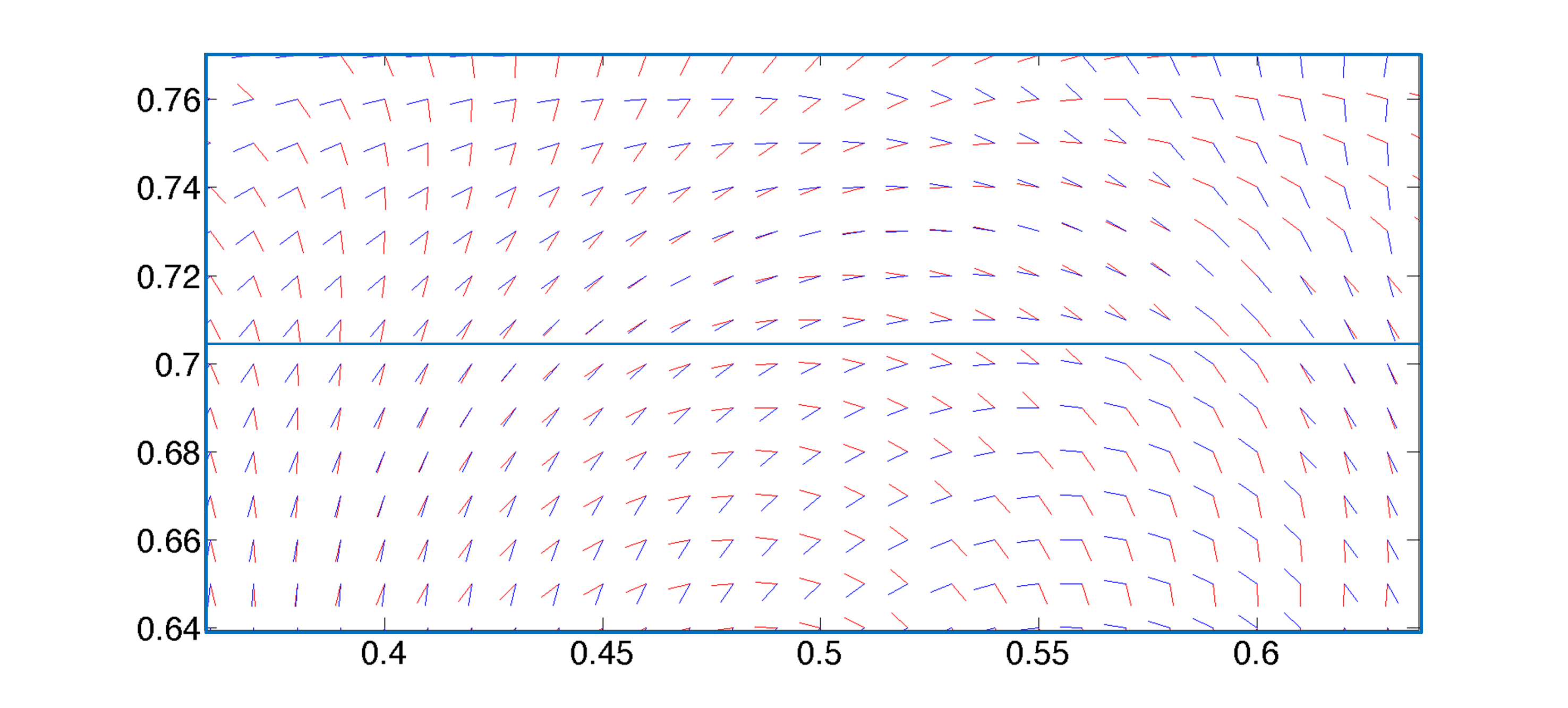}} 
  \subfloat[Foliations in the brown rectangular]{\label{fig:tiger}\includegraphics[width=.5\textwidth]{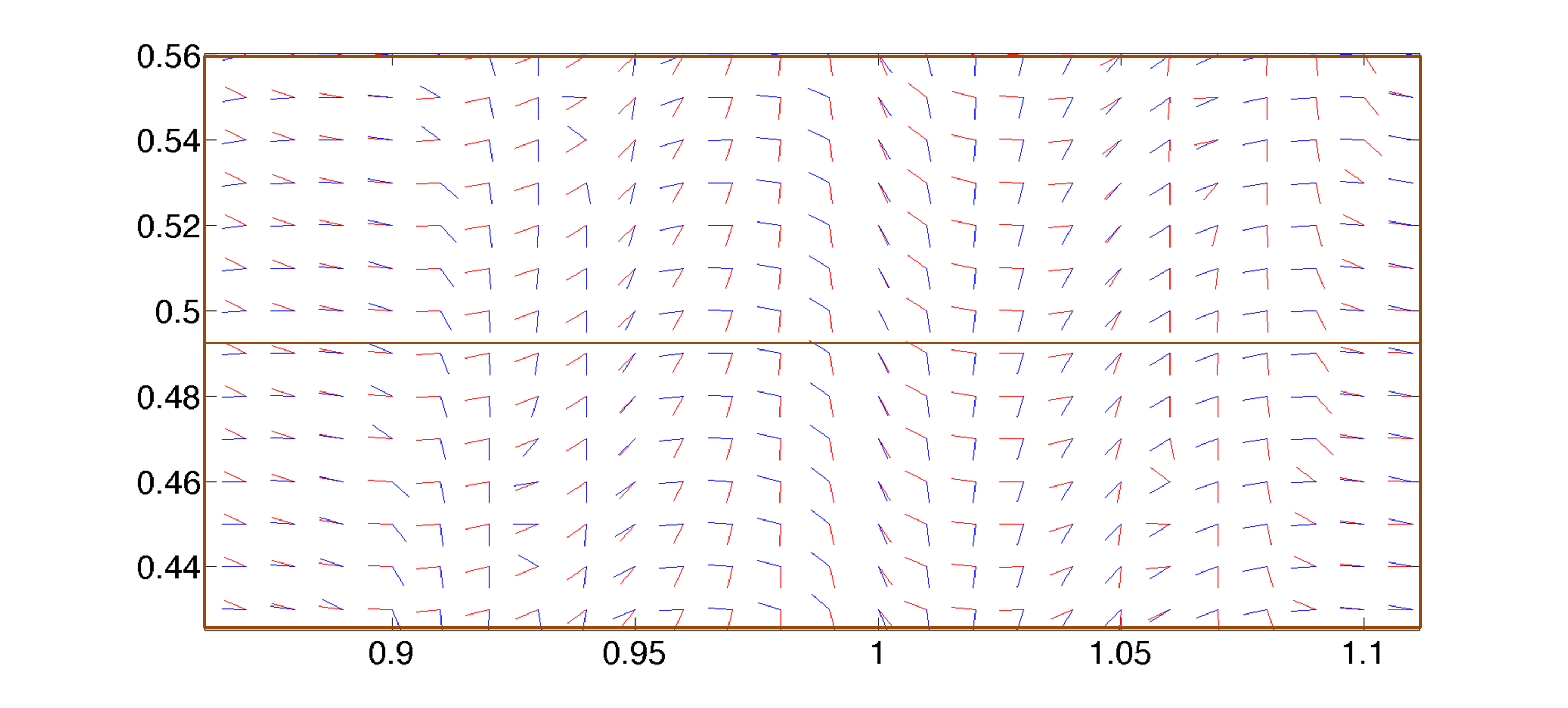}} \\
  \subfloat[Angles in the blue line]{\label{fig:tiger}\includegraphics[width=.5\textwidth]{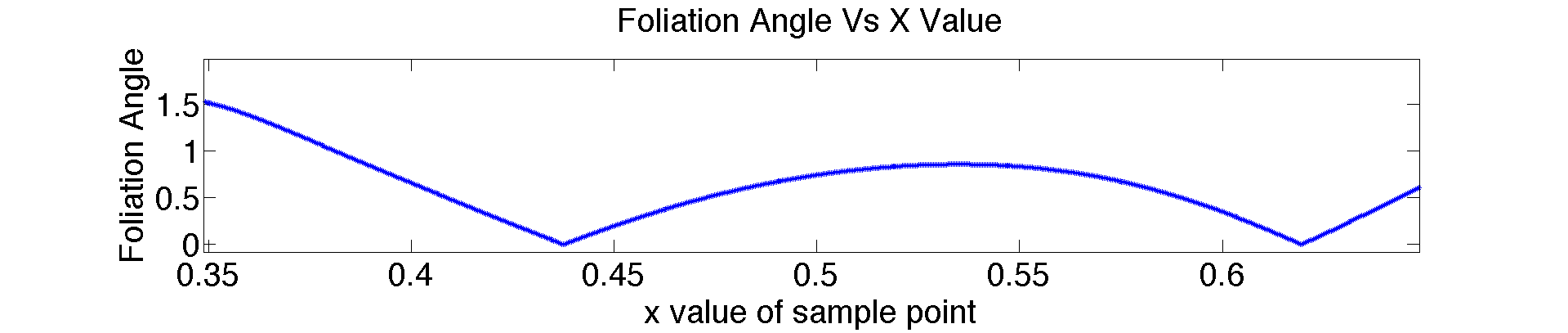}} 
  \subfloat[Angles in the brown line]{\label{fig:tiger}\includegraphics[width=.5\textwidth]{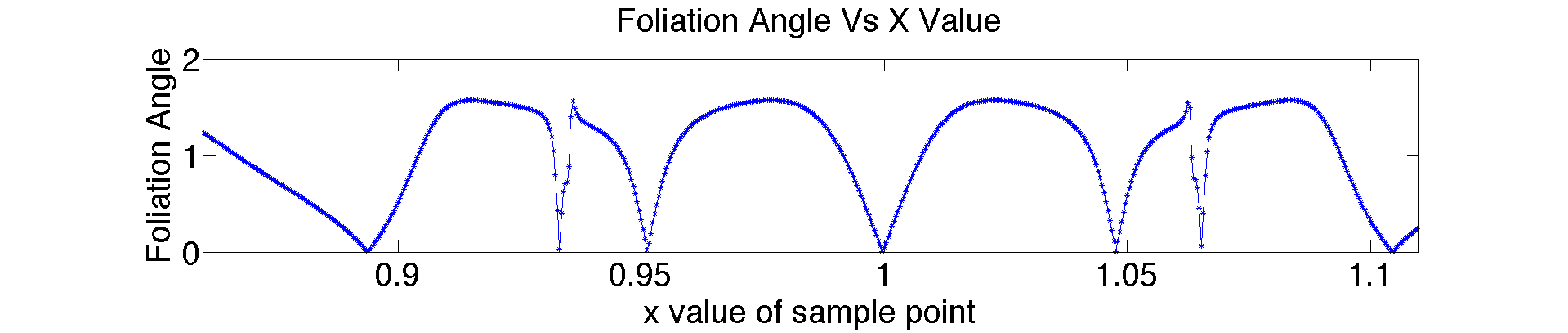}}  

  \caption{}
  \label{DG1}
\end{figure*}

\begin{figure*}%[htb!]
  \centering
    
  \subfloat[Main partition at T=-10]{\label{fig:gull}\includegraphics[width=.5\textwidth]{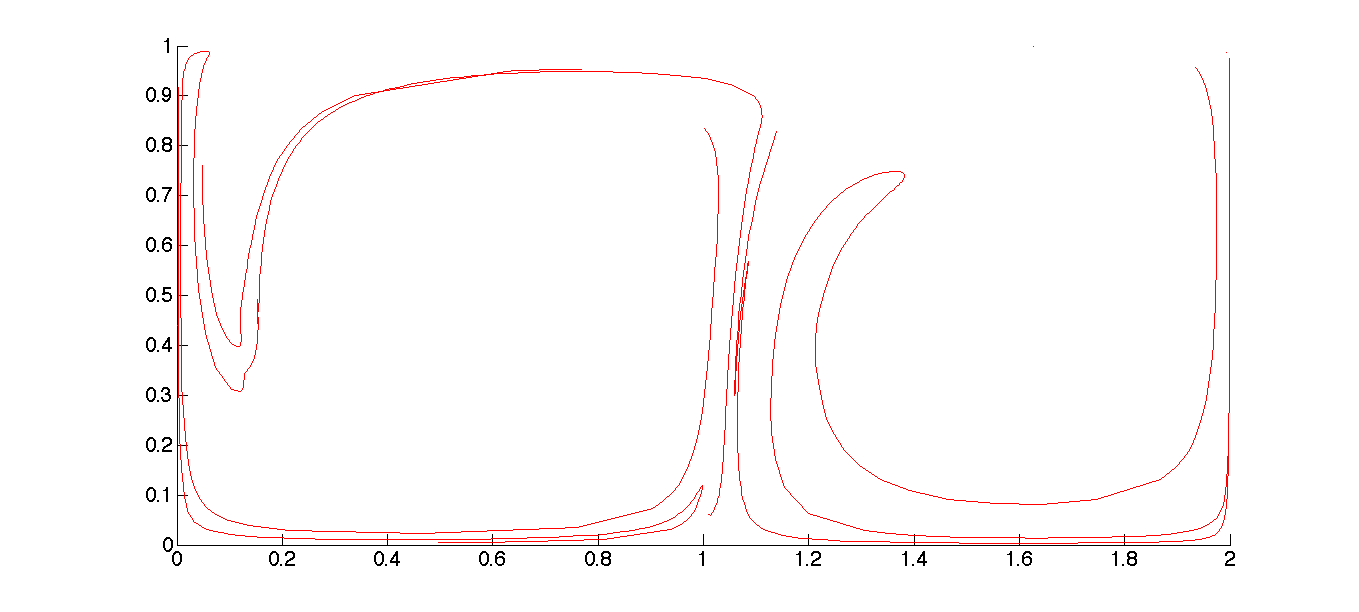}}       
  \subfloat[Main partition at T=10]{\label{fig:tiger}\includegraphics[width=.5\textwidth]{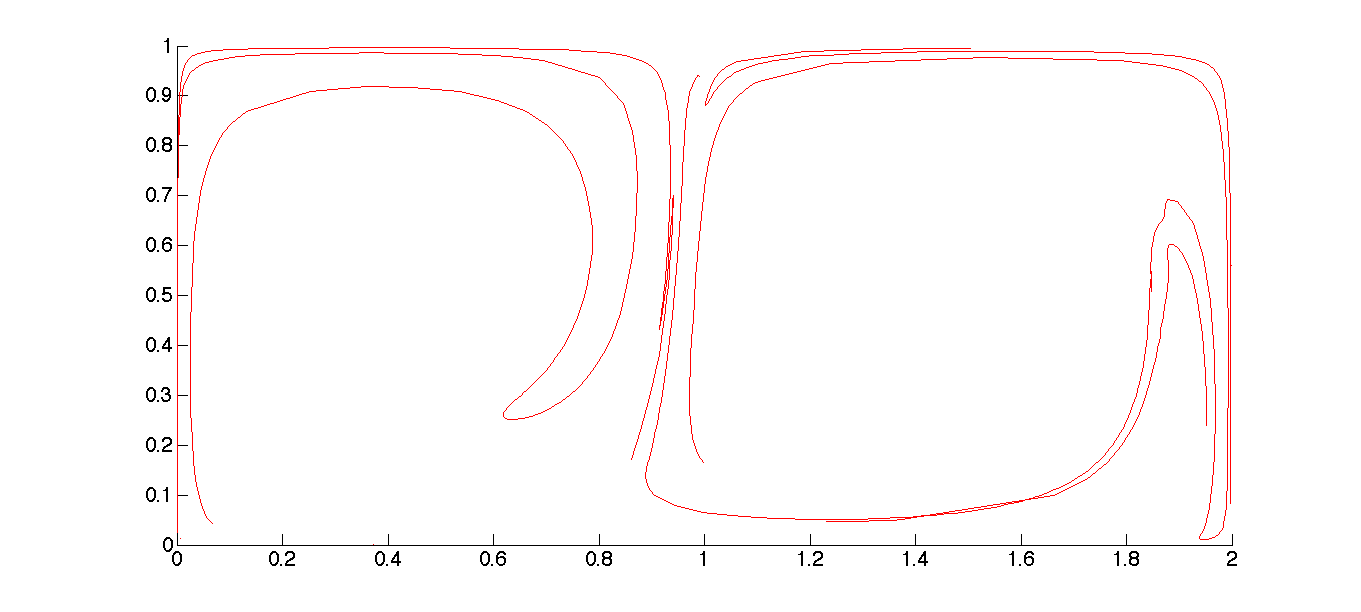}}   \\
  \subfloat[Islands partition at T=-10]{\label{fig:tiger}\includegraphics[width=.5\textwidth]{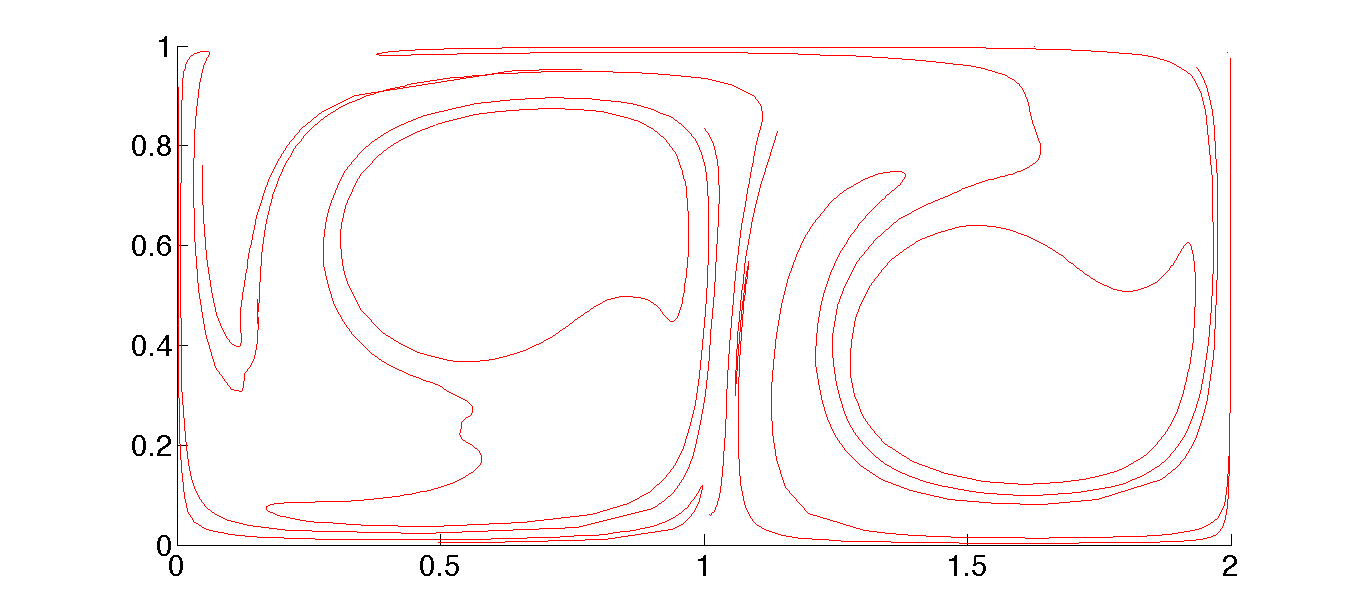} }
  \subfloat[Islands partition at T=10]{\label{fig:tiger}\includegraphics[width=.5\textwidth]{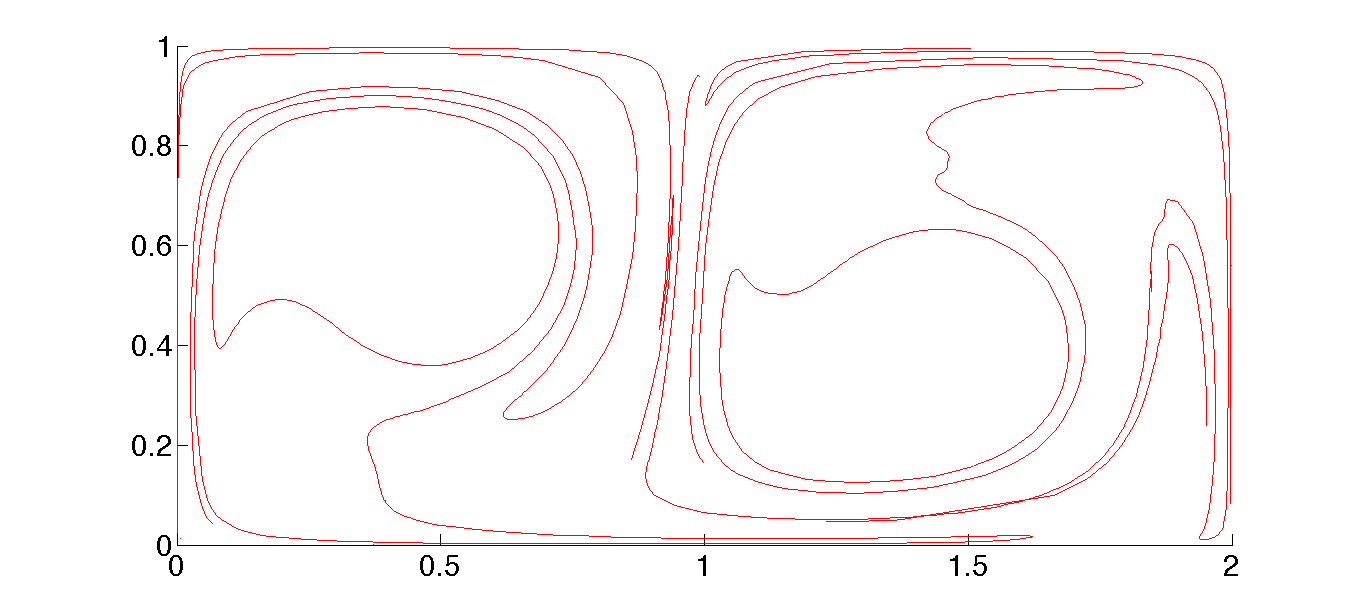}}    \\

  \caption{}
  \label{DG2}
\end{figure*}

%%%%%%%%%%%%%%%%%%%%%%%%%%%%%%%%%%%%%%%%%%%%%%%%%%%%%%%%%%%%%%%%%%%%%%%%%%%%%%%%%
%%                                                                                                                     Conclusions
%%%%%%%%%%%%%%%%%%%%%%%%%%%%%%%%%%%%%%%%%%%%%%%%%%%%%%%%%%%%%%%%%%%%%%%%%%%%%%%%%%
%
\section{Conclusions}
Here we have defined a mathematically precise concept of coherence, called ``shape coherent sets," to emphasize the intuitive idea of sets that hold together in a flow, and roughly catch our eyes.  With respect to this definition, it follows that a flow that is locally approximately the same as a simpler rigid body motion is shape coherent.  Then we show that the theory of differential geometry which holds the concept of congruence between curves allows us to prove that a set whose boundary curvature changes relatively little during the finite epoch of the flow corresponds to a shape coherent set.  Thus through this equivalence, we then investigate what properties of the flow tend to cause curvature to change relatively slowly.  We show that points with tangencies between the finite time stable and unstable foliations tend to have curvature change relatively slowly.  Therefore, we search for curves of such tangency points, which are generally usually typical of shear.  We prove existence of such curves through the implicit function theorem,  which also suggests a constructive algorithm based on solving the ODE from the implicit function theorem, or many other of the typical methods for numerical continuation. Thus we develop shape coherent sets in two benchmark examples, the double gyre, and the Rossby wave system. Further, we include investigations of the intricate stable and unstable foliations topology in each of these systems, which is typical of finite versions of chaotic systems, where the complex folding of stable and unstable manifolds suggest that finite time stable and unstable manifolds will change rapidly in space.  Therefore the angles function Figs.~\ref{RW1}-\ref{RWAngles} can have highly intricate structure, and we discuss the role of primary roots for primary barriers.

In a broad sense, it might said that the phrase ``coherence" has many different meanings to many different people, some spectral, and some set oriented.  Some interpretations of the phrase allow  for sets that may significantly stretch and fold.  Clearly if a set is specifically highlighted, visually by coloring the set as a partition, and then that set is advected, then the set will appear to hold together.  However, this could be interpreted as a property of a continuous flow, that connected sets remain connected, and so a coloring alone will guide our eye for almost any coloring.  Our definition of shape coherence is not meant to capture all possible perspectives of the popular phrase ``coherence." It does however describe a phenomenon that we believe exists in a wide variety of chaotic and turbulent systems, as a type of simplicity within the otherwise complicated motion.  It is evident that it is not a property present in all possible dynamical systems, taking the Arnold Cat map for example, but it does exist and we find it to be an interesting phenomenon to uncover.  Note that shape coherence as we defined it as both spatial and time scales associated with it.  Time scales obviously describe sets that may hold together for a while before deforming, but also on a spatial scale, a rather simply set may maintain its shape whereas subsets of that set may contain highly turbulent behavior.  To that point, consider as an example for sake of discussion the Great Red Spot of Jupiter which is known to be quite complex within the great storm which maintains its approximate shape for long times.  We have shown in detail two popular examples, and we have found it to exist in many more examples including periodic and aperiodic non-autonomous flows.
Finally, we remark that fundamentally the study of coherence and specifically shape coherent sets here, is in some sense the complement to study transport, and mixing which occurs in the hyperbolic-like mixing sets.  These are fundamentally complementary questions on complement sets.

%**need to put in the thifault and Ottino references.

%%%%%%%%%%%%%%%%%%%%%%%%%%%%%%%%%%%%%%%%%%%%%%%%%%%%%%%%%%%%%%%%%%%%%%%%%%%%%%%%%
%                                                                                                                     Proof of Theorems
%%%%%%%%%%%%%%%%%%%%%%%%%%%%%%%%%%%%%%%%%%%%%%%%%%%%%%%%%%%%%%%%%%%%%%%%%%%%%%%%%
%
\section{Appendix 1: Proof of Theorems \ref{prop1}-\ref{3.6}}\label{appendixproofs}

In this section, we give  proofs of Theorems \ref{prop1} and  \ref{3.6}, which we restate here for convenience.

\begin{theorem}\label{prop1-10}(\ref{prop1})
 Given two curvature functions, $\kappa_1(s)$ and $\kappa_2(s)$ of two closed curves $\gamma_1(s)$ and $\gamma_2(s)$ with the same arc length, if $\sup_{s}|\kappa_1(s)-\kappa_2(s)| < \varepsilon$,  
then there exists, 
 \begin{equation}
\delta( \varepsilon)= s^2 \varepsilon ( \|  C_1 \|_2 + \|  C_2 \|_2 ) >0,
\end{equation}
where $C_1$ and $C_2$ are the constant vectors in Eq.~(\ref{GeneralSolution}) and $s$ is the arc length from initial point of the two lined-up curves,
such that  
\begin{equation}
\|\gamma_1(s)-\gamma_2(s) \|_2<\delta(\epsilon).
\end{equation}

\end{theorem}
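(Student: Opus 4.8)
The plan is to exploit the explicit integral reconstruction of a planar curve from its curvature, Eqs.~(\ref{GeneralSolution})--(\ref{CurveEqn}), and to propagate the pointwise control $\sup_s|\kappa_1-\kappa_2|<\varepsilon$ first through the accumulated turning angle, then through the $1$-Lipschitz maps $\sin$ and $\cos$, and finally through a single integration in the arc-length variable $s$. Because the planar Frenet system Eq.~(\ref{FS}) is solved in closed form, no Gr\"onwall-type argument is needed; the estimate is direct.

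First I would make precise what the \emph{lined-up} hypothesis buys us. After aligning the parameterizations as in Eqs.~(\ref{linedup1})--(\ref{linedup2}), the two curves are arranged to share a common initial point and a common initial Frenet frame at $s=0$. Evaluating Eq.~(\ref{GeneralSolution}) at $s=0$, where the accumulated angle vanishes, gives $\mathbf{N}(0)=C_1$ and $\mathbf{T}(0)=-C_2$; hence the shared initial frame forces the constant vectors $C_1,C_2$ to be common to both curves, and likewise $\gamma_1(0)=\gamma_2(0)=\gamma_0$. Consequently the \emph{only} difference between $\mathbf{T}_1(s)$ and $\mathbf{T}_2(s)$ sits in the phase $\Theta_i(s):=\int_0^s\kappa_i(\sigma)\,d\sigma$, whose difference I would bound by
\[
|\Theta_1(s)-\Theta_2(s)|\le\int_0^s|\kappa_1(\sigma)-\kappa_2(\sigma)|\,d\sigma\le s\varepsilon .
\]

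Next I would estimate the tangent vectors. Writing Eq.~(\ref{GeneralSolution}) for each curve, subtracting, and using that $\sin$ and $\cos$ are $1$-Lipschitz together with the triangle inequality,
\[
\|\mathbf{T}_1(s)-\mathbf{T}_2(s)\|_2\le(\|C_1\|_2+\|C_2\|_2)\,|\Theta_1(s)-\Theta_2(s)|\le(\|C_1\|_2+\|C_2\|_2)\,s\varepsilon .
\]
Finally I would integrate this tangent bound. By Eq.~(\ref{CurveEqn}) and the common base point $\gamma_0$,
\[
\|\gamma_1(s)-\gamma_2(s)\|_2=\Big\|\int_0^s(\mathbf{T}_1(\sigma)-\mathbf{T}_2(\sigma))\,d\sigma\Big\|_2\le\int_0^s(\|C_1\|_2+\|C_2\|_2)\,\sigma\varepsilon\,d\sigma=\tfrac{s^2}{2}(\|C_1\|_2+\|C_2\|_2)\varepsilon<\delta(\varepsilon),
\]
so the claimed bound holds, in fact with a factor of $2$ to spare, which is comfortably absorbed by the strict inequality in the statement.

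The only genuinely delicate point is the second step: one must verify that the lined-up alignment truly yields a shared initial frame, since otherwise $C_1$ and $C_2$ would differ between the two curves and the clean cancellation—leaving the curvature discrepancy to appear solely inside the $\sin/\cos$ phases—would be lost. Everything after that is the $1$-Lipschitz estimate for $\sin$ and $\cos$ and one integration, so I expect the alignment bookkeeping, rather than any analytic obstruction, to be where care is required.
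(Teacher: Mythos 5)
Your proposal is correct and follows essentially the same route as the paper: line up the curves so that $C_1$, $C_2$ and the base point are shared, bound the accumulated turning-angle difference by $s\varepsilon$, pass this through $\sin$ and $\cos$, and integrate once in $s$ (the paper implements the Lipschitz step via the sum-to-product identities plus $|\sin p|\le p$ rather than invoking $1$-Lipschitzness directly). Your bookkeeping of the inner integration variable is in fact slightly cleaner and yields the bound $\tfrac{s^2}{2}\varepsilon(\|C_1\|_2+\|C_2\|_2)$, a factor of $2$ sharper than the paper's $\delta(\varepsilon)$, so the stated inequality follows a fortiori.
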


%\color{blue} 
%\begin{figure*}%[htb!]
%  \centering
%  \subfloat[Two curves with similar curvatures and same arc length.]{\label{fig:gull}\includegraphics[width=.6\textwidth]{}} 
%  \qquad
%  \qquad
%  \subfloat[Two curves that have been rectified (lined-up).]{\label{fig:gull}\includegraphics[width=.2\textwidth]{CurvatureProp}} 
%  \caption{$\hat{\gamma}(s)$ and $\tilde{\gamma}(s)$ are two given curves to be compared, which are red and green. (a) is a example show how curves are lined up under the condition Eq. (\ref{Convolution1}) holds. In (b), we line up the two curves by letting the two curve have same tangent and normal unit vectors at $s=0$, notice that the curvature may vary. }
%  \label{CurvatureProp}
%\end{figure*}

\begin{proof}
Consider comparing the two curves pointwise.  While here we assume that the arc length of the two curves are the same; we will discuss the generalized case of differening arc lengths following.  
%Having assumed $J(\kappa_1,\kappa_2)$ holds, 
%then for a point $\gamma_1(s)$ on the curve $\gamma_1$, 
%the point $\gamma_2(s)$ have a relatively similar curvature at $\gamma_2$. 
We have assumed that the curvatures are as suggested in Fig.  \ref{CurvatureProp}.
We will argue 
that distance between the two points can be controlled by their curvatures. 

%By Eq. (\ref{GeneralSolution}) and Eq. (\ref{CurveEqn}), 
%we know that the curves $\gamma_1(s)$ and $\gamma_2(s)$ have the similar equations but may be with different constant coefficients and initial conditions.
%%Notice that the constants and domains for these equations need not be the same, 
%so we cannot calculate $\| \gamma_1(s)-\gamma_2(s) \|_2$ pointwise.
%To solve this problem, we set the same directions for tangent and normal vetors of both curves' beginning points which are developed by Eq. (\ref{Convolution1}), where the arc length is $0$. 
%That is, the same initial condition allow we use the same constants for the two curves.
%Notice that we can set arbitrary two $2-D$ curves have the same tangent and normal vectors at a given point.

Without loss of generality and for convenience of the estimates of comparison, assume that the two curves intersect at $s=0$, $\gamma_1(0)=\gamma_2(0)$, and furthermore at $s=0$ that the tangent and normal vectors are each parallel, as depicted in Fig.  \ref{CurvatureProp}.  The reason this statement can be made without loss of generality is that the Frenet-Seret formula states that a curve can be reconstructed up to initial conditions, which is the reasoning behind the definition of curve congruence that curves are called the same if they are indistinguishable up to a rigid body motion.  Therefore a rigid body motion of $\gamma_2(s)$ can be used to orient the tangent and normal at $s=0$ with those of $\gamma_1(s)$, also at $s=0$, as depicted in Fig.  \ref{CurvatureProp}. This defines constant vectors $C_1$ and $C_2$.  Note this is not a unique orientation for matching, but sufficient and convenient.  

We denote the constants as $C_1$ and $C_2$, rather than two different pairs.
So we have, 
\begin{eqnarray}\label{GeneralSolutionForTwoCurvesOneConstants}
\gamma_1(s) = \int_{0}^{s} \left ( C_1 \sin \int_{0}^{s} \kappa_1 (\sigma) d \sigma -C_2  \cos \int_{0}^{s} \kappa_1 (\sigma) d \sigma \right) d \sigma   \\
\gamma_2(s) = \int_{0}^{s} \left ( C_1 \sin \int_{0}^{s} \kappa_2 (\sigma) d \sigma -C_2 \cos \int_{0}^{s} \kappa_2 (\sigma) d \sigma \right) d \sigma
\end{eqnarray}
%Notice that we still need to unify the domain of the integrals, so let $\hat{s}_i=v_i \tilde{s}_i$ where $v_i$ can be considered as the stretching rate between the two short dash line curves.
Then the distance between the two points $\gamma_1(s_a)$ and $\gamma_2(s_b)$ on the different curves is,
\begin{eqnarray}
\|\gamma_1(s)-\gamma_2(s)\|_2
&=&\bigg \| \int_{0}^{s} \left ( C_1 \sin \int_{0}^{s} \kappa_1 (\sigma) d \sigma -C_2  \cos \int_{0}^{s} \kappa_1 (\sigma) d \sigma \right) d \sigma   \nonumber \\
&-&  \int_{0}^{s} \left ( C_1 \sin \int_{0}^{s} \kappa_2(\sigma) d \sigma -C_2 \cos \int_{0}^{s} \kappa_2 (\sigma) d \sigma \right) d \sigma \bigg \|_2 \nonumber \\
&=& \bigg \| \int_{0}^{s} \left ( C_1  \left (  \sin \int_{0}^{s} \kappa_1 (\sigma) d \sigma - \sin \int_{0}^{s} \kappa_2 (\sigma) d \sigma \right) \right) d \sigma  \nonumber \\
&+& \int_{0}^{s} \left ( C_2  \left (  \cos \int_{0}^{s} \kappa_2 (\sigma) d \sigma - \cos \int_{0}^{s} \kappa_1 (\sigma) d \sigma \right) \right) d \sigma  \bigg \|_2  \nonumber \\
&=& \bigg \| \int_{0}^{s} \left ( 2C_1 \cos \left ( \int_{0}^{s} \frac{\kappa_1 (\sigma) +\kappa_2(\sigma) }{2} d \sigma \right ) \sin \left ( \int_{0}^{s} \frac{\kappa_1 (\sigma)  -\kappa_2 (\sigma) }{2} d \sigma \right ) \right) d \sigma  \nonumber \\
&+& \int_{0}^{s} \left ( 2C_2 \sin \left ( \int_{0}^{s} \frac{\kappa_2 (\sigma) +\kappa_1 (\sigma) }{2} d \sigma \right ) \sin \left ( \int_{0}^{s} \frac{\kappa_1 (\sigma)  -\kappa_2 (\sigma) }{2} d \sigma \right ) \right) d \sigma \bigg \|_2  \nonumber \\
&=& \bigg \| \int_{0}^{s} 2 \left ( C_1 \cos \left ( \int_{0}^{s} \frac{\kappa_1 (\sigma) +\kappa_2 (\sigma) }{2} d \sigma \right ) + C_2 \sin \left ( \int_{0}^{s} \frac{\kappa_2 (\sigma) +\kappa_1 (\sigma) }{2} d \sigma \right )  \right) \nonumber \\
&\times& \sin \left ( \int_{0}^{s} \frac{\kappa_1 (\sigma)  -\kappa_2(\sigma) }{2} d \sigma \right ) d \sigma \bigg \|_2  \nonumber \\
&\le&  \int_{0}^{s} 2 \left ( (\|C_1\|_2+\|C_2\|_2) \bigg |  \sin \left ( \int_{0}^{s} \frac{\kappa_1 (\sigma)  -\kappa_2 (\sigma) }{2} d \sigma \right ) \bigg | \right) d \sigma
\end{eqnarray}
By the assumed condition $\sup_{s}|\kappa_1(s)-\kappa_2(s)| < \varepsilon$, we have at least for $\epsilon<\frac{\pi}{2}$,

\begin{eqnarray}\label{4thTrans}
\|\gamma_1(s)-\gamma_2(s)\|_2  
&<&  \int_{0}^{s} 2 \left ( (\|C_1\|_2+\|C_2\|_2) \bigg |  \sin \left ( \int_{0}^{s} \frac{\varepsilon}{2} d \sigma \right ) \bigg | \right) d \sigma \nonumber \\
&=&\int_{0}^{s} 2 \left ( \|  C_1 \|_2 + \|  C_2 \|_2 \right )  | \sin \left ( \frac{s \varepsilon}{2} \right)  |  d\sigma \nonumber \\
&=&  2s \left ( \|  C_1 \|_2 + \|  C_2 \|_2 \right )  |  \sin \left ( \frac{s \varepsilon}{2} \right)  | \leq s^2 \varepsilon ( \|  C_1 \|_2 + \|  C_2 \|_2 ) .
\end{eqnarray}
the last inequality following from the fact, $\sin(p)\leq p$ for all $0\leq p$.
Hence we may choose, $\delta( \varepsilon)=s^2 \varepsilon ( \|  C_1 \|_2 + \|  C_2 \|_2 )$.
Note that $s$ is the arc length from the beginning points to compared points, and recall that by assumptions that $\epsilon>0$. 
\end{proof}

\begin{theorem}\label{xx3.6}
(\ref{3.6})
%{\color{red}the version with one overlap region}
For two closed curves $\gamma_1(s)=(x_1(s), y_1(s))$ and $\gamma_2(s)=(x_2(s), y_2(s)) (0\le s < 2\pi)$ which are boundaries of sets $A_1$ and $A_2$.   See Fig.~\ref{AreaIntersection}.  Let the boundaries of $A=A_1 \cap A_2$
$A=A_1 \cap A_2\neq \emptyset$ 
such that $area(A)>0$ 
be $\gamma(s)=(x(s),y(s))$, and,
%If $s^*=\mod (a-\tilde{s}, 1)$ for $J(\kappa_1, \kappa_2)$ as in Eq. (\ref{Convolution1}) holds, and let
%$\sup_{\hat{s}, \tilde{s}}|\hat{\kappa}(\hat{s})-\tilde{\kappa}(\tilde{s})| < \delta$
%for a $\delta >0$, then $\exists \  0<\Delta<1$ s.t. 
%$\alpha(\hat{A}, \tilde{A},0)>1-\Delta$
\begin{eqnarray}\label{1stShapeCoherent}
\varepsilon= \max\{|x-x_2|,|y-y_2|,|x'-x_2'|,|y'-y_2'| \} \nonumber \\
M= 2\max\{ |x|,|x_2|,|y|,|y_2|,|x'|,|x_2'|,|y'|,|y_2'|\},
\end{eqnarray}
then there exist a $\Delta(\epsilon)$, which is defined as, 
\begin{eqnarray}\label{2stShapeCoherent}
\Delta(\epsilon)=\frac{2\pi M \varepsilon }{Area(A_2)}
\end{eqnarray}
such that 
\begin{eqnarray}\label{3stShapeCoherent}
1\geq \alpha(A_1, A_2, 0)\ge1-\Delta(\epsilon)
\end{eqnarray}
%where $v$ is a scalar related the the parameterization of the two curves and $l$ is the arc length of $\gamma(s)$.
\end{theorem}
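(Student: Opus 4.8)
The plan is to exploit the fact that the shape coherence factor is evaluated at $T=0$, so the flow $\Phi_0$ is the identity map; hence $\alpha(A_1,A_2,0)=\sup_{S(A_2)} m(S(A_2)\cap A_1)/m(A_2)$, and since the supremum ranges over all rigid body motions, choosing $S$ to be the identity already yields the lower bound $\alpha(A_1,A_2,0)\ge m(A_1\cap A_2)/m(A_2)=area(A)/Area(A_2)$. The upper bound $\alpha\le 1$ is immediate, since rigid motions preserve Lebesgue measure, so $m(S(A_2)\cap A_1)\le m(S(A_2))=m(A_2)$ for every admissible $S$. Thus the entire theorem reduces to showing $area(A)/Area(A_2)\ge 1-\Delta(\epsilon)$, equivalently $Area(A_2)-area(A)\le 2\pi M\varepsilon$.

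Next I would convert both areas into contour integrals via Green's theorem (the shoelace area formula): $Area(A_2)=\tfrac12\int_0^{2\pi}(x_2 y_2'-y_2 x_2')\,ds$ and $area(A)=\tfrac12\int_0^{2\pi}(x y'-y x')\,ds$, using that $\gamma=(x,y)$ and $\gamma_2=(x_2,y_2)$ are the closed boundary curves of $A$ and $A_2$ parameterized compatibly on $[0,2\pi)$. Since $A=A_1\cap A_2\subseteq A_2$, the difference $Area(A_2)-area(A)$ equals $\tfrac12\int_0^{2\pi}[(x_2 y_2'-y_2 x_2')-(x y'-y x')]\,ds$, a single line integral whose integrand I would algebraically reorganize into four pieces, each a product of one of the differences $x_2-x$, $y_2'-y'$, $y-y_2$, $x'-x_2'$ with a bounded coordinate or derivative. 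Concretely, rewriting the integrand as $(x_2-x)y_2'+x(y_2'-y')+(y-y_2)x'+y_2(x'-x_2')$ isolates exactly the quantities controlled by $\varepsilon$.

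From here the estimate is routine: each difference is bounded in absolute value by $\varepsilon$, while each accompanying factor ($y_2'$, $x$, $x'$, $y_2$) is bounded by $M/2$ by the definition of $M$; hence the integrand is bounded pointwise by $4\cdot(M/2)\varepsilon=2M\varepsilon$, and integrating over $[0,2\pi)$ with the prefactor $\tfrac12$ gives $Area(A_2)-area(A)\le \tfrac12\cdot 2M\varepsilon\cdot 2\pi=2\pi M\varepsilon$. Dividing by $Area(A_2)$ and subtracting from $1$ then produces exactly $\alpha(A_1,A_2,0)\ge 1-\Delta(\epsilon)$.

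The main obstacle is not the estimation itself but the two structural assumptions that make the contour-integral argument legitimate: first, that the boundary of $A=A_1\cap A_2$ is a single closed curve admitting the same $[0,2\pi)$ parameterization as $\gamma_2$, so that the two Green's-theorem integrals can be subtracted under one integral sign; and second, that the parameterizations of $\gamma$ and $\gamma_2$ are ``lined up'' in the sense of the earlier discussion, so that the pointwise differences $|x-x_2|$, etc., genuinely measure closeness of the curves rather than a mere parameterization mismatch. I would record these as standing hypotheses (or, for a multi-component intersection boundary, sum the argument over each component), after which the algebraic splitting and the term-by-term bound complete the proof.
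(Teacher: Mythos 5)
Your proposal is correct and follows essentially the same route as the paper's own proof: Green's theorem for both areas, the add-and-subtract decomposition of the integrand difference into four terms each controlled by $\varepsilon$ times a factor bounded by $M/2$, integration over $[0,2\pi)$ to get $2\pi M\varepsilon$, and the lower bound $\alpha \ge m(A)/m(A_2)$ obtained by taking $S$ to be the identity. Your closing caveats about a single-component intersection boundary and lined-up parameterizations are exactly the remarks the paper appends after its proof.
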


%\begin{figure*}%[htb!]
%  \centering
%  \subfloat{\label{fig:gull}\includegraphics[width=.5\textwidth]{AreaIntersection}} 
%  \caption{The red, green and blue closed curves are the boundaries of sets $A_1, A_2$ and $A$.}
%  \label{AreaIntersection}
%\end{figure*}

\begin{proof}
%{\color{red}Maybe need to mention that we change the paramerization coordinate from arc length to general one, since we use the same $s$.}
Suppose that the closed curve $\gamma_1(s)$ and $\gamma_2(s) $ are boundaries of sets $A_1$ and $A_2$, respectively.
Let $A=A_1 \cap A_2$ and $\gamma(s)$ be the boundary of $A$.
See Fig. \ref{AreaIntersection}. 
We annotate  these closed curves  as $\gamma(s)=(x(s),y(s))$,
$\gamma_1(s)=(x_1(s), y_1(s))$ and $\gamma_2(s)=(x_2(s), y_2(s))$.

%where $v$ is a parameter related to the integration by substitution and parameterization.
%By Theorem \ref{CurvatureTheorem1} and the remarks in appendix, 
%it is can be similarly shown  that if $\sup_{\tilde{s}}|\kappa_1(\tilde{s})-\kappa_2(\tilde{s})| \to 0$, 
%then $\varepsilon \to 0$.
%To develop a large value of $\alpha(A_1, A_2,0)$, 
%we start by comparing the area between the intersection $A$ and the sets $A_1$ and $A_2$, respectively.
By Green's theorem, we have,
\begin{eqnarray}\label{2ndShapeCoherent}
Area(A) &=& \frac{1}{2} \int_{\gamma(s)} xdy-ydx=\frac{1}{2} \int^{2\pi}_{0} x(s)y'(s)ds - y(s)x'(s)ds \nonumber \\
Area(A_2) &=& \frac{1}{2} \int_{\gamma_2(s)} x_2dy_2-y_2dx_2
=\frac{1}{2} \int^{2\pi}_{0} x_2(t)y_2'(s)ds-y_2(s)x_2'(s)ds \nonumber \\ 
&=&\frac{1}{2} \int^{2\pi}_{0} x_2(s)y_2'(s)ds - y_2(s)x_2'(s)ds
\end{eqnarray}
where the reason we use $2\pi$ is that we set the origin is inside $A$.
Then we have,
\begin{eqnarray}\label{3rdShapeCoherent}
|Area(A) - Area(A_2)| &=& \bigg | \frac{1}{2} \int^{2\pi}_{0} x(s)y'(s)ds - y(s)x'(s)ds
- \frac{1}{2} \int^{2\pi}_{0} x_2(s)y_2'(s)ds - y_2(s)x_2'(s)ds \bigg | \nonumber \\
&=& \bigg | \frac{1}{2} \int^{2\pi}_{0} \bigg  ( x(s)y'(s)-x_2(s)y_2'(s) \bigg  ) 
+\bigg  (y_2(s)x_2'(s)-y(s)x'(s) \bigg ) ds \bigg | \nonumber \\
&=& \bigg | \frac{1}{2} \int^{2\pi}_{0} \bigg  (x(s)y'(s)-x(s)y_2'(s)+x(s)y_2'(s)-x_2(s)y_2'(s) \bigg )   \nonumber \\
&+& \bigg  (y_2(s)x_2'(s)-y(s)x_2'(s)+y(s)x_2'(s)-y(s)x'(s) \bigg ) ds \bigg | \nonumber \\
&=& \bigg | \frac{1}{2} \int^{2\pi}_{0} \bigg  (x(s)y'(s)-x(s)y_2'(s) \bigg )+ \bigg (x(s)y_2'(s)-x_2(s)y_2'(s) \bigg  )   \nonumber \\
&+& \bigg  (y_2(s)x_2'(s)-y(s)x_2'(s) \bigg )+\bigg (y(s)x_2'(s)-y(s)x'(s) \bigg ) ds \bigg | \nonumber \\
&\le&  \frac{1}{2} \int^{2\pi}_{0} \bigg |x(s)y'(s)-x(s)y_2'(s) \bigg |+ \bigg |x(s)y_2'(s)-x_2(s)y_2'(s) \bigg |  \nonumber \\
&+& \bigg |y_2(s)x_2'(s)-y(s)x_2'(s) \bigg |+\bigg |y(s)x_2'(s)-y(s)x'(s) \bigg | ds  \nonumber \\
&\le&  \frac{1}{2} \int^{2\pi}_{0}  |x(s) | |y'(s)-y_2'(s) |+ |y_2'(s)| |x(s)-x_2(s)|  \nonumber \\
&+& |x_2'(s)| |y(s)-y_2(s)|+ |y(s)||x_2'(s)-x'(s) | ds  \nonumber \\ 
&\le&  \frac{1}{2} \int^{2\pi}_{0} \frac{1}{2} M\varepsilon + \frac{1}{2} M\varepsilon 
+ \frac{1}{2} M\varepsilon+\frac{1}{2} M\varepsilon ds  \nonumber \\ 
&=&  M\varepsilon \int^{2\pi}_{0}  ds  \nonumber \\ 
&=& 2\pi M\varepsilon 
\end{eqnarray}
%where $l(s)$ denotes the arc length of the closed curve $\gamma(s)$. 
On the other hand, from Eq.~(\ref{shapecoherenced}),
{\begin{eqnarray}
\displaystyle \alpha(A_1, A_2,0) = \sup_{S(A_2)} \frac{m( S(A_2) \cap \Phi_{0}(A_1)) }{m(A_2)}
\end{eqnarray}}
and then we have,
{\begin{eqnarray}
\displaystyle \alpha(A_1, A_2,0) \geq  \frac{m( S(A_2) \cap A_1) }{m(A_2)} 
=\frac{m(A)}{m(A_2)}= 1-\frac{|Area(A) - Area(A_2)|}{Area(A_2)}.
\end{eqnarray}}
Let $\Delta(\epsilon)=\frac{2\pi M\varepsilon }{Area(A_2)}$, we have 
{\begin{eqnarray}
\displaystyle \alpha(A_1, A_2,0) =1-\frac{|Area(A) - Area(A_2)|}{Area(A_2)} \ge 1-\Delta(\epsilon).
\end{eqnarray}}
\end{proof}

Note that the simplified details of the above proof assumes that there is a region of overlap between $A_1$ and $A_2$ that has just one connected  component, and we can always assume that the two regions are arranged to have at least one such region.  In the more general case that the two regions overlap in multiple components, then the above proof can be easily adjusted to integrate area across each region, and the statement of the theorem remains the same.  Notice also that we have stated the theorem so that each boundary curve has the same arc length, but in the case of different arc lengths a comparable theorem can be developed, proved by defining a ``speed" $s'=vs$, to show regularity between boundary curves leads to regularity of the shape coherence.

\nocite{*}
\bibliography{aipsamp}                    % Produces the bibliography via BibTeX.
\baselineskip=12pt

\end{document}